\documentclass[11pt]{amsart}
\usepackage{float,url}
\usepackage{amssymb}
\theoremstyle{plain}
\newtheorem{thm}{Theorem}[section]
\newtheorem{lemma}[thm]{Lemma}

\newtheorem{prop}[thm]{Proposition}

\theoremstyle{definition}

\newcommand{\fra}{\mathfrak{a}}

\newcommand{\frg}{\mathfrak{g}}
\newcommand{\frh}{\mathfrak{h}}

\newcommand{\frk}{\mathfrak{k}}

\newcommand{\frp}{\mathfrak{p}}

\newcommand{\frt}{\mathfrak{t}}

\newcommand{\bbZ}{\mathbb{Z}}

\newcommand{\caC}{\mathcal{C}}
\newcommand{\caD}{\mathcal{D}}

\newcommand{\caF}{\mathcal{F}}

\newcommand{\caM}{\mathcal{M}}

\newcommand{\caT}{\mathcal{T}}
\newcommand{\caU}{\mathcal{U}}

\begin{document}

\title[An enhanced Helgason-Johnson bound for $Sp(p, q)$]
{An enhanced Helgason-Johnson bound for $Sp(p, q)$}

\author{Zhan Ying}
\address[Ying]{School of Mathematical Sciences, Soochow University, Suzhou 215006,
P.~R.~China}
\email{zhanying\_1899@163.com}

\author{Chao-Ping Dong}
\address[Dong]{School of Mathematical Sciences, Soochow University, Suzhou 215006,
P.~R.~China}
\email{chaopindong@163.com}

\abstract{Under the assumption that the irreducible unitary representation $\pi$ is infinite dimensional, this paper improves the Helgason-Johnson bound in 1969 for $Sp(p, q)$.}

\endabstract

\subjclass[2010]{Primary 17B20, 05E18}

\keywords{Dirac inequality, Helgason-Johnson bound, spin norm,  u-small $K$-type, Vogan pencil}

\maketitle


\section{Introduction}
Take positive integers $ p\le q$ and let $n:=p+q$. In this paper, we set $G=Sp(p, q)$, which has a maximal compact subgroup $K:= Sp(p)\times Sp(q)$. Let $\theta$ be the Cartan decomposition of $G$ so that $K=G^{\theta}$. Let $T$ be a maximal torus of $K$.  We have Cartan decomposition
$$
\frg_0=\frk_0 + \frp_0
$$
on the Lie algebra level. Let $\frg$ (resp. $\frk, \frt$) be the complexifications of $\frg_0$ (resp. $\frk_0, \frt_0$).

We fix 	once for all that
$$
\Delta^+(\frk, \frt)=\{e_i\pm e_j\mid\mbox{$1\le i<j\le p$ or $p+1\le i<j\le n$}\}\cup\{2e_i\mid 1\le i\le n\}.
$$
The half sum of its elements is
$$
\rho_c=(p,p-1,\dots,1\mid q,q-1,\dots,1).
$$
By a \emph{$K$-type}, we mean an irreducible representation of $K$. We will denote a $K$-type by $E_{\eta}$, where $\eta$ is its highest weight. There are ${n \choose p}$ ways of choosing a positive root system $\Delta^+(\frg, \frt)$ containing $\Delta^+(\frk, \frt)$. Among them, we fix
$$
\Delta^+(\frg, \frt)=\Delta^+(\frk, \frt) \cup \Delta^+(\frp,\frt),
$$
where
$$
\Delta^+(\frp,\frt)=\{e_i\pm e_j\mid 1\le i\le p, p+1\le j\le n\}.
$$
Denote by $\rho$ the half sum of the roots in $\Delta^+(\frg, \frt)$. Then
$$
\rho=(n,n-1,\dots,q+1\mid q,q-1,\dots,1).
$$
In particular, $\|\rho\|=\sqrt{n(n+1)(2n+1)/6}$.

Let $\mathcal{C}$ (resp. $\mathcal{C}_\frg$) be the dominant Weyl chamber corresponding to $\Delta^+(\frk,\frt)$ (resp. $\Delta^+(\frg,\frt)$). Since $\mathcal{C}_\frg\subset \mathcal{C}$, we define
\[W(\frg,\frt)^1=\{w\in W(\frg,\frt)\mid w(\mathcal{C}_\frg)\subset \mathcal{C}\}.\]
In particular, we denote by $\mathcal{C}_w$ the Weyl chamber $w(\mathcal{C}_\frg)$. For $w\in W(\frg,\frt)^1$,
\[w\Delta^+(\frg,\frt)=\Delta^+(\frk,\frt)\cup w\Delta^+(\frp,\frt).\]
Here $w\in W(\frg,\frt)^1$ acts on $(z_1, \dots, z_n)\in\frt^*$ as follows: it selects $p$ entries from $z_1, \dots, z_n$, arranges the selected ones in descending order as $(u_1,\dots,u_p)$, and writes the remaining components as $(v_1,\dots,v_q)$ in their original relative order.
Then
\begin{equation}\label{wgt-1}
	w(z_1,\dots,z_n)=(u_1,\dots,u_p\mid v_1,\dots,v_q).
\end{equation}
Put $\rho^{(w)}:=w\rho$. Denote by $\rho_n^{(w)}$ the half sum of the roots in $w\Delta^+(\frp,\frt)$, then
\[\rho_n^{(w)}=\rho^{(w)}-\rho_c.\]

Given any $\eta\in\mathcal{C}$, we can find $w\in W(\frg, \frt)^1$ such that $\eta+2\rho_c\in \mathcal{C}_{w}$. As stated in \cite{SV}, there exists a unique
vector within $\mathcal{C}_{w}$ which is closest to $\eta+2\rho_c-\rho^{(w)}$. Denote it by $P(\eta+2\rho_c-\rho^{(w)})$. It turns out that it is independent of the choice of $w$. Let us denote it by $\lambda_a(\eta)$. Note that
$\|\lambda_a(\eta)\|$ is the \emph{lambda norm} of $\eta$ defined by Vogan in \cite{V81}.

Let $\pi$ be irreducible admissible $(\frg,K)$ module. Take one of its lambda lowest $K$-types $E_{\mu}$. Let $G(\lambda_a(\mu))$ be the isotropic group of $\lambda_a(\mu)$ under the $G$ action. Let $H_{\rm s}=T_{\rm s} A_{\rm s}$ be a maximally split $\theta$-stable Cartan subgroup of $G(\lambda_a(\mu))$.
Then the infinitesimal character of  $\pi$ has the form
\begin{equation}\label{inf-char}
	\Lambda=\big(\lambda_a(\mu),\nu\big)\in \frh_{\rm s}^*=\frt_{\rm s}^*+\fra_{\rm s}^*.
\end{equation}
The Helgason-Johnson bound derived in 1969 says that
$$
\|\nu\|\leq \|\rho\|
$$
whenever $\pi$ is unitary, see \cite{HJ}.

Earlier we have sharpened the Helgason-Johnson bound for all linear exceptional Lie groups \cite{D23}. The current paper focuses on $Sp(p, q)$.
Put
\begin{equation}\label{B0}
	B_0:=\dfrac{4p^3+12pq(q-1)-p}{6}+4\delta_{p,q},
\end{equation}
where $\delta_{p,q}$ denotes the Kronecker delta; that is
\[\delta_{p,q}=\left\{
\begin{array}{ll}
	1, & \text{if } q=p,\\
	0, & \text{otherwise}
\end{array}
\right.\]
The meaning of $B_0$ will be explained in \eqref{B0-explanation}.

\begin{thm}\label{thm-main}
	Let $\pi$ be any irreducible unitary $(\frg, K)$ module for $Sp(p,q)$ which is infinite dimensional. Assume that its infinitesimal character is given by \eqref{inf-char}. Then
	\[\Vert\nu\Vert\le \left\{
	\begin{array}{ll}
		B_0+\dfrac{3}{2}, & \text{if } q=p+1,\\
		B_0, & \text{otherwise}
	\end{array}
	\right.\]
\end{thm}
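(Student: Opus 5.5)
The plan is to follow the strategy of \cite{D23}, which combines Parthasarathy's Dirac inequality with Vogan pencils of $K$-types. Let $\pi$ be unitary with infinitesimal character $\Lambda=(\lambda_a(\mu),\nu)$. For every $K$-type $E_\gamma$ occurring in $\pi$ and every $w\in W(\frg,\frt)^1$ with $\gamma-\rho_n^{(w)}$ dominant for $\Delta^+(\frk,\frt)$, the Dirac inequality gives
\[
\|\Lambda\|^2=\|\lambda_a(\mu)\|^2+\|\nu\|^2\le \|\gamma-\rho_n^{(w)}+\rho_c\|^2 ,
\]
and minimizing over $w$ turns the right side into the spin norm $\|\gamma\|_{\rm spin}^2$. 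Hence
\[
\|\nu\|^2\le \|\gamma\|_{\rm spin}^2-\|\lambda_a(\mu)\|^2 .
\]
The goal is therefore to exhibit, inside every infinite dimensional unitary $\pi$, a $K$-type $\gamma$ whose spin norm is controlled. Because $\pi$ is infinite dimensional, the Vogan pencil $\mathcal{P}(\mu)=\{\mu+k\beta\mid k\ge 0\}$ is available. Here $\beta=e_1+e_{p+1}$ is the highest weight of $\frp$ ($\frp\cong\mathbb{C}^{2p}\boxtimes\mathbb{C}^{2q}$ as a $K$-module). Every member of this pencil occurs in $\pi$, so one may choose $\gamma=\mu+k\beta$ with $k$ adapted to $\mu$.

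The key steps are as follows. First, I would describe $\lambda_a(\eta)$ explicitly for $Sp(p,q)$. I would use the projection $P$ onto $\caC_w$ and the description \eqref{wgt-1} of $W(\frg,\frt)^1$; this computes $\|\lambda_a(\mu)\|$ in terms of the coordinates of $\mu$. Second, I would split into two cases. The first case is that $\mu$ is \emph{u-small}, meaning $\mu$ lies in the convex hull of $2\rho_n^{(w)}$ shifted appropriately; there are finitely many such $K$-types. The second case is that $\mu$ is not u-small. In that case the u-small theory (Salamanca-Riba--Vogan \cite{SV}) shows that $\lambda_a(\mu)$ is large, and $\pi$ is cohomologically induced in the good range from a proper $\theta$-stable parabolic. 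That reduces the bound to the Levi factor, which is a product of $U(a,b)$ and smaller $Sp(p',q')$ factors. Induction on $n$, together with the known bounds for unitary groups, then handles this case.

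In the u-small case the required estimate is
\[
\max_{\mu\ \text{u-small}}\ \min_{k\ge 0}\Big(\|\mu+k\beta\|_{\rm spin}^2-\|\lambda_a(\mu)\|^2\Big).
\]
Here $k$ ranges over $k=0,1$, and $k$ may need to be larger when $\mu$ itself is a lowest $K$-type with small spin norm. I expect this quantity to equal $B_0$ in \eqref{B0}. The cubic shape $\tfrac{4p^3+12pq(q-1)-p}{6}$ looks like a difference of norms of $\rho$-type strings such as $(n,\dots,q+1\mid q,\dots,1)$. I would verify this by checking that the maximum is attained at the trivial-like extreme u-small $K$-type and then taking $k=1$. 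The correction $4\delta_{p,q}$ and the extra $3/2$ for $q=p+1$ should come from degenerate chamber configurations. In those configurations, the $w$ realizing the spin norm of $\mu+\beta$ changes when $q-p\le1$. I would treat these configurations separately.

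The main obstacle is the u-small maximization. The u-small $K$-types form an explicit but large family. For each of them one must control both the spin norm of a pencil member and the lambda norm, and the two involve different chambers $\caC_w$. My first attempt would be a monotonicity argument. I would show that moving $\mu$ toward $2\rho_n^{(w)}$ coordinatewise does not decrease the difference. That would reduce the maximization to the vertices $2\rho_n^{(w)}$, where $\lambda_a$ and the spin norms can be computed in closed form. The small cases $q-p\in\{0,1\}$ would then be checked by hand.
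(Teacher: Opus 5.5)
Your proposal has a genuine gap in the u-large case. The Dirac-inequality-plus-pencil framework and the plan to make $\lambda_a$ explicit do match the paper. The problem is your claim that a unitary $\pi$ whose lambda-lowest $K$-type is u-large is cohomologically induced from a proper $\theta$-stable parabolic. That is the Salamanca-Riba--Vogan conjecture \cite{SV}, and it is not available as a theorem for $Sp(p,q)$, so it cannot be invoked.

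Even granting it, your ``induction on $n$'' is not set up. The inducing module may have finite-dimensional factors, e.g.\ the trivial representation of an $Sp(p',q')$ factor, and an inductive hypothesis about infinite-dimensional modules says nothing about those. Helgason--Johnson on the Levi is also too weak: for $q=p$ and $\mathfrak{l}=\mathfrak{u}(1)\oplus\mathfrak{sp}(p-1,p)$ one gets $\|\rho_{\mathfrak{l}}\|^2=\sum_{s=1}^{2p-1}s^2=B_0+\frac p2-4$, which exceeds $B_0$ for $p>8$.

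The paper needs no structure theory here, and this elementary estimate is the idea your proposal is missing. For u-large $\mu$ it keeps $m=0$, picks $w$ with $\mu+2\rho_c\in\caC_w$, and uses Lemma \ref{lemma-Phi-Dr} to write $D_w(\mu)$ as in \eqref{large-2}. It then uses three bounds. First, $\caF_p(K_w)+\caF_q(R_w)\le\sum_{i,j}\max\{P_i,Q_j\}$. Second, $e(\mu)\le p$. Third, $\caD_p(a,K_w)+\caD_q(b,R_w)\ge 2pq$ (Lemma \ref{lem-large}), proved by induction on $p+q$. The key input there is Lemma \ref{large}: u-largeness forces $a_i>2k_i$ or $b_j>2r_j$ for some index. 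Together these give $D_w(\mu)\le B_0-4\delta_{p,q}$ directly.

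Your u-small plan also points the wrong way. The extremal $K$-type is the trivial one: $B_0=B(\texttt{triv})$, and $e_1$ gives $B_0+\frac32$ when $q=p+1$. The optimal pencil member is $p\beta$ (resp.\ $e_1+p\beta$), with spin norm realized at $(q-1,\dots,q-1\mid p,0,\dots,0)$, so $k\in\{0,1\}$ is not enough.

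A monotonicity pushing $\mu$ toward the vertices $2\rho_n^{(w)}$ is in fact false. Take $\rho^{(w)}=\rho$ and the vertex $\mu=2\rho_n^{(w)}=(2q,\dots,2q\mid 0,\dots,0)$. Then $\lambda_a(\mu)=\rho=\{\mu-\rho_n^{(w)}\}+\rho_c$, so the quantity you want to maximize is $0$ there, while it is $B_0$ at $\mu=0$.

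The monotonicity that actually holds is that of the lambda norm (Proposition \ref{lambda-up}), and the paper uses it in the opposite direction. When $a_1<q$ and $b_1<p$, it takes $m=p-b_1$ or $m=q-a_1$ with an explicit $w$. It bounds $\{\mu+m\beta-\rho_n^{(w)}\}$ from above by weak domination (Lemma \ref{lem-Phi}) against the corresponding weights for the model $K$-types $\zeta_r,\eta_k$. It bounds $L(\mu)$ from below by $L(\zeta_r)$ or $L(\eta_k)$. When $a_1\ge q$, it truncates via Lemma \ref{lem-1} and uses the convexity of $D_w$ in $b$ (Proposition \ref{Jensen}) to reduce to the extreme points $(K_w\mid\phi_t)$.

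Finally, in your Dirac inequality you should use $\|\{\gamma-\rho_n^{(w)}\}+\rho_c\|$ (the PRV component) for all $w$. Restricting to $w$ with $\gamma-\rho_n^{(w)}$ dominant is too weak; for $\gamma=0$ no such $w$ exists.
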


Note that when $q=p$,
$$
\|\rho\|^2=\frac{p}{3}+2 p^2+\frac{8 p^3}{3}
$$
while our bound reads as
$$
4-\frac{p}{6}-2 p^2+\frac{8 p^3}{3}
$$
It is not a huge improvement. However, when $q=k p$ for some positive integer $k>1$,
$$
\|\rho\|^2=\frac{p}{6}+\frac{k p}{6}+\frac{p^2}{2}+k p^2+\frac{k^2 p^2}{2}+\frac{p^3}{3}+k p^3+k^2 p^3+\frac{k^3 p^3}{3}
$$
while our bound reads as
$$
-\frac{p}{6}-2 k p^2+\frac{2 p^3}{3}+2 k^2 p^3.
$$
The improvement is noticeable now.

Since $Sp(p, q)$ has no minimal representation, our bound is not tight. But we believe that it is close to be.
For instance, consider $Sp(3, 3)$. The trivial representation has the following \texttt{atlas} \cite{ALTV,At} final parameter:
$$
(679, [6,5,4,3,2,1], [11,11,7,7,3,3]/2).
$$
The last vector is $\nu$ of the trivial representation. Note that the Helgason-Johnson bound is $\|\rho\|=\sqrt{91}$, while $\|\nu\|=\sqrt{89.5}$. On the other hand, our bound is $\sqrt{57.5}$. Let $\pi$ be the irreducible representation of $Sp(3, 3)$ with the following final parameter:
$$
(634, [6,4,5,3,2,1], [9,0,9,5,5,0]/2)
$$
\texttt{atlas} tells us that $\pi$ is infinite-dimensional and unitary.
One computes that $\|\nu(\pi)\|=\sqrt{53}$.

Vogan addressed the question in 2020 \cite{V20-1, V20-2} by using Parthasarathy's \emph{Dirac operator inequality} \cite{Pa72, Pa80}. Here Dirac inequality is also a major tool for us: we use it in the compact way as \emph{spin norm} (see \S \ref{sec-spin-norm} for its precise definition), which also builds in the knowledge of PRV component \cite{PRV}.


Now let us mention the outline of the current paper. Section \ref{sec-pre} collects necessary preliminaries. Section \ref{sec-lambda-norm} gives an explicit formula for the lambda norm of $K$-types of $Sp(p, q)$, which is the first new ingredient. It has been obtained by the first named author in his 2011 thesis that
\begin{equation}\label{spin-lambda}
\|\mu\|_{\rm spin} \ge \|\mu\|_{\rm lambda}.
\end{equation}
See \cite{D13}. Then two natural questions emerge: (i) when shall we have equality in \eqref{spin-lambda}?
(ii) to which extent can we understand the following difference?
$$
\|\mu\|_{\rm spin}^2 - \|\mu\|_{\rm lambda}^2
$$
In 2016, a representation-theoretic answer to (i) was given in \cite{DD16} based on Vogan bijection.  On the other hand, (ii) has been a long standing perplexity for us. In Section \ref{sec-convexity}, we find certain convexity of the above difference.  This is the second new ingredient, which allows us to use Jensen inequality among some integral vectors. After doing necessary preparations in Section \ref{sec-RT}, we prove Theorem \ref{thm-main} for u-small (resp., u-large) $K$-types in Section \ref{sec-u-small} (resp., Section \ref{sec-u-large}).

\section{Preliminaries}\label{sec-pre}
Recall that we have fixed
$$
\Delta^+(\frg, \frt)=\Delta^+(\frk, \frt) \cup \Delta^+(\frp, \frt)
$$
in the introduction. Then $e_t-e_{t+1}(1\le t\le n-1)$ and $2e_n$ are the simple roots of $\Delta^+(\frg,\frt)$, with the corresponding fundamental dominant weights being $\xi_j=\sum_{t=1}^j e_t$, $1\le j\le n$. Note that $\beta:=e_1+e_{p+1}$ is the highest weight of $\frp$.

Let $\pi$ be an \emph{infinite dimensional} irreducible representation of $G$. Let $\mu$ be (the highest weight of) a $K$-type occurring in $\pi$. By \cite{V80},  the $K$-types of $\pi$ is a union of \emph{Vogan pencils}:
\begin{equation}
	\mathrm{Pencil}(\mu):=\{\mu+m\beta\mid m\in \bbZ_{\ge 0}\}.
\end{equation}

Put
\[B(\mu):=\min_{m\in\bbZ_{\ge 0}}\Vert \mu+m\beta\Vert_{\mathrm{spin}}-\Vert\mu\Vert_{\mathrm{lambda}},\]
where $\Vert\cdot\Vert_{\mathrm{spin}}$ and $\Vert\cdot\Vert_{\mathrm{lambda}}$ stand for spin norm and lambda norm, respectively. The notion spin norm will be introduced in \S\ref{sec-spin-norm} soon.

\subsection{Spin norm}\label{sec-spin-norm}
Let $\caC$ (resp. $\caC_\frg$) be the dominant Weyl chamber corresponding to $\Delta^+(\frk,\frt)$ (resp. $\Delta^+(\frg,\frt)$). Since $\caC_\frg\subset \caC$, we define
\[W(\frg,\frt)^1=\{w\in W(\frg,\frt)\mid w(\caC_\frg)\subset \caC\}.\]
In particular, we denote by $\caC_w$ the Weyl Chamber $w(\caC_\frg)$.

Since $\frk$ has no center, then $\Delta(\frp,\frt)$ is a root system. For $w\in W(\frg,\frt)^1$,
\[w\Delta^+(\frg,\frt)=\Delta^+(\frk,\frt)\cup w\Delta^+(\frp,\frt).\]
As in \cite{DY}, $w\in W(\frg,\frt)^1$ acts as follows: from the $n$ components $(z_1,\dots,z_n)$, it selects $p$ of them, arranges these selected ones in descending orders as $(u_1,\dots,u_p)$, and writes the remaining compents, preserving their original relative order, as $(v_1,\dots,v_q)$. Then
\begin{equation}\label{wgt1}
	w(z_1,\dots,z_n)=(u_1,\dots,u_p\mid v_1,\dots,v_q).
\end{equation}

 Denote by $\rho_n^{(w)}$ the half sum of the roots in $w\Delta^+(\frp,\frt)$, then
 \[\rho_n^{(w)}=w\rho-\rho_c.\]
  Let $\mu$ be the highest weight of a $K$-type. Then the spin norm of $\mu$ is defined to be
 \begin{equation}\label{spin}
 	\Vert\mu\Vert_{\mathrm{spin}}:=\min_{w\in W(\frg,\frt)^1}\Vert \{\mu-\rho_n^{(w)}\}+\rho_c\Vert,
 \end{equation}
 where $\{\mu-\rho_n^{(w)}\}$ is the unique dominant weight that is conjugate to $\mu-\rho_n^{(w)}$ under the action of $W(\frk,\frt)$.

 Assume that $u_1, \dots, u_r$ are real numbers. Let $u_{(1)}\ge\cdots\ge u_{(r)}$ be the decreasing rearrangement of the absolute values of  $u_1, \dots, u_r$. Define
  $$\{u_1, \dots, u_r\}_r=(u_{(1)}, \dots, u_{(r)})$$
  and
 \begin{equation}\label{Phi}
 	\Phi_r(u_1,\dots,u_r)=\sum_{t=1}^r(u_{(t)}+r-t+1)^2.
 \end{equation}

 For any weight $\eta=(a_1, \dots, a_p\mid b_1, \dots, b_q)$, define
 \begin{equation}\label{k-norm}
 \Vert\eta\Vert_\frk^2:=\Vert\{\eta\}+\rho_c\Vert^2=\Phi_p(a_1, \dots, a_p)+\Phi_q(b_1, \dots, b_q).
 \end{equation}
Note that based on \cite{HP}, the spin norm was introduced in \cite{D13}. It offers a compact way of expressing the Dirac inequality.

 \begin{lemma}\label{lem-Phi}
 	Suppose that $u_1\ge\cdots\ge u_r$ and $v_1\ge\cdots\ge v_r$ are two nonnegative real number sequences. If
 	\[\sum_{t=1}^l u_t\le \sum_{t=1}^lv_t,\quad 1\le l\le r,\]
 	$\Phi_r(u_1,\dots,u_r)\le\Phi_r(v_1,\dots,v_r)$.
 \end{lemma}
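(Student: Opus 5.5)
Since both sequences are already nonnegative and decreasing, $u_{(t)}=u_t$ and $v_{(t)}=v_t$, so
\[\Phi_r(u_1,\dots,u_r)=\sum_{t=1}^r (u_t+c_t)^2,\qquad \Phi_r(v_1,\dots,v_r)=\sum_{t=1}^r (v_t+c_t)^2,\]
with $c_t:=r-t+1$. The weights satisfy $c_1>c_2>\cdots>c_r>0$. The plan is to compare the two sums term by term using convexity, and then to control the resulting linear expression by Abel summation against the partial-sum hypothesis.

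First, the function $x\mapsto x^2$ is convex, so
\[(v_t+c_t)^2-(u_t+c_t)^2\ \ge\ 2(u_t+c_t)(v_t-u_t).\]
Write $d_t:=v_t-u_t$ and $D_l:=\sum_{t=1}^l d_t$. The hypothesis says exactly that $D_l\ge 0$ for $1\le l\le r$, with $D_0=0$. It therefore suffices to prove
\[\sum_{t=1}^r a_t d_t\ \ge\ 0,\qquad a_t:=u_t+c_t .\]

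Second, the coefficients $a_t$ are positive. They are also strictly decreasing in $t$, because $u_t$ is nonincreasing and $c_t$ is strictly decreasing. Abel summation then gives
\[\sum_{t=1}^r a_t d_t=\sum_{l=1}^{r-1}(a_l-a_{l+1})D_l + a_r D_r.\]
Every factor on the right is nonnegative: $a_l-a_{l+1}>0$, $a_r>0$, and $D_l\ge 0$. Hence the sum is $\ge 0$, and summing the convexity inequality over $t$ yields $\Phi_r(u_1,\dots,u_r)\le\Phi_r(v_1,\dots,v_r)$.

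The only step that needs care is choosing the tangent point in the convexity estimate. The linearization must be taken at $u_t+c_t$, not at $v_t+c_t$, so that the coefficients $a_t$ come from the already-sorted sequence $u$ and are guaranteed to be monotone. Once that choice is made, the argument is the standard weak-majorization (Karamata/Tomić) argument, and the shift by $c_t$ only helps, since it makes $a_t$ strictly decreasing. The argument does not use $v_r\ge 0$ beyond the fact that $v$ is already sorted, so no absolute values or rearrangement appear on the $v$ side.
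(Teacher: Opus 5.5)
Your proof is correct and is essentially the paper's argument in a single package. The paper applies Abel summation twice: once to the quadratic part, getting $\sum_t u_t^2\le\sum_t u_tv_t$ and then using $2u_tv_t\le u_t^2+v_t^2$, and once to the linear part, via $\sum_t(r-t+1)u_t=\sum_l\sum_{t\le l}u_t$. You instead merge both into one Abel summation with the decreasing weights $u_t+r-t+1$ after the tangent-line bound, which rests on the same inequality $(v_t-u_t)^2\ge 0$.
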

 \begin{proof}
 	Set $u_{r+1}=0$. Define
 	\[U_l=\sum_{t=1}^l u_t,\quad V_l=\sum_{t=1}^l v_t.\]
 	From $U_l\le V_l$ and $u_l-u_{l+1}\ge 0$, we have
 	\[\sum_{t=1}^ru_t^2=\sum_{l=1}^rU_l(u_l-u_{l+1})\le\sum_{l=1}^rV_l(u_l-u_{l+1})=\sum_{t=1}^ru_tv_t.\]
 	Using $2u_tv_t\le u_t^2+v_t^2$, we obtain
 	\[\sum_tu_t^2\le \sum_{t} v_t^2.\]
 	Notice that
 	\[\sum_{t=1}^r(r-t+1)u_t=\sum_{l=1}^rU_l,\quad \sum_{t=1}^r(r-t+1)v_t=\sum_{l=1}^rV_l,\]
 	then
 	\begin{align*}
 	\Phi_r(u_1,\dots,u_r)&=\sum_{t=1}^r(u_{t}+r-t+1)^2=\sum_{t=1}^ru_t^2+2\sum_{t=1}^r(r-t+1)u_t+(r-t+1)^2\\
 	&=\sum_{t=1}^ru_t^2+2\sum_{l=1}^rU_l+(r-t+1)^2\\
 	&\le \sum_{t=1}^rv_t^2+2\sum_{l=1}^rV_l+(r-t+1)^2=\Phi_r(v_1,\dots,v_r).
 \end{align*}
 \end{proof}
  For nonnegative integer vectors $u=(u_1,\dots,u_p\mid u_{p+1},\dots,u_n)$ and $v=(v_1,\dots,v_p\mid v_{p+1},\dots,v_n)$, we say that $u$  \emph{weakly dominates} $v$ if $u_t\ge v_t$ for all $1\le t\le n$. In this case, we write $u\gg v$. Lemma \ref{lem-Phi} shows that if $v\gg u$, then
  \begin{equation}\label{weakly-dominate}
  \Vert v\Vert_\frk^2=\Phi_p(v_1,\dots,v_p)+\Phi_q(v_{p+1},\dots,v_n)\ge \Phi_p(u_1,\dots,u_p)+\Phi_q(u_{p+1},\dots,u_n)=\Vert u\Vert_\frk^2.
  \end{equation}

\subsection{Unitarily small $K$-types}In \cite{SV}, Salamanca-Riba and Vogan introduced the concept of the \emph{unitarily small convex hull}, which is defined as
\[R(\Delta\big(\frp,\frt)\big):=\left\{\sum_{\alpha\in\Delta(\frp,\frt)}b_\alpha\alpha\mid 0\le b-\alpha\le 1\right\}.\]
 We say that $\mu$ is \emph{u-small}, if it lies within the unitarily small convex hull; otherwise, we say that $\mu$ is \emph{u-large}. Let $\mathcal{U}_{p,q}$ denote the set of all u-small $\Delta^+(\frk,\frt)$-dominant weights and $\mathcal{U}_{p,q}^c$ denote the the set of all u-large $\Delta^+(\frk,\frt)$-dominant weights.

  Put
 \begin{equation}\label{T}
 	\mathcal{T}(\mu):=\max\{m\in\bbZ_{\ge 0}\mid \mu+m\beta\in\mathcal{U}_{p,q}\},\quad \mu\in \mathcal{U}_{p,q}
 \end{equation}
 \emph{Throughout this paper}, we set
 \begin{equation}\label{mu}
 	\mu=(a_1, \dots, a_p \mid b_1, \dots, b_q)
 \end{equation}
 as the highest weight of a $K$-type. We may also write $a=(a_1, \dots, a_p)$ and $b=(b_1, \dots, b_q)$.

\begin{lemma}[Theorem 6.7(d) of \cite{SV}]\label{usmall-1}
	Let $\mu$ be any $\Delta^+(\frk, \frt)$-dominant weight and $w\in W(\frg, \frt)^1$. If $\mu\in \mathcal{C}_w$, then $\mu \in\mathcal{U}_{p, q}$ if and only if $\langle\mu+2\rho_c, w\xi_t \rangle\le 2\langle \rho, \xi_t \rangle$ for all $1\le t\le n$.
\end{lemma}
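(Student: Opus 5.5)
The plan is to convert membership in the convex hull $R(\Delta(\frp,\frt))$ into support-function inequalities, then compute the support function one chamber at a time. Write $h_R(x)=\max_{y\in R}\langle y,x\rangle$. Since $R$ is a zonotope, $h_R(x)=\sum_{\alpha\in\Delta(\frp,\frt)}\max(0,\langle\alpha,x\rangle)$. By convexity, $\mu\in R$ if and only if $\langle\mu,x\rangle\le h_R(x)$ for every $x$.

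First I rewrite the right-hand side of the lemma. Since $\langle\rho,\xi_t\rangle=\langle w\rho,w\xi_t\rangle$ and $w\rho=\rho_c+\rho_n^{(w)}$, the inequality $\langle\mu+2\rho_c,w\xi_t\rangle\le 2\langle\rho,\xi_t\rangle$ is equivalent to
\[
\langle\mu,w\xi_t\rangle\le\langle 2\rho_n^{(w)},w\xi_t\rangle .
\]
Next I evaluate $h_R$ on the closed chamber $\overline{\caC_{w'}}$ for any $w'\in W(\frg,\frt)^1$. If $x\in\overline{\caC_{w'}}$, then every $\alpha\in w'\Delta^+(\frp,\frt)$ satisfies $\langle\alpha,x\rangle\ge0$, and every root in $-w'\Delta^+(\frp,\frt)$ satisfies $\langle\alpha,x\rangle\le 0$. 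Hence
\[
h_R(x)=\langle 2\rho_n^{(w')},x\rangle .
\]
Taking $w'=w$ and $x=w\xi_t\in\overline{\caC_w}$ gives the implication ``$\mu$ u-small $\Rightarrow$ the inequalities'' at once.

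For the converse, I first reduce the set of test vectors $x$. Both $R$ and $\mu$'s orbit are $W(\frk,\frt)$-stable and $\mu$ is $\frk$-dominant, so it suffices to test $\frk$-dominant $x$. Such an $x$ lies in some $\overline{\caC_{w'}}$, so it is a nonnegative combination of the vectors $w'\xi_t$. By the formula for $h_R$ on $\overline{\caC_{w'}}$, $h_R$ is linear there, so the test reduces to
\[
\langle\mu,w'\xi_t\rangle\le\langle 2\rho_n^{(w')},w'\xi_t\rangle=2\langle\rho,\xi_t\rangle-2\langle\rho_c,w'\xi_t\rangle
\]
for all $w'\in W(\frg,\frt)^1$ and all $t$. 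So what must be shown is
\[
\langle\mu+2\rho_c,w'\xi_t\rangle\le 2\langle\rho,\xi_t\rangle \quad\text{for all } w',
\]
knowing it only for the particular $w$ with $\mu\in\caC_w$.

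\textbf{Main obstacle.} The step I expect to be hardest is passing from $w$ to arbitrary $w'$. The standard rearrangement inequality says that $\langle\lambda,y\rangle$ over the $W(\frg,\frt)$-orbit of $y$ is maximized at the representative lying in the same chamber as $\lambda$. I would apply this to $\lambda=\mu+2\rho_c$ and $y=\xi_t$. The difficulty is that $\mu+2\rho_c$ need not lie in $\overline{\caC_w}$, only $\mu$ does. So I would instead split the pairing as
\[
\langle\mu,w'\xi_t\rangle+\langle 2\rho_c,w'\xi_t\rangle ,
\]
bound the first term by $\langle\mu,w\xi_t\rangle$ (rearrangement, since $\mu\in\overline{\caC_w}$), and try to show that $\langle\rho_n^{(w')},w'\xi_t\rangle$ is minimized, among the relevant $w'$, at $w'=w$, or at least is never below $\langle\rho_n^{(w)},w\xi_t\rangle$.

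In the coordinates of \eqref{wgt1} this is a concrete combinatorial statement about sums of the largest $t$ entries of interleavings of $(n,\dots,q+1)$ and $(q,\dots,1)$, and I would check it directly. If this monotonicity fails, the fallback is the cleaner route of Salamanca-Riba and Vogan: show that $R$ equals the convex hull of the $W(\frk,\frt)$-orbits of $2\rho_n^{(w')}$, and that a $\caC_w$-dominant point below the vertex $2\rho_n^{(w)}$ in the $w\Delta^+$-dominance order lies in that hull, by descending along roots of $w\Delta^+(\frp,\frt)$.
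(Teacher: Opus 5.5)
Your forward implication is correct. So is your reduction of the converse: via $h_R(x)=\sum_{\alpha\in\Delta(\frp,\frt)}\max(0,\langle\alpha,x\rangle)$, $W(\frk,\frt)$-invariance, and linearity of $h_R$ on each $\overline{\caC_{w'}}$, a $\frk$-dominant $\mu$ is u-small if and only if $\langle\mu+2\rho_c,w'\xi_t\rangle\le 2\langle\rho,\xi_t\rangle$ for \emph{all} $w'\in W(\frg,\frt)^1$ and all $t$.

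\textbf{The gap.} The gap is the passage from the single $w$ to all $w'$. It cannot be closed using only $\mu\in\overline{\caC_w}$, because in that form the converse is false. The paper's chambers are closed, since $\eta+2\rho_c\in\caC_w$ must be attainable for every $\eta\in\caC$. Reading them as open does not rescue it: an integral point of an open chamber has coordinate sum at least $n(n+1)/2>2pq=\langle 2\rho_n^{(w)},w\xi_n\rangle$, so both sides of the equivalence are then always false.

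\textbf{A counterexample.} Take $Sp(1,3)$, $w=1$ and $\mu=(3\mid 3,0,0)$, which is $\Delta^+(\frk,\frt)$-dominant and lies in $\overline{\caC_\frg}$. Here $\mu+2\rho_c=(5\mid 9,4,2)$ and $2\rho=(8\mid 6,4,2)$, so the four inequalities read $5\le 8$, $14\le 14$, $18\le 18$, $20\le 20$. Yet $\mu$ is not u-small. Only the four roots $\pm e_1\pm e_2$ of $\Delta(\frp,\frt)$ involve $e_2$, so every point of $R(\Delta(\frp,\frt))$ has $e_2$-coordinate at most $2$; this is also Lemma \ref{usmall-2} with $f=0$, $g=1$.

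\textbf{Why both of your mechanisms fail.} For $w'$ with $w'\rho=(3\mid 4,2,1)$ we have $\rho_n^{(w')}=(2\mid 1,0,0)$ and $w'\xi_1=e_2$. Hence $\langle\rho_n^{(w')},w'\xi_1\rangle=1<3=\langle\rho_n^{(w)},w\xi_1\rangle$, so your monotonicity fails. In general, since $\langle\rho_n^{(w')},w'\xi_t\rangle=\langle\rho,\xi_t\rangle-\langle\rho_c,w'\xi_t\rangle$, that monotonicity would require every admissible $w$ to maximize $\langle\rho_c,w'\xi_t\rangle$, which is impossible. Your fallback also fails: $2\rho_n^{(w)}-\mu=(6\mid 0,0,0)-(3\mid 3,0,0)=3(e_1-e_2)$ is a nonnegative multiple of a simple root. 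So a $\caC_w$-dominant point below $2\rho_n^{(w)}$ need not lie in $R$; that claim is just the converse restated.

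\textbf{The fix.} The missing ingredient is the correct hypothesis: $w$ must be chosen with $\mu+2\rho_c\in\caC_w$. This is the form in which the criterion is true, and it is what the paper actually relies on: Lemma \ref{lem-large} assumes $\mu+2\rho_c\in\caC_w$ and then invokes Lemma \ref{large}. With this hypothesis your proof closes in one line, at exactly the step you flagged. Both $\mu+2\rho_c$ and $w\xi_t$ are $w\Delta^+(\frg,\frt)$-dominant, and $w'\xi_t$ is $W(\frg,\frt)$-conjugate to $w\xi_t$. So $\langle\mu+2\rho_c,w'\xi_t\rangle\le\langle\mu+2\rho_c,w\xi_t\rangle\le 2\langle\rho,\xi_t\rangle$ for every $w'$.

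Conceptually, $\mathrm{conv}(W(\frg,\frt)\cdot 2\rho)=\mathrm{conv}(W(\frk,\frt)\cdot 2\rho_c)+R(\Delta(\frp,\frt))$ as zonotopes. Hence a $\frk$-dominant $\mu$ is u-small if and only if $\mu+2\rho_c\in\mathrm{conv}(W(\frg,\frt)\cdot 2\rho)$. The stated inequalities are then the usual dominance test comparing the $w\Delta^+(\frg,\frt)$-dominant vector $\mu+2\rho_c$ with $2w\rho$.
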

\begin{lemma}[Proposition 2.2 of \cite{DY}]\label{usmall-2}
Take $\mu$ as in \eqref{mu}. Assume in addition that $\mu$ is in $\caU_{p,q}$. Then $\mu$ is u-small if and only if
	\begin{equation}\label{usmall-2.1}
		\sum_{i=1}^f a_i+\sum_{j=1}^g b_j\le 2pq-2(p-f)(q-g)
	\end{equation}
	for any $0\le f\le p$ and $0\le g\le q$.
\end{lemma}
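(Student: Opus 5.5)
The plan is to prove the criterion directly from the definition of the unitarily small convex hull as a zonotope, and to test membership with linear functionals. Observe first that $R(\Delta(\frp,\frt))=\sum_{\alpha\in\Delta(\frp,\frt)}[0,\alpha]$ is a Minkowski sum of segments. Its support function is therefore
$$
h(\lambda)=\sum_{\alpha\in\Delta(\frp,\frt)}\max(0,\langle\alpha,\lambda\rangle),
$$
and, since the set is closed and convex, a weight $\mu$ is u-small if and only if $\langle\mu,\lambda\rangle\le h(\lambda)$ for every $\lambda\in\frt^*$.

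Next I reduce to $\Delta^+(\frk,\frt)$-dominant $\lambda$. The set $\Delta(\frp,\frt)$ is $W(\frk,\frt)$-stable, so $h$ is $W(\frk,\frt)$-invariant. For dominant $\mu$ one has $\langle\mu,\lambda\rangle\le\langle\mu,\{\lambda\}\rangle$, where $\{\lambda\}$ is the dominant conjugate of $\lambda$. Hence it suffices to test dominant $\lambda=(x_1\ge\cdots\ge x_p\ge0\mid y_1\ge\cdots\ge y_q\ge 0)$. For such $\lambda$, group the four noncompact roots $\pm e_i\pm e_j$ ($i\le p<j$) together. Using $|s+t|+|s-t|=2\max(|s|,|t|)$, this gives
$$
h(\lambda)=2\sum_{i=1}^p\sum_{j=1}^q\max(x_i,y_j).
$$

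The key step is to linearize $h$. On the dominant cone, $h$ is piecewise linear: it is linear on each subcone where the interleaving order of the $x_i$'s and $y_j$'s is fixed. Each such subcone is simplicial. Its extreme rays are the $0/1$ vectors $\lambda_{f,g}=(1^f,0^{p-f}\mid 1^g,0^{q-g})$ for the pairs $(f,g)$ visited along the corresponding monotone lattice path; these are essentially the vectors $w\xi_t$ appearing in Lemma \ref{usmall-1}. Since both sides of the inequality are linear on a subcone, the inequality holds on the whole subcone if and only if it holds on its extreme rays. So $\mu$ is u-small if and only if $\langle\mu,\lambda_{f,g}\rangle\le h(\lambda_{f,g})$ for all $0\le f\le p$ and $0\le g\le q$.

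It remains to evaluate both sides. First, $\langle\mu,\lambda_{f,g}\rangle=\sum_{i\le f}a_i+\sum_{j\le g}b_j$. Second, $\max(x_i,y_j)=1$ exactly when $i\le f$ or $j\le g$, so $h(\lambda_{f,g})=2\bigl(pq-(p-f)(q-g)\bigr)$. This is precisely \eqref{usmall-2.1}.

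I expect the main obstacle to be making the linearization step rigorous: one must check carefully that $h$ is indeed linear on each order-cone and that these cones are generated by the $\lambda_{f,g}$. Alternatively, one could invoke Lemma \ref{usmall-1} directly, choosing $w$ with $\mu\in\caC_w$ and translating $\langle\mu+2\rho_c,w\xi_t\rangle\le2\langle\rho,\xi_t\rangle$ into coordinates. That route, however, needs extra care with the $2\rho_c$ shift and with the pairs $(f,g)$ not arising from $w$, which is why I prefer the support-function argument.
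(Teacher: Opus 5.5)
Your argument is correct and complete. The support function of $\sum_{\alpha\in\Delta(\frp,\frt)}[0,\alpha]$ is $2\sum_{i,j}\max(|x_i|,|y_j|)$, and the rearrangement inequality reduces the test to dominant $\lambda$. On each interleaving cone $\{z_1\ge\cdots\ge z_n\ge 0\}$ this function is linear, and the cone is generated by the rays $\lambda_{f,g}$, so the ``obstacle'' you flag is routine. You also read the hypothesis $\mu\in\caU_{p,q}$ correctly as $\Delta^+(\frk,\frt)$-dominance; as printed it is a slip.

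The paper gives no proof of this statement. It imports it from \cite{DY}, right after Lemma \ref{usmall-1}, which suggests the other route you mention. Choose $w$ with $\mu\in\caC_w$; then $w\xi_t=\lambda_{f,g}$ for the $n$ pairs $(f,g)$ on the lattice path of $w$. The $2\rho_c$ shift you worried about causes no trouble, because it cancels exactly: $2\langle\rho,\xi_t\rangle-\langle 2\rho_c,w\xi_t\rangle=\langle 2\rho_n^{(w)},w\xi_t\rangle=2fq+2gp-2fg=2pq-2(p-f)(q-g)$. So Salamanca-Riba--Vogan's criterion gives sufficiency from only those $n$ inequalities. Necessity of all $(p+1)(q+1)$ inequalities is the trivial half of your argument, $\langle\mu,\lambda_{f,g}\rangle\le h(\lambda_{f,g})$.

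That route is shorter and gives a sharper test, since only the chamber containing $\mu$ matters, but it rests on \cite[Theorem 6.7(d)]{SV}. Yours is self-contained from the definition of the hull and uses only elementary convexity. Its cost is testing every pair $(f,g)$, which is exactly what the statement asks for.
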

We record here some facts that will be used later.
\begin{lemma}\label{m}
Take $\mu$ as in \eqref{mu}. Assume in addition that $\mu$ is in $\caU_{p,q}$. Then
	\begin{itemize}
		\item[(a)] $a_1\le 2q$ and $b_1\le 2p$.
		\item[(b)] If $a_1< q$ and $b_1< p$, we have $\min\{q-a_1,p-b_1\}\le \mathcal{T}(\mu)$.
	\end{itemize}
\end{lemma}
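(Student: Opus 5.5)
Both parts follow directly from the characterization of $\caU_{p,q}$ in Lemma \ref{usmall-2}, applied with suitable choices of $(f,g)$.

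For (a), take $(f,g)=(1,0)$ in \eqref{usmall-2.1}. This gives
\[
a_1\le 2pq-2(p-1)q=2q .
\]
Symmetrically, $(f,g)=(0,1)$ gives
\[
b_1\le 2pq-2p(q-1)=2p .
\]

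For (b), set $m_0=\min\{q-a_1,\,p-b_1\}$, which is a positive integer under the hypotheses. By the definition \eqref{T} of $\mathcal{T}(\mu)$, it suffices to show that $\mu+m_0\beta\in\caU_{p,q}$. Since $\beta=e_1+e_{p+1}$, we have
\[
\mu+m_0\beta=(a_1+m_0,a_2,\dots,a_p\mid b_1+m_0,b_2,\dots,b_q).
\]
This weight is still $\Delta^+(\frk,\frt)$-dominant, because only the first entry of each block increases. So Lemma \ref{usmall-2} applies to it, and I will verify \eqref{usmall-2.1} for all $0\le f\le p$ and $0\le g\le q$.

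The verification splits into four cases according to whether $f$ and $g$ are zero.

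\emph{Case $f=g=0$.} Both sides of \eqref{usmall-2.1} are $0$, so there is nothing to check.

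\emph{Case $f\ge1$, $g\ge1$.} The left side of \eqref{usmall-2.1} increases by $2m_0$, so I must check
\[
\sum_{i\le f}a_i+\sum_{j\le g}b_j+2m_0\le 2pq-2(p-f)(q-g).
\]
Using dominance, $\sum_{i\le f}a_i\le f a_1$ and $\sum_{j\le g}b_j\le g b_1$. With $m_0\le q-a_1$ and $m_0\le p-b_1$, the left side is at most
\[
fa_1+gb_1+(q-a_1)+(p-b_1)=(f-1)a_1+(g-1)b_1+p+q .
\]
Since $a_1<q$ and $b_1<p$, this is at most $(f-1)q+(g-1)p+p+q=fq+gp$. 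The right side equals $2(fq+gp-fg)$. The required inequality $fq+gp\le 2fq+2gp-2fg$ is equivalent to
\[
f(q-g)+g(p-f)\ge 0,
\]
which holds because $f\le p$ and $g\le q$.

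\emph{Case $f\ge1$, $g=0$.} I need $\sum_{i\le f}a_i+m_0\le 2fq$. The left side is at most $fa_1+(q-a_1)\le fq$, which is enough.

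\emph{Case $f=0$, $g\ge1$.} Symmetrically, the left side is at most $gb_1+(p-b_1)\le gp$, which is at most $2gp$.

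The only potential obstacle was the $f,g\ge1$ case, and the crude bound by the largest entries already suffices there. The argument never uses the hypothesis that $\mu$ itself is u-small, beyond dominance and the strict inequalities $a_1<q$, $b_1<p$.
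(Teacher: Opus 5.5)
Your proof is correct and is essentially the paper's argument. For (a) it is identical. For (b) the paper notes that $\mu+m_0\beta$ is entrywise bounded by $\nu=(q,\dots,q\mid p,\dots,p)$, whose partial sums $fq+gp$ satisfy \eqref{usmall-2.1}; your case analysis is that same comparison written out explicitly.
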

\begin{proof}
	Setting $f=0, g=1$ and $f=1, g=0$ in \eqref{usmall-2.1} yields (a). For (b), consider $\nu=(q,\dots,q\mid p,\dots,p)$. Notice that for all $0\le f\le p$ and $0\le g\le q$,
	\[fq+gp\le 2(fq+gp)-2fg=2pq-2(p-f)(q-g).\]
	Then Lemma \ref{usmall-2} shows that $\nu$ is u-small. Now take $m=\min\{q-a_1, p-b_1\}$; then the fact that $\nu$ is u-small implies that $\mu+m\beta$ is also u-small. This proves (b).
\end{proof}
\begin{lemma}\label{large}
	Take $\mu$ as in \eqref{mu}. Assume in addition that $\mu$ is in $\caU_{p,q}^c$ and $w\in W(\frg,\frt)^1$. Write
	\[\rho_n^{(w)}=(k_1, \dots, k_p \mid r_1, \dots, r_q).\]
	 If $\mu \in \caC_w$, then either $a_i>2k_i$ for some $1\le i\le p$, or $b_j>2r_j$ for some $1\le j\le q$.
\end{lemma}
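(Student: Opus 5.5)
We argue by contradiction. Suppose $\mu\in\caC_w$ is u-large, but $a_i\le 2k_i$ for all $1\le i\le p$ and $b_j\le 2r_j$ for all $1\le j\le q$. In the notation of the paper, this says that $2\rho_n^{(w)}$ weakly dominates $\mu$:
\[
2\rho_n^{(w)}\gg\mu .
\]
We will use Lemma \ref{usmall-1} to conclude that $\mu$ is u-small, contradicting $\mu\in\caU^c_{p,q}$.

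The first step is to record that $\mu\in\caC_w$. By Lemma \ref{usmall-1}, $\mu$ is u-small precisely when
\[
\langle\mu+2\rho_c, w\xi_t\rangle\le 2\langle\rho,\xi_t\rangle\quad\text{for all } 1\le t\le n.
\]
Since $w$ is orthogonal, the right-hand side equals $2\langle\rho^{(w)}, w\xi_t\rangle=\langle 2\rho_n^{(w)}+2\rho_c, w\xi_t\rangle$. The criterion therefore reduces to
\[
\langle\mu, w\xi_t\rangle\le\langle 2\rho_n^{(w)}, w\xi_t\rangle\quad\text{for all } t.
\]

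The second step is to compute the weights $w\xi_t$. Since $\xi_t=e_1+\cdots+e_t$, the description \eqref{wgt1} shows that $w\xi_t$ is a $0/1$ vector with nonnegative coordinates. Concretely, $w$ sends the first $t$ coordinates of $\frt^*$ to some subset of the positions $1,\dots,n$, placing each selected one in either the $\frk$-block for $Sp(p)$ or the block for $Sp(q)$. No minus signs occur, because $w\in W(\frg,\frt)^1$ acts only by permuting coordinates, as described in \eqref{wgt-1}.

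The third step combines these facts. Each $w\xi_t$ has nonnegative entries, and every coordinate of $2\rho_n^{(w)}-\mu$ is nonnegative by the assumption $2\rho_n^{(w)}\gg\mu$. Hence
\[
\langle 2\rho_n^{(w)}-\mu, w\xi_t\rangle\ge 0\quad\text{for all } t,
\]
which is exactly the reduced criterion. So $\mu$ is u-small, a contradiction. Note that in this direction of Lemma \ref{usmall-1} we use only the hypothesis that $\mu\in\caC_w$.

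The main point to verify carefully is the second step, that the vectors $w\xi_t$ really are $0/1$ vectors. This depends on the elements of $W(\frg,\frt)^1$ involving no sign changes, which holds because $\caC_\frg$ and $\caC_w$ both sit inside the region where all coordinates are nonnegative. One should also confirm that $2\langle\rho,\xi_t\rangle=\langle 2\rho^{(w)},w\xi_t\rangle$ and that $\rho^{(w)}=\rho_c+\rho_n^{(w)}$; both are immediate from the definitions.
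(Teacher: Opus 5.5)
Your proof is correct and is the paper's argument in contrapositive form. The paper takes a violated inequality from Lemma~\ref{usmall-1}, rewrites it as $\langle\mu-2\rho_n^{(w)},w\xi_t\rangle>0$, and reads off a positive coordinate. Your check that each $w\xi_t$ is a $0/1$ vector, because elements of $W(\frg,\frt)^1$ involve no sign changes, is exactly the fact the paper uses without stating it in that last step.
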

\begin{proof}
	Since $\mu$ is u-large, then Lemma \ref{usmall-1}  implies theta exists $1\le t\le n$ such that
$$
\langle \mu+2\rho_c, w\xi_t\rangle>2\langle \rho, \xi_t\rangle,
$$
which is equivalent to
\[\langle\mu-2\rho_n^{(w)}, w\xi_t\rangle>0.\]
	Since the aforementioned inner product is positive, this implies that at least one of the quantities $a_i-2k_i$ or $b_j-2r_j$ is greater than $0$. This proves the lemma.
\end{proof}
\subsection{The structure of $\rho_n^{(w)}$}
Define
\[\Omega_{p,q}=\{\rho_n^{(w)}\mid w\in W(\frg,\frt)^1\}.\]
Then \cite{DY} shows that $\Omega_{p,q}$ is the set consisting of $(u_1,\dots,u_p\mid v_1,\dots,v_q)\in \bbZ^n$ such that
\begin{itemize}
	\item[(a)] $q\ge u_1\ge\cdots\ge u_p\ge 0$, $p\ge v_1\ge\cdots\ge v_q\ge 0$;
	\item[(b)] $\sum\limits_{i=1}^p u_i+\sum\limits_{j=1}^q v_j=pq$;
	\item[(c)] $v_j=\mathrm{card}\{i\mid q-u_i\ge j\}=\max\{i\mid q-u_i\ge j\}$ for any $1\le j\le q$.
\end{itemize}

Let us write a typical element in $\Omega_{p,q}$ as
\begin{equation}\label{rho-n-w}
\rho_n^{(w)}=(K_w\mid R_w)=(k_1,\dots,k_p\mid r_1,\dots,r_q).
\end{equation}
By convention, $k_0=q$,\ $r_0=p$ and $k_{p+1}=r_{q+1}=0$. For $1\le i\le q+1$, let
\[K_w(i):=(\underbrace{q-i+1,\dots,q-i+1}_{r_{i-1}-r_i})\]
if $r_{i-1}-r_i\neq 0$; let $K_w(i)$ be the empty array otherwise. For $1\le j\le p+1$, let
\[R_w(j):=(\underbrace{p-j+1,\dots,p-j+1}_{k_{j-1}-k_j})\]
if $k_{j-1}-k_j\neq 0$; let $R_w(j)$ be the empty array otherwise. Then
\[\rho_n^{(w)}=\big(K_w(1),\dots,K_w(p+1)\mid R_w(1),\dots,R_w(q+1)\big).\]

 \begin{lemma}\label{omegapq}
 	Let $(x_1, \dots, x_p \mid y_1, \dots, y_q)$ and $(k_1, \dots, k_p\mid r_1, \dots, r_q)$ be in $\Omega_{p, q}$. Then
 	\[\sum_{i=1}^p |x_i-k_i|=\sum_{j=1}^q |y_j-r_j|.\]
 \end{lemma}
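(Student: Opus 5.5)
The plan is to read each element of $\Omega_{p,q}$ as a Young diagram in a $p\times q$ box, and to see both sides of the identity as the size of one and the same symmetric difference. Rows will be counted first for the $x$ and $k$ parts, and columns for the $y$ and $r$ parts, via the complement.

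\emph{Step 1: the diagram.} For $(x_1,\dots,x_p\mid y_1,\dots,y_q)\in\Omega_{p,q}$, put
$$D_x=\{(i,c)\mid 1\le i\le p,\ 1\le c\le x_i\}\subset[1,p]\times[1,q].$$
This is a left-justified diagram, because $q\ge x_1\ge\cdots\ge x_p\ge 0$ by condition (a).

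\emph{Step 2: the $y$-part counts columns of the complement.} Use condition (c): $y_j=\mathrm{card}\{i\mid q-x_i\ge j\}$. Set $c=q-j+1$. Then $q-x_i\ge j$ is the same as $x_i<c$, which says the cell $(i,c)$ lies in the complement $D_x^c$. Hence $y_j$ is the number of cells of $D_x^c$ in column $q-j+1$. Since $x$ is decreasing, the rows $i$ with $x_i<c$ form a final segment $\{p-y_j+1,\dots,p\}$. So each column of $D_x^c$ is bottom-justified, and it is determined by its length $y_j$.

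\emph{Step 3: compare the two sides.} Take a second element $(k\mid r)$, with diagram $D_k$.
\begin{itemize}
\item Rows of $D_x$ and $D_k$ are initial segments. So row $i$ of $D_x\triangle D_k$ has exactly $|x_i-k_i|$ cells, and $\sum_i|x_i-k_i|=|D_x\triangle D_k|$.
\item Columns of $D_x^c$ and $D_k^c$ are final segments. So column $q-j+1$ of $D_x^c\triangle D_k^c$ has exactly $|y_j-r_j|$ cells, and $\sum_j|y_j-r_j|=|D_x^c\triangle D_k^c|$.
\end{itemize}
Finally, $D_x^c\triangle D_k^c=D_x\triangle D_k$, since complementation inside the box preserves symmetric differences. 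The two sums are therefore equal, which is the claim.

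The only point needing care is the translation of condition (c) into the column description of the complement, including the reversal $j\leftrightarrow q-j+1$. That translation is where I expect any off-by-one errors, so I would check it on a small case such as $p=1$: there $y_j=1$ exactly when $j\le q-x_1$, so the complement has $q-x_1$ cells, consistent with condition (b). Everything else is the elementary fact that the size of the symmetric difference of two nested intervals is the difference of their lengths.
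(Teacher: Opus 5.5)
Your proof is correct and is essentially the paper's argument: your complement $D_x^c$ is, up to a $180^\circ$ rotation, exactly the Young diagram with row lengths $q-x_p,\dots,q-x_1$ and column lengths $y_1,\dots,y_q$ (by condition (c)) that the paper uses. Both arguments count the boxes in the symmetric difference once by rows and once by columns. Your detour through $D_x$ and the identity $D_x^c\triangle D_k^c=D_x\triangle D_k$ is harmless but not needed.
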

 \begin{proof}
 Consider Young diagrams with row lengths $q-x_p,\dots,q-x_1$ and $q-k_p,\dots,q-k_1$, respectively. Then the column lengths of these two Young diagrams are $y_1,\dots,y_q$ and $r_1,\dots,r_q$, respectively. Consider the number of boxes that appear in exactly one of the two Young diagrams. On the one hand, counting by rows, we obtain
 		\[\sum_{i=1}^p|(q-x_i)-(q-k_i)|=\sum_{i=1}^p|x_i-k_i|.\]
 	On the other hand, counting by columns, we obtain
 		\[\sum_{j=1}^q|y_j-r_j|.\]
 	Therefore
 	\[\sum_{i=1}^p|x_i-k_i|=\sum_{j=1}^q|y_j-r_j|.\]
 \end{proof}

\section{Lambda norm for $Sp(p, q)$: an explicit formula}\label{sec-lambda-norm}
Recall that the lambda norm of a $K$-type has been defined in the introduction.
 This section aims to give an explicit form of the lambda norm in the $Sp(p, q)$ case.

 \begin{lemma}\label{lambda-1}
  Let $G$ be $Sp(p, q)$ and $\mu$ be a $\Delta^+(\frk,\frt)$-dominant weight. Choose $\widetilde{w}\in W(\frg,\frt)^1$ such that $\mu+2\rho_c\in C_{\widetilde{w}}$. Then
 \[\lambda_a(\mu)=\mu+2\rho_c-\dfrac{1}{\mathrm{card}(W_{\mu+2\rho_c})}\sum_{w\in W_{\mu+2\rho_c}}w\cdot \rho^{(\widetilde{w})},\]
 where $W_{\mu+2\rho_c}$ is the stabilizer of $\mu+2\rho_c$ in $W(\frg,\frt)$.
 \end{lemma}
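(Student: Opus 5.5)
We will show that the vector $\lambda:=\mu+2\rho_c-\bar\rho$, where
$$\bar\rho:=\frac{1}{\mathrm{card}(W_{\mu+2\rho_c})}\sum_{w\in W_{\mu+2\rho_c}}w\cdot\rho^{(\widetilde w)},$$
lies in $\caC_{\widetilde w}$ and is the point of $\caC_{\widetilde w}$ closest to $x:=\mu+2\rho_c-\rho^{(\widetilde w)}$. This is exactly the definition of $P(\mu+2\rho_c-\rho^{(\widetilde w)})=\lambda_a(\mu)$, and we may use the stated independence of the choice of $\widetilde w$. Since $\caC_{\widetilde w}$ is a closed convex cone, it suffices to verify the obtuse-angle (variational) criterion. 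That is, $\lambda\in\caC_{\widetilde w}$ and $\langle x-\lambda, c-\lambda\rangle\le 0$ for all $c\in\caC_{\widetilde w}$. Equivalently, $\langle x-\lambda,\lambda\rangle=0$ and $x-\lambda$ lies in the dual cone $-\caC_{\widetilde w}^{\vee}$, the cone spanned by the negatives of the positive roots of $\widetilde w\Delta^+(\frg,\frt)$.

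\textbf{Step 1 (structure of the stabilizer).} Since $\mu+2\rho_c\in\caC_{\widetilde w}$, Chevalley's lemma gives that $W_{\mu+2\rho_c}$ is the parabolic subgroup generated by reflections in the simple roots of $\widetilde w\Delta^+(\frg,\frt)$ orthogonal to $\mu+2\rho_c$. Call this set of simple roots $S$, and let $W_S$ be the group it generates. In coordinates for $Sp(p,q)$, $W_S$ permutes blocks of equal entries (up to the arrangement \eqref{wgt-1}) and changes signs of entries equal to $0$. Note that $\mu+2\rho_c$ has all $\frk$-coordinates strictly decreasing and positive, so only roots in $\widetilde w\Delta^+(\frp,\frt)$ can lie in $S$.

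\textbf{Step 2 (averaging is orthogonal projection).} The average $\bar\rho$ of the $W_S$-orbit of $\rho^{(\widetilde w)}$ is the orthogonal projection of $\rho^{(\widetilde w)}$ onto the fixed space $(\mathrm{span}\,S)^\perp$. Hence $\rho^{(\widetilde w)}-\bar\rho\in\mathrm{span}\,S$. More precisely, $\rho^{(\widetilde w)}-\bar\rho$ is a nonnegative combination of $S$: it is a convex combination of the vectors $\rho^{(\widetilde w)}-w\rho^{(\widetilde w)}$, and each of these is a sum of positive roots of the root subsystem $\Delta_S$, hence a nonnegative combination of $S$. It follows that $x-\lambda=\bar\rho-\rho^{(\widetilde w)}\in -\mathbb{R}_{\ge0}S\subset -\caC_{\widetilde w}^\vee$. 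Moreover, $\lambda=(\mu+2\rho_c)-\bar\rho$ lies in $(\mathrm{span}\,S)^\perp$, because $\mu+2\rho_c$ is orthogonal to $S$ by construction and $\bar\rho$ is $W_S$-fixed. Therefore $\langle x-\lambda,\lambda\rangle=0$.

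\textbf{Step 3 (membership $\lambda\in\caC_{\widetilde w}$).} This is the step we expect to be the main obstacle. We must check $\langle\lambda,\alpha\rangle\ge0$ for every simple root $\alpha$ of $\widetilde w\Delta^+(\frg,\frt)$. For $\alpha\in S$ this is an equality by Step 2. For $\alpha\notin S$ we have $\langle\mu+2\rho_c,\alpha^\vee\rangle\ge1$, since the coordinates are integers, $\mu+2\rho_c$ is dominant, and it is not orthogonal to $\alpha$. So it suffices to show $\langle\bar\rho,\alpha^\vee\rangle\le 1$ for such $\alpha$.

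To prove this, we compute $\bar\rho$ explicitly. On each block of $S$ of type $A$, $\bar\rho$ replaces the consecutive entries of $\rho^{(\widetilde w)}$ by their mean. On a block containing the short or long root of type $C$, namely the zero entries, the averaging sends those coordinates to $0$. A neighbouring simple root $\alpha\notin S$ either is $e_s-e_{s+1}$ joining two adjacent blocks, or is the last root $2e_n$ not adjacent to a type-$C$ block. In the first case $\langle\bar\rho,\alpha^\vee\rangle$ is a difference of block means of a strictly decreasing integer sequence with consecutive values, which equals $\tfrac{\ell_1+\ell_2}{2}$ where $\ell_1,\ell_2$ are the block lengths. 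This can exceed $1$, so a direct comparison fails.

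The fix is to strengthen the bound on $\mu+2\rho_c$ as well. If two adjacent entries of $\mu+2\rho_c$ lie in different blocks of sizes $\ell_1,\ell_2$, we use that entries within the same $\frk$-factor differ by at least $2$ (because of $2\rho_c$). Combining this with the interleaving pattern of the $\frp$-coordinates, we aim to show that $\langle\mu+2\rho_c,\alpha^\vee\rangle\ge \tfrac{\ell_1+\ell_2}{2}$. We plan to handle this case by case, according to how the $\frk$-indices $1,\dots,p$ and $p+1,\dots,n$ are distributed within the two blocks. Once Step 3 is done, the uniqueness of the closest point gives $\lambda=\lambda_a(\mu)$.
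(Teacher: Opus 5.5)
Your framework is sound and follows essentially the paper's route. The obtuse-angle (Moreau) criterion replaces the paper's appeal to Proposition 1.7 and Corollary 1.9(b) of \cite{SV}. Your Step 2 is the paper's statement that $\gamma-\gamma_0$ is a nonnegative combination of $\Pi_0$, and your Step 3 is exactly the paper's claim that $v-\gamma_0\in C_{\widetilde w}$.

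The gap is that Step 3 is only announced (``we aim to show \dots\ case by case''). Integrality alone does not give $\langle\mu+2\rho_c,\alpha^\vee\rangle\ge\frac{\ell_1+\ell_2}{2}$ for arbitrary block lengths. The missing idea, which the paper uses, is that every value occurs at most twice in $\mu+2\rho_c$. Writing $\mu+2\rho_c=(x_1,\dots,x_p\mid y_1,\dots,y_q)$, you have $x_i-x_{i+1}=a_i-a_{i+1}+2\ge 2$ and likewise $y_j-y_{j+1}\ge 2$. So a value occurs at most once among the $x$'s and at most once among the $y$'s. Your own remark in Step 1, that only noncompact roots lie in $S$, already forces every block to have length $\ell\le 2$: a block of length $3$ would contain two indices from the same $\frk$-factor.

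With this, Step 3 closes in a few lines.
\begin{itemize}
\item If $\ell_1=\ell_2=1$, then $\langle\bar\rho,\alpha^\vee\rangle=1\le\langle\mu+2\rho_c,\alpha^\vee\rangle$ by integrality.
\item If one of the two blocks is a pair $x_i=y_j=c$, the entry on the other side of $\alpha$ is some $x_{i'}$ with $i'\neq i$ or some $y_{j'}$ with $j'\neq j$. It therefore lies at distance at least $2$ from $c$, so $\langle\mu+2\rho_c,\alpha^\vee\rangle\ge 2\ge\frac{\ell_1+\ell_2}{2}$.
\item You must also treat the long simple root $\widetilde w(2e_n)=2e_k$, with $(2e_k)^\vee=e_k$. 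Your reduction ``it suffices to show $\le 1$'' fails here as well, since the last entry of $\bar\rho$ is $1$ or $\frac32$. However, the corresponding entry of $\mu+2\rho_c$ is $x_p$ or $y_q$, which is at least $2$.
\end{itemize}

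You were right that $\langle\bar\rho,\alpha^\vee\rangle$ can equal $\frac32$ or $2$ next to a repeated value. The paper's write-up asserts this coroot value is $1$ whenever $\langle\bar v,\alpha\rangle>0$, which overlooks those cases. The same ``distance at least $2$ from a repeated value'' observation is what makes that step correct there too.
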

 \begin{proof}
Let $\Pi$ be the set of simple roots for $\Delta^+(\frg,\frt)$. Then
 	 \[\Pi=\{e_i-e_{i+1}\mid\mbox{$1\le i\le n-1$}\}\cup\{2e_n\}.\]
Write $\Delta^+=\widetilde{w}\Delta^+(\frg,\frt)$ and $W=W(\frg,\frt)$. Note that $\Delta^+$ has simple roots $\widetilde{w}\Pi$.

Put $v=\mu+2\rho_c$ and $\gamma=\rho^{(\widetilde{w})}$. Set
 	\[
 	\gamma_0=\dfrac{1}{\mathrm{card}(W_v)}\sum_{w\in W_v}w\cdot \gamma.
 	\]
 	Let $\Delta_0$ collect all the the roots which are orthogonal to $v$. Then $\Delta_0$ is a root system. Put $\Delta_0^+:=\Delta_0\cap \Delta^+$. If $\alpha\in\Delta_0$, then the above definition shows that $s_\alpha(\gamma_0)=\gamma_0$, where $s_\alpha$ is the simple reflection corresponding to $\alpha$. Hence $\langle\gamma_0,\alpha\rangle=0$. Therefore $\gamma_0$ is $\Delta_0^+$-dominant.
 	
 	 Note that $\gamma_0$ lies in the convex hull of the $W_v$-orbit of $\gamma$. Applying Proposition 1.7 of \cite{SV} to $\Delta_0$ and $\gamma_0$, we obtain that
 	 \[\gamma-\gamma_0=\sum_{\alpha\in\Pi_0}k_\alpha\alpha,\]
 where $\Pi_0$ is the set of simple roots for $\Delta_0^+$ and $k_\alpha\ge 0$. Note that $\Pi_0=\widetilde{w}\Pi\cap \Delta_0$.
  For any simple root $\beta'$ of $\Delta^+$ which is not in $\Delta_0^+$, we have
 	 \[\langle\gamma_0, \beta'\rangle=\langle\gamma, \beta' \rangle-\sum_{\alpha\in\Pi_0}k_\alpha \langle\alpha, \beta'\rangle.\]
 Note that  $\langle \alpha, \beta'\rangle\leq 0$. Thus $\langle \gamma_0,\beta' \rangle\ge 0$. Hence $\gamma_0\in C_{\widetilde{w}}$.
 	
 	 Observe that if $\sigma\in W_v$, then $\sigma(v-\gamma_0)=v-\gamma_0$. Set $v_0=v-\gamma_0$; this shows that
 	 \[
 	 W_v\subset W_{v_0}.
 	 \]
 	 Therefore the fact that $\gamma_0$ lies in the convex hull of the $W_v$-orbit of $\gamma$ implies that $\gamma_0$ lies in the convex hull of the $W_{v_0}$-orbit of $\gamma$.
 	
 	 Finally, we \emph{claim} that for $\frg_0=\mathfrak{sp}(p,q)$, the element $v_0=v-\gamma_0\in C_{\widetilde{w}}$. Indeed, let $\overline{v}=\big(\widetilde{w}\big)^{-1}v$; then $\overline{v}$ is in fact the rearrangement of $v$ in decreasing order. Rewrite
 	 	\[\gamma_0=\dfrac{1}{\mathrm{card}(W_v)}\sum_{w\in W_v}w\cdot \gamma=\widetilde{w}\cdot\dfrac{1}{\mathrm{card}(W_v)}\sum_{w\in W_v}\big(\widetilde{w}\big)^{-1}w\widetilde{w}\cdot\rho.\]
 	 Observe that
 	 \[\big(\widetilde{w}\big)^{-1}W_v\widetilde{w}=W_{\overline{v}},\]
 	 Thus
 	 \[\overline{\gamma_0}:=\big(\widetilde{w}\big)^{-1}\cdot\gamma_0=\dfrac{1}{\mathrm{card}(W_{\overline{v}})}\sum_{w\in W_{\overline{v}}}w\cdot\rho.\]
 	 To verify that $v_0$ is $\Delta^+$-dominant, it suffices to show that for every simple root $\alpha$ of $\Delta^+(\frg,\frt)$,
 	 	\[\langle v_0,\widetilde{w}\cdot\alpha\rangle=\langle \widetilde{w}^{-1}\cdot v-\widetilde{w}^{-1}\cdot \gamma_0,\alpha\rangle=\langle \overline{v}-\overline{\gamma_0},\alpha\rangle\ge 0.\]
 	
 	 Now we show that  any number appears at most twice in $\overline{v}$. Indeed, this holds for $v=\mu+2\rho_c$ and thus for $\bar{v}$ as well. Therefore, for each pair of indices $i,i+1$ corresponding to a repeated component in $\overline{v}$, the $i$-th and $(i+1)$-th components of $\overline{\gamma_0}$ are the averages of the corresponding components of $\rho$, while all other components are identical to those of $\rho$. Hence we find that whenever $\langle\overline{v}, \alpha\rangle=0$, we necessarily have $\langle\overline{\gamma_0}, \alpha\rangle=0$. On the other hand, when $\langle\overline{v}, \alpha\rangle>0$, we have
 	 \[\dfrac{2\langle \overline{v},\alpha\rangle}{\langle\alpha,\alpha\rangle}\ge 1=\dfrac{2\langle\overline{\gamma_0},\alpha\rangle}{\langle\alpha,\alpha\rangle}\quad \mathrm{or}\quad \dfrac{2\langle \overline{v},2e_n \rangle}{\langle 2e_n,2e_n \rangle}\ge 2>\dfrac{3}{2}\ge \dfrac{\langle\overline{\gamma_0},2e_n\rangle}{\langle 2e_n,2e_n\rangle}.\]
 	 Thus $\langle\overline{v}-\overline{\gamma_0},\alpha\rangle\ge 0$ and $v_0\in C_{\widetilde{w}}$. Therefore, the claim holds. Now the conclusion follows from applying Corollary 1.9 (b) of \cite{SV} to $v_0$ and $\gamma$.
 \end{proof}

 The above lemma was inspired by Wong \cite{W24} in the $SU(p, q)$ case. Now let us move on to give an explicit formula for the lambda norm.
 Let $\mu=(a_1,\dots,a_p\mid b_1,\dots,b_q)$ be a $\Delta^+(\frk,\frt)$-dominant weight. For convenience, put
 \begin{equation}\label{L-mu}
 L(\mu)=\Vert\mu\Vert_{\mathrm{lambda}}^2.
 \end{equation}
 \begin{lemma}\label{lambda-2}
 	Let $G=Sp(p,q)$ and take $\mu$ as in \eqref{mu}. Choose $w\in W(\frg,\frt)^1$ such that $\mu+2\rho_c \in C_{w}$ and write $\mu+2\rho_c=(x_1,\dots,x_p\mid y_1,\dots,y_q)$.
 	Then
 	\begin{equation}\label{L-0}
 		L(\mu)=\Vert \mu+2\rho_c-\rho^{(w)}\Vert^2-\dfrac{e(\mu)}{2},
 	\end{equation}
 	where
 	\[e(\mu):=\mathrm{card}\{(i,j)\mid 1\le i\le p,1\le j\le q,x_i=y_j\}.\]
Specifically,
\begin{equation}\label{L-1}
 	\begin{split}
 		L(\mu)=\sum_{i=1}^p&(x_i^2-2(n-i+1)x_i)+\sum_{j=1}^q(y_j^2-2(n-j+1)y_j)\\
 		&+\sum_{s=1}^ns^2+2\sum_{i=1}^p\sum_{j=1}^q\min\{x_i,y_j\}-\dfrac{e(\mu)}{2}.
 	\end{split}
 \end{equation}
 \end{lemma}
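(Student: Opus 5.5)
The plan is to get \eqref{L-0} from Lemma \ref{lambda-1} by an orthogonal-projection argument, and then to get \eqref{L-1} by writing $\rho^{(w)}$ out explicitly. Put $v=\mu+2\rho_c$, $\gamma=\rho^{(w)}$, and $\gamma_0=\frac{1}{\mathrm{card}(W_v)}\sum_{\sigma\in W_v}\sigma\cdot\gamma$. By Lemma \ref{lambda-1}, $\lambda_a(\mu)=v-\gamma_0$.

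\emph{Step 1: the projection identity.} The averaging operator over the finite group $W_v$ is the orthogonal projection onto the fixed subspace $(\frt^*)^{W_v}$, and $v$ lies in that subspace. Hence $v-\gamma_0$ is the orthogonal projection of $v-\gamma$, so
$$\Vert v-\gamma\Vert^2=\Vert v-\gamma_0\Vert^2+\Vert \gamma-\gamma_0\Vert^2 .$$
It therefore suffices to show $\Vert\gamma-\gamma_0\Vert^2=e(\mu)/2$.

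\emph{Step 2: compute $W_v$ and $\Vert\gamma-\gamma_0\Vert^2$.} The entries of $v$ have the following properties.
\begin{itemize}
\item All entries are $\ge 2$, so no sign change fixes $v$.
\item The $x_i$ are strictly decreasing, and so are the $y_j$, because $\mu$ is $\Delta^+(\frk,\frt)$-dominant and $2\rho_c$ is strictly decreasing.
\item Consequently each value occurs at most twice, namely as a pair $x_i=y_j$.
\end{itemize}
So $W_v$ is generated by the commuting reflections $s_{e_i-e_{p+j}}$ over the $e(\mu)$ coincident pairs, and it is isomorphic to $(\bbZ/2)^{e(\mu)}$. Passing to $\overline v=\widetilde w^{-1}v$ as in the proof of Lemma \ref{lambda-1}, each coincident pair occupies adjacent positions $t,t+1$. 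The entries of $\rho$ there are $n-t+1$ and $n-t$, which differ by $1$. Averaging replaces both by their mean, so each pair contributes $2\cdot(1/2)^2=1/2$. This gives $\Vert\gamma-\gamma_0\Vert^2=e(\mu)/2$ and proves \eqref{L-0}.

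\emph{Step 3: the explicit formula.} Since $\gamma=w\rho$ and $v\in\caC_w$, the quantity $\Vert v-\gamma\Vert^2=\Vert\overline v-\rho\Vert^2$ pairs the $t$-th largest entry of $v$ with the value $n-t+1$. Expanding the square gives $\sum x_i^2+\sum y_j^2+\sum_{s=1}^n s^2-2\langle v,\gamma\rangle$. Break ties by placing $x_i$ before $y_j$ when $x_i=y_j$; any tie-breaking consistent with $v\in\caC_w$ gives the same norm, since the swapped $\rho$ entries are multiplied by equal values. With this convention the $\gamma$-value of $x_i$ is $n+1-i-\#\{j: y_j>x_i\}$, and that of $y_j$ is $n+1-j-\#\{i: x_i\ge y_j\}$. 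Substituting into $-2\langle v,\gamma\rangle$, the correction terms combine as
$$2\sum_i x_i\,\#\{j:y_j>x_i\}+2\sum_j y_j\,\#\{i:x_i\ge y_j\}=2\sum_{i,j}\min\{x_i,y_j\}.$$
Together with \eqref{L-0} this yields \eqref{L-1}.

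The main obstacle is the bookkeeping in Step 3: determining the positions correctly when ties occur, and checking that the two count terms recombine exactly into $\sum_{i,j}\min\{x_i,y_j\}$ with nothing double-counted.
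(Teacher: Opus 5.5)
Your proposal is correct and takes essentially the paper's approach. Your projection/Pythagoras step is the global form of the paper's per-pair identity $(c-(n-t+1))^2+(c-(n-t))^2=2\bigl(c-(n-t+\tfrac12)\bigr)^2+\tfrac12$, and your rank count with ties broken is an equivalent bookkeeping of the paper's expansion of $\sum_t(n-t+1)v_t$ through $\sum_{k<l}\max\{v_k,v_l\}$ and $\max\{x_i,y_j\}=x_i+y_j-\min\{x_i,y_j\}$.
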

 \begin{proof}
 	Let $\rho^{(w)}=(z_1,\dots,z_n)$ and
 	\[
 	u=(u_1,\dots,u_n):=\dfrac{1}{\mathrm{card}(W_{\mu+2\rho_c})}\sum_{w\in W_{\mu+2\rho_c}}w\cdot \rho^{(w)}.
 	\]
 	Suppose the number of elements in $W_{\mu+2\rho_c}$ that only permute two indices is $k$. Then
 	\[
 	\mathrm{card}(W_{\mu+2\rho_c})=\sum_{i=0}^k \dbinom{k}{i}=2^k.
 	\]
 	For indices $i$ and $j$ where the components of $\mu+2\rho_c$ are equal, the number of elements in $W_{\mu+2\rho_c}$ that permute $i$ and $j$ is $\sum\limits_{i=0}^{k-1}\dbinom{k-1}{i}=2^{k-1}$. We have
 	\[
 	u_i=u_j=\dfrac{2^{n-1}z_i+2^{n-1}z_j}{2^n}=\dfrac{z_i+z_j}{2}.
 	\]
 	This shows that $u$ is the averaging over $\rho^{(w)}$ the corresponding components of the two indices that have the same components in $\mu+2\rho_c$. We call this operation \emph{the averaging} of $\rho^{(w)}$, and the operation of averaging the corresponding components of a pair of indices is called \emph{an elementary averaging} of $\rho^{(w)}$.
 	
 	Arrange the components of $\mu+2\rho_c=(x_1,\dots,x_p\mid y_1,\dots,y_q)$ in decreasing order as $v_1\ge\cdots\ge v_n$. We claim that one elementary averaging of $\rho^{(w)}$ decreases the value of $\Vert\mu+2\rho_c-\rho^{(w)}\Vert^2$ by $1/2$.
 	
 	Indeed, if $v_t=v_{t+1}=c$, then the corresponding coordinates in $u$ are $n-t+\dfrac{1}{2}$. Thus their contribution after one averaging is
 	\[
 	2\left(c-(n-t+\dfrac{1}{2})\right)^2.
 	\]
 	Their corresponding coordinates in $\rho^{(w)}$ are $m-t+1$ and $m-t$, respectively. Hence their contribution before averaging is
 	\[
 	\big(c-(n-t+1)\big)^2+\big(c-(n-t)\big)^2=2\left(c-(n-t+\dfrac{1}{2})\right)^2+\dfrac{1}{2}.
 	\]
 	This shows that after one elementary averaging of $\rho^{(w)}$, the value of $\Vert\mu+2\rho_c-\rho^{(w)}\Vert^2$ decreases by $1/2$. Notice that $e(\mu)$ is the total number of elementary averagings, we obtain \eqref{L-0}.
 	
 	Note that when $k<l$, we have $\max\{v_k,v_l\}=v_k$. Hence
 	\[
 	\sum_{t=1}^n(n-t+1)v_t=\sum_{t=1}^n v_t+\sum_{1\le k<l\le n}\max\{v_k,v_l\}.
 	\]
 	Since
 	\[
 	\max\{x_i,y_j\}=x_i+y_j-\min\{x_i,y_j\},
 	\]
 	we get
 	\begin{align*}
 		\sum_{1\le t<r\le n}\max\{v_t,v_r\}
 		&=\sum_{i=1}^p(p-i)x_i+\sum_{j=1}^q(q-j)y_j+\sum_{i=1}^p\sum_{j=1}^q\max\{x_i,y_j\}\\
 		&=\sum_{i=1}^p(n-i)x_i+\sum_{j=1}^q(n-j)y_j-\sum_{i=1}^p\sum_{j=1}^q\min\{x_i,y_j\}.
 	\end{align*}
 	Therefore
 	\[
 	\sum_{t=1}^n(n-t+1)v_t=\sum_{i=1}^p(n-i+1)x_i+\sum_{j=1}^q(n-j+1)y_j-\sum_{i=1}^p\sum_{j=1}^q\min\{x_i,y_j\}.
 	\]
 	We obtain \eqref{L-1}.
 \end{proof}
 Set
\begin{equation}\label{N-mu}
 	N(\mu)=\sum_{i=1}^p(x_i^2-2(n-i+1)x_i)+\sum_{j=1}^q(y_j^2-2(n-j+1)y_j)
\end{equation}
and
\begin{equation}\label{M-mu}
M(\mu)=2\sum_{i=1}^p\sum_{j=1}^q\min\{x_i,y_j\}.
\end{equation}
Then \eqref{L-1} becomes
 \begin{equation}\label{L-N-M}
L(\mu)=N(\mu)+M(\mu)-\dfrac{e(\mu)}{2}+\sum_{s=1}^n s^2.
\end{equation}

Sometimes, it will be convenient to study $M(\mu)$ and $-\dfrac{e(\mu)}{2}$ together. For this purpose, define
\begin{equation}\label{function-h}
h(x)=\left\{
 	\begin{array}{ll}
 		-x, & \text{ if } x\le -1, \\
 		-\frac{1}{2}(x-1), & \text{ if } -1<x \leq 0,\\
 		\frac{1}{2}(x+1), &\text{ if } 0<x\leq 1,\\
 		x, &\text{ if } x\ge1.
\end{array}
\right.
\end{equation}
Then $h(d)=|d|+\dfrac{1}{2}\delta_{0, d}$ for any $d\in\bbZ$. Then for each pair of indices $i$ and $j$,
 	\[2\min\{x_i,y_j\}-\dfrac{1}{2}\delta_{x_i, y_j}=(x_i+y_j)-h(x_i-y_j).\]
Therefore,
\begin{equation}\label{M-mu-e-mu}
M(\mu)-\dfrac{e(\mu)}{2}=\sum_{i=1}^{p} \sum_{j=1}^{q} \left( (x_i+y_j)-h(x_i-y_j)\right).
\end{equation}

 \begin{prop} \emph{(Monotonicity of the lambda norm)} \label{lambda-up}
 	Let $\mu$ be $\Delta^+(\frk,\frt)$-dominant weight. Then
 	\[L(\mu)<L(\mu+e_t),\quad 1\le t\le n.\]
 \end{prop}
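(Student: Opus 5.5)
We take $\mu+e_t$ to be $\Delta^+(\frk,\frt)$-dominant; otherwise $L(\mu+e_t)$ is not defined. The plan is to compute the increment $L(\mu+e_t)-L(\mu)$ directly from the closed formula \eqref{L-N-M}, rewritten with \eqref{M-mu-e-mu}. We use that \eqref{L-1} depends only on the multisets of coordinates $x_i$ and $y_j$ of $\mu+2\rho_c$, so the choice of $w$ plays no role. By the symmetry of the formula in the roles of $(x,p)$ and $(y,q)$, it suffices to treat $t=i\le p$, that is, replacing $x_i$ by $x_i+1$ and leaving every other coordinate unchanged.

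\textbf{Step 1 (exact increment).} The term $N(\mu)$ changes by $2x_i+1-2(n-i+1)$. By \eqref{M-mu-e-mu}, the pair $(i,j)$ changes by $1-\bigl(h(d_j+1)-h(d_j)\bigr)$, where $d_j=x_i-y_j\in\bbZ$. From \eqref{function-h}, the integer increments of $h$ are
\begin{itemize}
\item $-1$ for $d\le -2$,
\item $-\tfrac12$ for $d=-1$,
\item $\tfrac12$ for $d=0$,
\item $1$ for $d\ge 1$.
\end{itemize}
Hence the pair contribution $c_j$ is $2$ if $y_j\ge x_i+2$, $\tfrac32$ if $y_j=x_i+1$, $\tfrac12$ if $y_j=x_i$, and $0$ if $y_j<x_i$. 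Let $m:=\mathrm{card}\{j\mid y_j>x_i\}$. Then
\[
L(\mu+e_i)-L(\mu)=2x_i+1-2(n-i+1)+\sum_j c_j\ \ge\ 2x_i+2m-2(n-i+1)+\tfrac12 .
\]
The last inequality holds because $y$ is strictly decreasing, so at most one $j$ has $y_j=x_i+1$, and that $j$ loses only $\tfrac12$ relative to the value $2$.

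\textbf{Step 2 (lower bound for $x_i$).} This is the key step. We must show $x_i\ge n-i+1-m$, and we do it by averaging two separate bounds.
\begin{itemize}
\item Since $x=a+2(p,\dots,1)$ with $a$ dominant and nonnegative, $x_i\ge 2(p-i+1)$.
\item Since $y=b+2(q,\dots,1)$, we have $y_{j}-y_{j+1}\ge 2$ and $y_q\ge 2$, so $y_{m+1}\ge 2(q-m)$. By the definition of $m$, $y_{m+1}\le x_i$ when $m<q$, so $x_i\ge 2(q-m)$; this bound is trivial when $m=q$.
\end{itemize}
Averaging the two gives $x_i\ge (p-i+1)+(q-m)=n-i+1-m$. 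Plugging this into Step 1 yields $L(\mu+e_i)-L(\mu)\ge \tfrac12>0$.

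\textbf{Step 3 (the $y$-coordinates).} The case $t=p+j$ is identical with $x$ and $y$ swapped. The $N$-increment is $2y_j+1-2(n-j+1)$, and the pair contributions are computed in the same way using $h(-d)$. The bounds $y_j\ge 2(q-j+1)$ and $y_j\ge 2(p-m')$, with $m'=\mathrm{card}\{i\mid x_i>y_j\}$, again average to $y_j\ge n-j+1-m'$.

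The main point to verify carefully is the bookkeeping of the boundary cases $d=0,-1$ in $h$. If the worst-case losses were not confined to a single index, the margin $\tfrac12$ could be destroyed. The strict spacing by $2$ of the coordinates of $\mu+2\rho_c$ is what makes that index unique.
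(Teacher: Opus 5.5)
Your proof is correct, and it is the paper's estimate written in different bookkeeping. The paper works with \eqref{L-0}. It picks $w$ with $\mu+2\rho_c\in\caC_w$ and notes that the components of $\mu+2\rho_c-\rho^{(w)}$ are nonnegative, so raising one coordinate by $1$ raises $\Vert\mu+2\rho_c-\rho^{(w)}\Vert^2$ by at least $1$. It then uses $|e(\mu+e_t)-e(\mu)|\le 1$ to get a net increase of at least $\tfrac12$.

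You instead work with the chamber-free expansion \eqref{L-1} and \eqref{M-mu-e-mu}, accounting pair by pair through $h$. The two computations match term for term:
\begin{itemize}
\item Place $x_i$ before any tied $y_j$; it then sits at position $i+m$ of the decreasing rearrangement. Your quantity $2x_i+1-2(n-i+1)+2m$ is then exactly the paper's increase $2z+1$, where $z=x_i-(n-i+1-m)$ is the corresponding coordinate of $\mu+2\rho_c-\rho^{(w)}$.
\item Your Step 2 is a proof of the nonnegativity $z\ge 0$, which the paper simply calls clear.
\item Your $-\tfrac12$ (for $y_j=x_i+1$) and $+\tfrac12$ (for $y_j=x_i$) are precisely the change in $-e/2$.
\end{itemize}

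What your route buys is self-containedness. You never need $\mu+2\rho_c$ and $\mu+e_t+2\rho_c$ to lie in a common chamber $\caC_w$. The paper's three-line argument uses this implicitly; it is true, since the $y_j$ are spaced by at least $2$, so at most one lies in $\{x_i,x_i+1\}$. Your averaging of $x_i\ge 2(p-i+1)$ and $x_i\ge 2(q-m)$ also gives the missing justification of nonnegativity. The paper's route is shorter and shows the mechanism more plainly: a sum of squares with nonnegative bases, corrected by at most $\tfrac12$.
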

 \begin{proof}
 	Write $\mu+2\rho_c=(x_1,\dots,x_p\mid y_1,\dots,y_q)$. Choose $w\in\Delta(\frg,\frt)^1$ such that $\mu+2\rho_c\in C_w$. Clearly, the components of $\mu+2\rho_c-\rho^{(w)}$ are nonnegative. Increasing some $x_i$ or $y_j$ by $1$ yields $\mu^\prime$. Then \eqref{L-0} and $|e(\mu)-e(\mu^\prime)|\le 1$ shows that
 \[L(\mu^\prime)-L(\mu)\ge 1-\dfrac{1}{2}|e(\mu^\prime)-e(\mu)|\ge \dfrac{1}{2}.\]
 \end{proof}

Finally, to conclude this section, let us compute the lambda norm of $\texttt{triv}$, which stands for be (the highest weight of) the trivial $K$-type. Choose
 $$
 \rho^{(w)}=(2p-1, 2p-3, \dots, 1\mid \underbrace{p+q, p+q-1, \dots, 2p+1}_{q-p}, 2p, 2p-2, 2p-4, \dots, 2),
 $$
 then $\texttt{triv}+2\rho_c\in \mathcal{C}_w$. Lemma \ref{lambda-1} shows that
 \[P(\texttt{triv}+2\rho_c-\rho^{(w)})=2\rho_c-\lambda_{\texttt{triv}},\]
 where
 	\begin{align*}
 	&\lambda_{\texttt{triv}}=\sum_{w\in W_{2\rho_c}}w\cdot\rho^{(w)}=\\
 	&\bigg(2p-\dfrac{1}{2},2(p-1)-\dfrac{1}{2},\cdots,\dfrac{3}{2}\ \bigg|\  \underbrace{p+q,p+q-1,\dots,2p+1}_{q-p},2p-\dfrac{1}{2},2(p-1)-\dfrac{1}{2},\cdots,\dfrac{3}{2}\bigg).
 \end{align*}
 Then
 \[P(\texttt{triv}+2\rho_c-\rho^{(w)})=\left(\dfrac{1}{2},\dots,\dfrac{1}{2}\ \bigg|\  \underbrace{q-p,q-p-1,\dots,1}_{q-p},\dfrac{1}{2},\dots,\dfrac{1}{2}\right).\]
 We compute that
 \begin{equation}\label{P0}
 	L(\texttt{triv})=\Vert P(\texttt{triv}+2\rho_c-\rho^{(w)})\Vert^2=\sum_{s=1}^{q-p}s^2+\dfrac{p}{2}.
 \end{equation}
In particular,
\begin{equation}\label{pbeta1}
	B_0+L(\texttt{triv})=\sum_{s=1}^{n-1}s^2+(|p-q+1|+1)^2.
\end{equation}

Using the techniques from \cite{DY},  we  find that
 $$
 \min_{m\in\bbZ_{\ge 0}}\Vert \texttt{triv}+m\beta\Vert_{\mathrm{spin}}=\Vert p\beta\Vert_{\mathrm{spin}}.
 $$
Moreover, the spin norm of $p\beta$ is attained at $(q-1, \dots, q-1 \mid p, 0, \dots, 0)\in\Omega_{p, q}$. Note that  $\|p\beta\|_{\rm spin}^2$ is exactly the right hand side of \eqref{pbeta1}. Thus
\begin{equation}\label{B0-explanation}
B_0=B(\texttt{triv}).
\end{equation}

 When $q=p+1$, we point out that
 $$
 \min_{m\in\bbZ_{\ge 0}}\Vert e_1+m\beta\Vert_{\mathrm{spin}}=\Vert e_1+p\beta\Vert_{\mathrm{spin}}.
 $$
Moreover, the spin norm of $e_1+p\beta$ is attained at $(q-1,\dots,q-1\mid p,0,\dots,0)$. A similar computation shows that
 \[B(e_1)=B_0+\dfrac{3}{2}.\]

\section{Convexity of the function $D_w(\mu)$}\label{sec-convexity}
In this section, let $(a\mid b)=(a_1,\dots,a_p\mid b_1, \dots, b_q)$ be $\Delta^+(\frk,\frt)$-dominant weight. Fix any element $\rho_n^{(w)}\in \Omega_{p, q}$ as in \eqref{rho-n-w}.

In this paper, we will frequently analyze the following difference
\begin{equation}\label{D-w-mu}
D_{w}(\mu) :=\Vert \{\mu-\rho_n^{(w)}\} +\rho_c \Vert^2-\|\mu\|_{\rm lambda}^2
\end{equation}
As a preparation, we show that $D_{w}(\mu)$ has certain convexity.

 \begin{prop}\label{Jensen}
 	 We have
 	\begin{itemize} 		
 	\item[(i)] Fix $b$. If $a=\sum_ts_ta^{(t)}$, where $s_t\ge 0$,\ $\sum_ts_t=1$ and $a^{(t)}$ are $p$-dimensional integral vectors with components in decreasing order, then 	
 \[D_w(a\mid b)\le\sum_t s_t D_w(a^{(t)} \mid b ).\]
 \item[(ii)] Fix $a$. If $b=\sum_ts_tb^{(t)}$, where $s_t\ge 0$,\ $\sum_ts_t=1$ and $b^{(t)}$ are $q$-dimensional integral vectors with components in decreasing order, then
     \[D_w(a\mid b)\le\sum_t s_t D_w(a \mid b^{(t)} ).\]
 	\end{itemize}
 \end{prop}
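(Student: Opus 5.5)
The plan is to write $D_w(a\mid b)$, with $b$ fixed, as a convex function of $a$ plus a nonnegative defect that vanishes at integral points, and then apply the elementary Jensen inequality. Part (ii) is symmetric, with the roles of $(a,p,K_w)$ and $(b,q,R_w)$ exchanged, so I only treat (i). Throughout, $\|\mu\|_{\rm lambda}^2$ at a non-integral decreasing $a$ is understood through the explicit formula \eqref{L-1} (i.e.\ \eqref{L-N-M}), with $x_i=a_i+2(p-i+1)$ and $y_j=b_j+2(q-j+1)$. By \eqref{k-norm},
\[
D_w(a\mid b)=\Phi_p(a_1-k_1,\dots,a_p-k_p)+\Phi_q(b_1-r_1,\dots,b_q-r_q)-L(a\mid b).
\]
The second summand does not depend on $a$, so it can be ignored.

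\emph{Step 1: replace $L$ by a concave minorant-type expression.} Using \eqref{L-N-M} and \eqref{M-mu-e-mu}, define
\[
\widetilde L(a\mid b):=N(\mu)+\sum_{i,j}\big((x_i+y_j)-h(x_i-y_j)\big)+\sum_{s=1}^n s^2 .
\]
For every real $d$ we have $h(d)\ge |d|+\tfrac12\delta_{0,d}$, with equality whenever $d\in\bbZ$. Hence $L\ge \widetilde L$ everywhere, and $L=\widetilde L$ at integral $a$; in particular this holds at each $a^{(t)}$. Set $\widetilde D_w:=\Phi_p(a-K_w)+\Phi_q(b-R_w)-\widetilde L$. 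Then $D_w\le \widetilde D_w$ everywhere, and $D_w(a^{(t)}\mid b)=\widetilde D_w(a^{(t)}\mid b)$. It therefore suffices to prove that $a\mapsto\widetilde D_w(a\mid b)$ is convex on $\mathbb{R}^p$, because then
\[
D_w(a\mid b)\le\widetilde D_w(a\mid b)\le\sum_t s_t\widetilde D_w(a^{(t)}\mid b)=\sum_t s_tD_w(a^{(t)}\mid b).
\]

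\emph{Step 2: convexity of $\widetilde D_w$ in $a$.} Put $u=a-K_w$ and expand
\[
\Phi_p(u)=\sum_i u_i^2+2\sum_{t}(p-t+1)u_{(t)}+\text{const}.
\]
The middle term equals $2\sum_{l=1}^p S_l(u)$, where $S_l(u)$ is the sum of the $l$ largest of $|u_1|,\dots,|u_p|$. Each $S_l$ is a maximum of linear functionals $\sum_{i\in I}\pm u_i$ with $|I|=l$, hence convex, and it stays convex after composing with the affine map $a\mapsto a-K_w$. The quadratic parts cancel:
\[
\sum_i (a_i-k_i)^2-\sum_i x_i^2=\sum_i (a_i-k_i)^2-\sum_i\big(a_i+2(p-i+1)\big)^2
\]
is affine in $a$. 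The remaining pieces of $-\widetilde L$ are $2\sum_i(n-i+1)x_i$ and $-\sum_{i,j}(x_i+y_j)$, which are affine in $a$, together with $+\sum_{i,j}h(x_i-y_j)$. The function $h$ in \eqref{function-h} is convex: its slopes are $-1,-\tfrac12,\tfrac12,1$, nondecreasing, and it is continuous at $-1,0,1$. So each $h(x_i-y_j)$ is a convex function composed with an affine map. Summing, $\widetilde D_w$ is convex in $a$.

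The main point needing care is Step 1. One must check the inequality $h(d)\ge |d|+\tfrac12\delta_{0,d}$ on each interval: for $0<d\le 1$, $\tfrac12(d+1)\ge d$; for $-1<d<0$, $-\tfrac12(d-1)\ge -d$; and at $d=0$ both sides equal $\tfrac12$. One must also make sure that $L$ at the non-integral point $a$ is indeed given by \eqref{L-1}, where sorting by $x_i$ is harmless because $a$, as a convex combination of decreasing vectors, is again decreasing. Once the correct interpretation of $L$ off the lattice is fixed, the rest is routine bookkeeping of affine terms.
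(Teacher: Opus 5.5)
Your proposal is correct and follows essentially the paper's route. Both write $D_w$ as an affine part, plus the convex contribution of $\Phi_p(a-K_w)$ (you via sums of the $l$ largest absolute values, the paper via a maximum over permutations), plus $\sum_{i,j}h(x_i-y_j)$ with $h$ convex, and then apply Jensen. Your extra Step 1 ($L\ge\widetilde L$ off the lattice) only covers a possibly non-integral $a$, which the paper tacitly avoids because $(a\mid b)$ is an integral dominant weight there.
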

 \begin{proof}
 	 Denote
 	\[(a\mid b)+2\rho_c=(x_1,\dots,x_p\mid y_1,\dots,y_p).\]
 	That is, $x_i=a_i+2P_i$, $y_j=b_j+2Q_j$,  where $P_i=p-i+1$, and $Q_j=q-j+1$.
  Define
 	\begin{align*}
 		\caM_p(a)&=\max_{\sigma\in S_p}\sum_{i=1}^p P_i|a_{\sigma(i)}-k_{\sigma(i)}|,\\
 		\caM_q(b)&=\max_{\sigma\in S_q}\sum_{j=1}^q Q_j|b_{\sigma(j)}-r_{\sigma(j)}|,
 	\end{align*}
 	where $S_p$ and $S_q$ are symmetric groups. By \eqref{Phi}, we have
 	\[\caM_p(a)=\Phi_p(a-K_w),\quad \caM_q(b)=\Phi_q(b-R_w).\]
 	Therefore
 	\begin{equation}\label{a|b1}
 		\Vert (a\mid b)-\rho_n^{(w)}\Vert_\frk^2=\sum_{i=1}^p(a_i-k_i)^2+\sum_{j=1}^q(b_j-r_j)^2+2\caM_p(a)+2\caM_q(b)+\sum_{i=1}^pP_i^2+\sum_{j=1}^qQ_j^2.
 	\end{equation}

 	Thus by \eqref{L-1} and \eqref{M-mu-e-mu},
 	\begin{align*}
 		L(a\mid b)=\sum_{i=1}^p&(x_i^2-2(n-i+1)x_i)+\sum_{j=1}^q(y_j^2-2(n-j+1)y_j)\\
 		&+\sum_{s=1}^ns^2+\sum_{i=1}^p\sum_{j=1}^q[(x_i+y_j)-h(x_i-y_j)],
 	\end{align*}
where $h(x)$ is defined in \eqref{function-h}.
 	Combining with \eqref{a|b1}, we obtain
 	\begin{equation}\label{F}
 		D_{w}(a\mid b)=E(a,b)+2\caM_p(a)+2\caM_q(b)+\sum_{i=1}^p\sum_{j=1}^qh(x_i-y_j),
 	\end{equation}
 	where $E(a,b)$ is linear  in $(a,b)$.
 	
 	For any fixed $\sigma\in S_p$, the function $\sum_i P_i|a_{\sigma(i)}-k_{\sigma(i)}|$ is convex in $a$. Hence
 	\[\sum_i P_i|a_{\sigma(i)}-k_{\sigma(i)}|\le\sum_ts_t\caM_p(a^{(t)}).\]
 	Taking the maximum over $\sigma$ in the above expression gives
 	\[\caM_p(a)\le \sum_ts_t\caM_p(a^{(t)}).\]

Now fix $b$. Since $h$ is a convex function, we have
 	\[h(x_i-y_j) =  h\left(\sum_t (s_t x_i^{(t)}- s_t y_j) \right) \le \sum_ts_th(x_i^{(t)}-y_j).\]
Thus each term in \eqref{F} satisfies Jensen's inequality, the first assertion follows. Note that $h$ is an even function, the second assertion holds as well.
 \end{proof}

\section{Reduction techniques}\label{sec-RT}

For $0\le r<p$, define
\[\zeta_r=(0,\dots,0\mid r,0,\dots,0).\]
Put $\rho_n^{(w_p)}=(q-1, \dots, q-1\mid p, 0, \dots, 0)$.

\begin{lemma}\label{lem1}
	For $0\le r<p$, we have
	\begin{equation}\label{weight-1}
		\Vert\zeta_r+(p-r)\beta-\rho_n^{(w_p)}\Vert_\frk^2-L(\zeta_r)\le B_0
	\end{equation}
\end{lemma}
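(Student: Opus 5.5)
The plan is to compute both terms on the left side of \eqref{weight-1} in closed form as functions of $r$. We then compare the result with the value at $r=0$, which by \eqref{pbeta1} and \eqref{B0-explanation} equals $B_0$.

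\textbf{The $\frk$-norm term.} We have $\zeta_r+(p-r)\beta=(p-r,0,\dots,0\mid p,0,\dots,0)$. Subtracting $\rho_n^{(w_p)}$ gives
$$(p-r-q+1,\,-(q-1),\dots,-(q-1)\mid 0,\dots,0).$$
By \eqref{k-norm}, the $\frk$-norm squared is $\Phi_p$ of the first block plus $\Phi_q(0,\dots,0)=\sum_{s=1}^q s^2$. Put $c:=|q-p+r-1|$. Since $q\ge p$ and $0\le r<p$, we have $-1\le q-p+r-1\le q-2$, so $c<q-1$. Hence the decreasing rearrangement of the absolute values is $(q-1,\dots,q-1,c)$, with $q-1$ appearing $p-1$ times. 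By \eqref{Phi},
$$\Phi_p=\sum_{s=q+1}^{n-1}s^2+(c+1)^2 .$$
Therefore the first term equals
$$\sum_{s=1}^{n-1}s^2+(|q-p+r-1|+1)^2 .$$
At $r=0$ this is exactly the right-hand side of \eqref{pbeta1}. So the statement reduces to the inequality
$$L(\zeta_r)-L(\texttt{triv})\ \ge\ (|q-p+r-1|+1)^2-(|q-p-1|+1)^2. \qquad (\ast)$$

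\textbf{The lambda-norm difference.} We compute $L(\zeta_r)-L(\texttt{triv})$ from \eqref{L-N-M}. We have
$$\zeta_r+2\rho_c=(2p,\dots,2\mid 2q+r,2q-2,\dots,2),$$
which differs from $2\rho_c$ only in $y_1$, raised from $2q$ to $2q+r$. The three pieces change as follows.
\begin{itemize}
\item[(1)] In $N(\mu)$ only the $y_1$ term changes. Its change is $(2q+r)^2-2n(2q+r)-(4q^2-4nq)=r^2+2r(q-p)$.
\item[(2)] $M(\mu)$ does not change. Every $x_i\le 2p\le 2q$, so $\min\{x_i,y_1\}=x_i$ both before and after.
\item[(3)] $e(\mu)$ can change only through the pair $x_1=y_1$. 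This coincidence occurs in $2\rho_c$ exactly when $q=p$, and it disappears once $r\ge1$. So $-e(\mu)/2$ increases by $\tfrac12\delta_{p,q}$ when $r\ge1$.
\end{itemize}
Consequently
$$L(\zeta_r)-L(\texttt{triv})=r^2+2r(q-p)+\tfrac12\delta_{p,q}\,[r\ge1].$$

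\textbf{Case check.} If $q\ge p+1$, the right side of $(\ast)$ is $(q-p+r)^2-(q-p)^2=r^2+2r(q-p)$, so $(\ast)$ holds with equality. If $q=p$ and $r=0$, both sides of $(\ast)$ vanish. If $q=p$ and $r\ge1$, the right side of $(\ast)$ is $(r-1+1)^2-4=r^2-4$, while the left side is $r^2+\tfrac12$, so $(\ast)$ holds strictly.

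The only delicate point is bookkeeping. One must confirm that the explicit formula \eqref{L-1} applies without first reordering to find the chamber $\mathcal C_w$. It does, since \eqref{L-1} is written purely in terms of $x_i,y_j$. One must also track the single $e(\mu)$ coincidence in the case $p=q$, which is where the $4\delta_{p,q}$ in $B_0$ enters.
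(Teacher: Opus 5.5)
Your proof is correct and follows essentially the paper's route: you compute the $\frk$-norm term explicitly and the lambda norm of $\zeta_r$ from the explicit formula, with the same split into $q\ge p+1$ (equality) and $q=p$ (slack $9/2$ when $r\ge 1$); the only difference is that you anchor on \eqref{pbeta1} and compute $L(\zeta_r)-L(\texttt{triv})$ rather than $L(\zeta_r)$ outright. One small slip: $c<q-1$ fails when $p=q\le 2$ and $r=0$, but the rearrangement $(q-1,\dots,q-1,c)$ is still correct in those cases, so the argument is unaffected.
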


\begin{proof}
 Note that $p-r\le \caT(\zeta_r)$ by Lemma \ref{m}.

 Firstly, let us consider the case $q\ge p+1$. Arranging $\zeta_r+2\rho_c$ in decreasing order, we obtain the list
     	\[r+2q; \,	\underbrace{2(q-1), 2(q-2), \dots, 2(p+1)}_{q-p-1} ; 2p, 2p, 2(p-1), 2(p-1), \dots, 2, 2.\]
By \eqref{L-0}, we have
\[L(\zeta_r)=(r+q-p)^2+\sum\limits_{s=1}^{q-p-1}s^2+\dfrac{p}{2},\]
	When $q=p+1$, the above sum over $s$ is regarded as $0$.
	
	For $\zeta_r+(p-r)\beta$, we have
	\begin{equation}\label{A}
	\{\zeta_r+(p-r)\beta-\rho_n^{(w_p)}\}=(q-1,\dots,q-1,q-1-p+r\mid 0,\dots,0).
\end{equation}
	Thus 	\[\Vert\zeta_r+(p-r)\beta-\rho_n^{(w_p)}\Vert_\frk^2=\sum_{t=1}^{n-1}t^2+(q-p+r)^2.\]
	Then
	\[\Vert\zeta_r+(p-r)\beta-\rho_n^{(w_p)}\Vert_\frk^2-L(\zeta_r)=\sum_{t=q-p}^{n-1}t^2-\dfrac{p}{2}.\]
	By \eqref{B0}, we obtain
	\[\Vert\zeta_r+(p-r)\beta-\rho_n^{(w_p)}\Vert_\frk^2-L(\zeta_r)=B_0.\]
	
	Now consider the case that $q=p$. It is easy to obtain
	\[L(\zeta_r)=\left\{
	\begin{array}{ll}
		\dfrac{p}{2},& r=0,\\
		r^2+1+\dfrac{p-1}{2}, & r>0.
	\end{array}
	\right.\]
	Then
	\[\{\zeta_r+(p-r)\beta-\rho_n^{(w_p)}\}=(p-1,\dots,p-1,|r-1|\mid 0,\dots,0).\]
	Therefore
	\[\Vert\zeta_r+(p-r)\beta-\rho_n^{(w_p)}\Vert_\frk^2=\sum_{t=1}^{n-1}t^2+(|r-1|+1)^2.\]
	By \eqref{B0},
	\[
	\Vert\zeta_r+(p-r)\beta-\rho_n^{(w_p)}\Vert_\frk^2-L(\zeta_r)=\left\{
	\begin{array}{ll}
		B_0, & r=0,\\
		B_0-\dfrac{9}{2}, & 1\le r<p.
	\end{array}
	\right.
	\]
	This finishes the proof.	
\end{proof}

For $0\le k\le p$, define
\[\eta_k=(q-k,0,\dots,0\mid 0,\dots,0).\]
Put $\rho_n^{(w_0)}=(q, 0, \dots, 0 \mid p-1, \dots, p-1)$. For $1\leq k\leq p-1$, let
\[
\rho_n^{(w_k)}=
	(q, \underbrace{q-1, \dots, q-1}_k, \underbrace{0, \dots, 0}_{p-k-1}\mid p-1, p-k-1, \dots, p-k-1).
\]
Recall that $\rho_n^{(w_p)}=(q-1, \dots, q-1\mid p, 0, \dots, 0)$.
Note that $\rho_n^{(w_k)}\in \Omega_{p, q}$ for $0\leq k\leq p$.
Then we compute that $\{\eta_k + k\beta - \rho_n^{(w_k)}\}$ is equal to
\begin{equation}\label{eta-k}
\begin{cases}
(0, \dots, 0 \mid p-1, \dots, p-1), & k = 0, \\
(\underbrace{q-1, \dots, q-1}_{k}, 0, \dots, 0 \mid p-k-1, \dots, p-k-1), & 1 \le k \le p-1, \\
(q-1, \dots, q-1, 1 \mid 0, \dots, 0), & k = p.
\end{cases}
\end{equation}

\begin{lemma}\label{lemma-etak}
For  $0\le k\le p$, we have
\begin{equation}\label{weight-2}
\Vert\eta_k+k\beta-\rho_n^{(w_k)}\Vert_\frk^2-L(\eta_k)\le\left\{
\begin{array}{ll}
			B_0, &  \mbox{if } q\neq p+1,\\
			B_0+\dfrac{3}{2}, & \mbox{otherwise}.
		\end{array}
		\right.
\end{equation}
\end{lemma}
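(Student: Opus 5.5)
The plan is a direct computation, parallel to the proof of Lemma \ref{lem1}: evaluate both terms in \eqref{weight-2} in closed form as functions of $k$, $p$, $q$, and compare the difference with $B_0$. Recall that for $q\ge p+1$ the proof of Lemma \ref{lem1} gives $B_0=\sum_{t=q-p}^{n-1}t^2-\frac{p}{2}$. For $q=p$ we use \eqref{B0} directly.

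\emph{Spin side.} Use \eqref{eta-k} together with \eqref{k-norm} and \eqref{Phi}. For $1\le k\le p-1$ this gives
\[
\Vert\eta_k+k\beta-\rho_n^{(w_k)}\Vert_\frk^2=\sum_{s=n-k}^{n-1}s^2+\sum_{s=1}^{p-k}s^2+\sum_{s=p-k}^{n-k-1}s^2 .
\]
The boundary cases are $k=0$, where the value is $\sum_{s=1}^{p}s^2+\sum_{s=p}^{n-1}s^2$, and $k=p$, where it is $\sum_{s=q}^{n-1}s^2+4+\sum_{s=1}^{q}s^2$. These are routine sums of squares.

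\emph{Lambda side.} Use \eqref{L-0}. We have $\eta_k+2\rho_c=(q-k+2p,2(p-1),\dots,2\mid 2q,2(q-1),\dots,2)$. Sorting its entries shows that the only entry breaking the pattern of Lemma \ref{lem1} is $x_1=2p+q-k$. This entry sits among the values $2q,\dots,2(p+1)$ (or above them), and it equals one of the $y_j$ exactly when $q-k$ is even and $2p+q-k\le 2q$. The remaining entries $2(p-1),\dots,2$ occur twice each and contribute $\frac{p-1}{2}$ to $e(\eta_k)/2$. Choose $w$ with $\eta_k+2\rho_c\in\mathcal C_w$, so that $\rho^{(w)}$ is just $\rho$ placed in sorted order. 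Then $\Vert\eta_k+2\rho_c-\rho^{(w)}\Vert^2$ can be read off coordinatewise: the coordinates $2c$ with $c\le p-1$ give the pairs $(1,0)$, those with $p<c\le q$ give the consecutive values $1,\dots,q-p$ (shifted by one place near $x_1$), and $x_1$ gives one extra term. One expects $L(\eta_k)$ to equal $L(\zeta_0)$ plus roughly $(q-k+\text{const})^2$, with a parity correction of $\pm\frac12$ coming from $e(\eta_k)$.

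\emph{Comparison.} Subtract the two expressions. The result should be $B_0$ minus an explicit quantity $\Delta(k)$, quadratic in $k$ up to the parity term. The claim then reduces to showing $\Delta(k)\ge 0$ (respectively $\Delta(k)\ge-\frac32$ when $q=p+1$). Since $\Delta$ is a quadratic in $k$ plus a bounded parity term, it suffices to locate its minimum over $0\le k\le p$ and check it. The extremal values should be at or near $k=0$ and $k=p$. The value $B_0+\frac32$ for $q=p+1$ should arise at $k=p$: there $\eta_p=(1,0,\dots,0\mid 0,\dots,0)=e_1$, consistent with $B(e_1)=B_0+\frac32$. The cases $q=p$, $q=p+1$ and $q\ge p+2$ are treated separately, because the relative position of $x_1$ among the $y_j$ and the $\delta_{p,q}$ term in \eqref{B0} change.

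I expect the main obstacle to be the lambda-norm bookkeeping, namely tracking exactly where $2p+q-k$ falls among the $y_j$ and whether it creates a tie. I would handle it by writing $L(\eta_k)$ via \eqref{L-1} instead. There $\sum_{i,j}\min\{x_i,y_j\}$ only changes in the row $i=1$, which equals $\sum_j\min\{2p+q-k,2(q-j+1)\}$ and is easy to sum in closed form. This avoids sorting altogether, and $e(\eta_k)$ is then just $(p-1)+[\,q-k \text{ even},\ k\ge 2p-q\,]$.
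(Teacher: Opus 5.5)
Your route is different from the paper's and can be completed. It computes $C_k:=\Vert\eta_k+k\beta-\rho_n^{(w_k)}\Vert_\frk^2-L(\eta_k)$ in closed form for every $k$ and compares with $B_0$. As written, however, two points in the sketch are wrong.

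\emph{The $k=p$ spin value.} It is $\sum_{s=q+1}^{n-1}s^2+4+\sum_{s=1}^{q}s^2=\sum_{s=1}^{n-1}s^2+4$, because the entry $q-1$ occurs only $p-1$ times in \eqref{eta-k}. Your lower limit $s=q$ adds an extra $q^2$, and with it the comparison at $k=p$ fails outright. For $p=1$, $q=2$ you would get $C_1=10>6=B_0+\frac32$.

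\emph{The shape of $B_0-C_k$.} It is not a single quadratic in $k$ plus a parity term. The row-one sum $\sum_j\min\{2p+q-k,\,2(q-j+1)\}$ saturates at $q(q+1)$once $x_1>2q$, that is, for $k<2p-q$. Moreover, the parity of $x_1$ in that floor sum cancels exactly against the tie term in $e(\eta_k)$.

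Concretely, put $m=q-p$, $j=p-k$ and $\Sigma=\sum_{s=1}^{n-1}s^2$. Then $\Vert\eta_k+k\beta-\rho_n^{(w_k)}\Vert_\frk^2=\Sigma+j^2$ for $k\le p-1$. The lambda side is
$L(\eta_k)=j^2+\sum_{s=1}^{m+1}s^2+\frac{p-1}{2}$ for $k<2p-q$, and
$L(\eta_k)=\sum_{s=1}^{m}s^2+\frac{(q-k)^2}{2}+(q-k)+\frac p2$ for $k\ge 2p-q$.

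Hence, for $q>p$:
\begin{itemize}
\item $B_0-C_k=m^2+(m+1)^2-\frac12$ for $k<2p-q$;
\item $B_0-C_k=m^2-j^2+\frac{(q-k)^2}{2}+(q-k)\ge 0$ for $2p-q\le k\le p-1$;
\item $B_0-C_k=\frac32m^2+m-4$ at $k=p$, which equals $-\frac32$ exactly when $q=p+1$.
\end{itemize}
For $q=p$ the slack is $\frac92$ for $k<p$ and $0$ at $k=p$. So the case split you actually need is $k$ versus $2p-q$, for every $q$, not only the trichotomy $q=p$, $q=p+1$, $q\ge p+2$. With these corrected values the comparison is immediate.

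\emph{Comparison with the paper.} The paper never evaluates $L(\eta_k)$ in closed form. It combines the exact spin differences \eqref{diff-1} (and $+3$ at the last step) with the one-sided estimate \eqref{diff-2} to get $C_0\le C_1\le\cdots\le C_p$. Only the top end then needs checking:
\begin{itemize}
\item $C_p$, via $L(\eta_p)\ge L(\texttt{triv})$ and \eqref{pbeta1};
\item for $q=p+1$, also $C_{p-1}\le B_0$ via $\eta_{p-1}\gg\texttt{triv}$, together with the exact value $C_p=B_0+\frac32$.
\end{itemize}
The paper's argument is shorter and needs only inequalities. Your direct route pays for the exact lambda-norm bookkeeping, but it returns the exact slack for every $k$. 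For example, it shows that $C_k$ is constant for $k<2p-q$, and that equality $C_k=B_0$ occurs only at $k=p$ with $q=p$.
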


\begin{proof}
Note that $k\le \caT(\eta_k)$ by Lemma \ref{m}.

Write $\eta_k+2\rho_c$ as $(x_1, \dots, x_p\mid y_1, \dots, y_q)$, then $x_1=q-k+2p$, and the other components are
$$2q, 2(q-1), \dots, 2p$$
and
$$
2(p-1), 2(p-1), 2(p-2), 2(p-2), \dots, 2, 2.
$$
It follows from \eqref{eta-k} that
\[\Vert\eta_k+k\beta-\rho_n^{(w_k)}\Vert_\frk^2=\left\{
\begin{array}{ll}
	\sum\limits_{s=1}^ps^2+\sum\limits_{s=p}^{n-1}s^2,& k=0,\\
	\sum\limits_{s=n-k}^{n-1}s^2+\sum\limits_{s=1}^{p-k}s^2+\sum\limits_{s=p-k}^{n-1-k}s^2, & 1\le k\le p-1,\\
	\sum\limits_{s=q+1}^{n-1}s^2+4+\sum\limits_{s=1}^{q}s^2, &k=p.
\end{array}
\right.\]
For $0\le k\le p-2$, we have	
\begin{equation}\label{diff-1}
\Vert\eta_{k+1}+(k+1)\beta-\rho_n^{(w_{k+1})}\Vert_\frk^2-\Vert\eta_k+k\beta-\rho_n^{(w_k)}\Vert_\frk^2
=1-2(p-k).
\end{equation}
Similarly,
$$
\Vert\eta_p+ p\beta-\rho_n^{(w_p)}\Vert_\frk^2-
\Vert\eta_{p-1}+(p-1)\beta-\rho_n^{(w_{p-1})}\Vert_\frk^2=3.
$$

 For $0\le k\le p-1$, by \eqref{L-1}, we compute that
\[L(\eta_k)-L(\eta_{k+1})=2(p-k)-1+2h_k-\dfrac{l_k-l_{k+1}}{2},\]
where
$$
h_k=\mathrm{card}\{t\mid 1\le t\le q, 2t\ge n+p-k\}
$$
and
$$
l_k=\mathrm{card}\{t\mid 1\le t\le q, 2t=n+p-k\}.
$$
	Since $h_k\ge l_k$ and $h_k\ge l_{k+1}$, we get
\begin{equation}\label{diff-2}
L(\eta_k)-L(\eta_{k+1})\ge 2(p-k)-1.
\end{equation}

Now put $C_k=\Vert\eta_k+k\beta-\rho_n^{(w_k)}\Vert_\frk^2-L(\eta_k)$.   By \eqref{diff-1} and \eqref{diff-2}, we have
\[C_k-C_{k-1}\ge 0\]
for $1\leq k\leq p-1$. Similarly, we have $C_p-C_{p-1}\ge 4$.
This implies that
		\[C_0\le C_1\le\cdots\le C_p.\]

	Now let us prove the lemma.
Firstly, suppose $q=p$. From \eqref{B0}, we get $C_p=B_0$. Hence
\[C_k\le B_0,\quad 0\le k\le p.\]
	
Now assume that $q=p+1$. By \eqref{P0}, we have $L(\texttt{triv})=p/2+1$. Again from \eqref{B0}, we get
$$
\Vert\eta_{p-1}+(p-1)\beta-\rho_n^{(w_{p-1})}\Vert_\frk^2=B_0+L(\texttt{triv}).
$$
Noting that $\eta_{p-1}\gg\texttt{triv}$, Proposition \ref{lambda-up} shows
	\[C_k\le C_{p-1}\leq \Vert\eta_{p-1}+(p-1)\beta-\rho_n^{(w_{p-1})}\Vert_\frk^2-L(\texttt{triv})=B_0,\quad 0\le k<p.\]
	On the other hand, \eqref{L-1} gives $L(\eta_p)=\dfrac{p+5}{2}$. Hence
	\[C_p=\Vert\eta_p+p\beta-\rho_n^{(w_p)}\Vert_\frk^2-L(\eta_p)=B_0+\dfrac{3}{2}.\]
	
Finally, consider the case that $q\ge p+2$. We obtain
	\[\Vert\eta_p+p\beta-\rho_n^{(w_p)}\Vert_\frk^2=\sum_{s=1}^{n-1}s^2+(q-p)^2-((q-p)^2-4)\le \sum_{s=1}^{n-1}s^2+(q-p)^2.\]
	Since $\eta_p\gg\texttt{triv}$, monotonicity of the lambda norm and \eqref{pbeta1} gives
	\[C_k\le C_p=\Vert\eta_p+p\beta-\rho_n^{(w_p)}\Vert_\frk^2- L(\eta_p)< \sum_{s=1}^{n-1}s^2+(q-p)^2-L(\texttt{triv})=B_0,\quad 0\le k\le p.\]
\end{proof}

Now, put
\[P_i=p-i+1,\quad Q_j=q-j+1,\quad 1\le i\le p,1\le j\le q.\]
Let $(a\mid b)=(a_1, \dots, a_p \mid b_1, \dots, b_q)$ be any $\Delta^+(\frk,\frt)$-dominant weight. Set
$$
a^*=(a_1^*, \dots, a_p^*), \quad b^*=(b_1^*, \dots, b_q^*),
$$
where $a_i^*=\min\{a_i, q\}$ and $b_j^*=\min\{b_j, p\}$.
Thus $a^*$ and $b^*$ uniquely determine elements in $\Omega_{p, q}$, which are denoted by
$(a^*\mid R(a))$ and $(K(b)\mid b^*)$, respectively.

\begin{lemma}\label{lem-1}
	Let $(a\mid b)=(a_1, \dots, a_p \mid b_1, \dots, b_q)$ be any $\Delta^+(\frk,\frt)$-dominant weight. Then
		\begin{subequations}
		\begin{equation}\label{1-l}
			\Vert(a\mid b)-(a^*\mid R(a))\Vert_\frk^2-L(a\mid b)\le \Vert(a^*\mid b)-(a^*\mid R(a))\Vert_\frk^2-L(a^*\mid b),
		\end{equation}
		and
		\begin{equation}\label{1-r}
			\Vert(a\mid b)-(K(b)\mid b^*)\Vert_\frk^2-L(a\mid b)\le \Vert(a\mid b^*)-(K(b)\mid b^*)\Vert_\frk^2-L(a\mid b^*).
		\end{equation}
	\end{subequations}
\end{lemma}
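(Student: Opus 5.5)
The plan is to prove \eqref{1-l} by lowering the entries of $a$ that exceed $q$ one unit at a time, while keeping $b$ and the element $(a^*\mid R(a))\in\Omega_{p,q}$ fixed. At each step I check that the left-hand quantity does not decrease. The bound \eqref{1-r} will then follow by the same argument with the roles of $(a,p,q)$ and $(b,q,p)$ swapped; this works because \eqref{L-1} is symmetric in $x$ and $y$.

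Fix $b$ and $\kappa:=(a^*\mid R(a))$, and consider
\[
F(a'):=\Vert(a'\mid b)-\kappa\Vert_\frk^2-L(a'\mid b)
\]
for dominant $a'$ with $a^*\ll a'\ll a$. Every such $a'$ satisfies $a'^*=a^*$. We have $a'_i-a^*_i=a'_i-q\ge 0$ when $a_i>q$ and $a'_i-a^*_i=0$ otherwise, so the first block of $(a'\mid b)-\kappa$ is a nonnegative vector $c$ with $c_i=\max\{a'_i-q,0\}$, already in decreasing order. The second block $b-R(a)$ does not depend on $a'$. Hence by \eqref{k-norm},
\[
\Vert(a'\mid b)-\kappa\Vert_\frk^2=\Phi_p(c)+\text{const}.
\]

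For the elementary step, pick the largest index $i$ with $a'_i>q$. Then $a'_{i+1}\le q<a'_i$, so $a''=a'-e_i$ is still dominant. The vector $c$ stays sorted, and
\[
\Phi_p(c)-\Phi_p(c-e_i)=2(c_i+P_i)-1=2(a'_i-q)+2P_i-1 .
\]
On the lambda side, write $x_i=a'_i+2P_i$ and use \eqref{L-1}, which does not depend on the choice of chamber. The drop $L(a'\mid b)-L(a''\mid b)$ consists of three pieces:
\begin{itemize}
\item from $x_i^2-2(n-i+1)x_i$: exactly $2x_i-1-2(n-i+1)=2a'_i-2q+2P_i-1$;
\item from $M$: exactly $2\,\mathrm{card}\{j\mid y_j\ge x_i\}$;
\item from $-e/2$: $\tfrac12\bigl(e(a'')-e(a')\bigr)\ge -\tfrac12\,\mathrm{card}\{j\mid y_j=x_i\}$, since the only ties that can be lost are those with $y_j=x_i$.
\end{itemize}
Therefore the lambda drop is at least $2(a'_i-q)+2P_i-1+\tfrac32\,\mathrm{card}\{j\mid y_j= x_i\}$, which is at least the $\Phi_p$ drop. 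This gives $F(a')\le F(a'')$. Iterating until all entries are $\le q$ reaches $a^*$ and yields \eqref{1-l}.

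The main point to check is the careful bookkeeping of the tie term $e$ against the $\min$ term. Both are controlled by the same set $\{j\mid y_j=x_i\}$, so the count $\mathrm{card}\{j\mid y_j\ge x_i\}$ dominates the loss. One also has to confirm that the chosen order of decrements keeps both dominance and the sortedness of $c$, so that the explicit $\Phi_p$ increment formula applies; both follow from choosing the largest such index $i$.
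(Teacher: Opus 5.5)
Your argument is correct and is essentially the paper's proof. The paper also splits $L$ via \eqref{L-N-M}, notes that the $N$-part grows by exactly $\sum_i(d_i^2+2P_id_i)$ with $d_i=\max\{a_i-q,0\}$ (matching the growth of $\Phi_p$), and shows that the $M-\tfrac{e}{2}$ part is nondecreasing in $x_i$ using the $1$-Lipschitz function $h$ of \eqref{function-h}. You do the same bookkeeping one unit at a time, with your explicit tie count $\mathrm{card}\{j\mid y_j=x_i\}$ playing the role of $h$, and \eqref{1-r} follows by the same symmetry.
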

\begin{proof}
	We only prove \eqref{1-l}. 	Let $d_i=a_i-a_i^*=\max\{a_i-q,0\}$. Since $a_1\ge\cdots\ge a_p$, we have
$$
d_1\ge\cdots\ge d_p\ge 0.
$$
Hence
\[\{(a\mid b)-(a^*\mid R(a))\}-\{(a^*\mid b)-(a^*\mid R(a))\}=(d_1,\dots,d_p\mid 0,\dots,0).\]
Thus
		\begin{equation}\label{eq1}
		\Vert(a\mid b)-(a^*\mid R(a))\Vert_\frk^2-\Vert(a^*\mid b)-(a^*\mid R(a))\Vert_\frk^2=\sum_{i=1}^p(d_i^2+2P_id_i).
		\end{equation}
Recall that $P_i=p-i+1$ and $Q_j=q-j+1$.
	
	We now compute $L(a\mid b)-L(a^*\mid b)$. Set
	\[x_i=a_i+2P_i,\quad x_i^*=a_i^*+2P_i,\quad y_j=b_j+2Q_j.\]
	In particular, $x_i=x_i^*+d_i$. When $d_i>0$, we have $x_i^*=q+2P_i$, so
	\[N (a\mid b)- N(a^*\mid b)=\sum_{i=1}^{p} (d_i^2 + 2P_i d_i).\]

For any $1\leq i\leq p$ and $1\leq j \leq q$, note that
the difference between
$$
(x_i^* + d_i + y_i) - h(x_i^* + d_i - y_i)
$$
and
$$
(x_i^*  + y_i) - h(x_i^*  - y_i)
$$
is
$$
d_i + h(x_i^*  - y_i) - h(x_i^* + d_i - y_i),
$$
which is non-negative due to the definition of \eqref{function-h}. Therefore,
$$
		M (a\mid b)-M(a^*\mid b) + \frac{1}{2} \left(e(a\mid b)-e(a^*\mid b)\right)\geq 0
$$
by \eqref{M-mu-e-mu}. Now by \eqref{L-N-M}, we have
		\begin{equation}\label{eq2}
		L(a\mid b)-L(a^*\mid b)\ge \sum_{i=1}^p(d_i^2+2P_id_i).
	\end{equation}

Combining \eqref{eq1} and \eqref{eq2}, we obtain \eqref{1-l}.
\end{proof}
Now for $0\le s,t\le q$, put
\[\varphi_s=(\underbrace{p-1, \dots, p-1}_{s},\underbrace{0, \dots, 0}_{q-s}),\quad\phi_t=(\underbrace{2p, \dots, 2p}_{t},\underbrace{0, \dots, 0}_{q-t}).\]
For a $q$-dimensional real vector $v$, let $(K(v)\mid v)$ denote the unique element in $\Omega_{p,q}$ whose last $q$ coordinates are $v$ (if it exists). For instance,
\begin{equation}\label{K-phi-s}
K(\varphi_s)=(q, \underbrace{q-s, \dots, q-s}_{p-1}).
\end{equation}
Define
\begin{equation}\label{F-t-v}
F_t(v):=\Vert(K(v)\mid \phi_t)-(K(v)\mid v)\Vert_\frk^2-L(K(v)\mid \phi_t).
\end{equation}

\begin{lemma}\label{lem-p}
Suppose $p\ge 2$. Fix $\rho_n^{(w)}\in\Omega_{p,q}$ as in \eqref{rho-n-w}. Assume in addition that $k_1=q$.
	If for every $0\le s\le q$ and $0\le t\le q$, we have $F_t(\varphi_s)\le B_0$, then
	\[F_t(R_w)=D_w(K_w\mid \phi_t)\le B_0.\]
\end{lemma}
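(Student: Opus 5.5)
The plan is to exploit the constraint $k_1=q$ to reduce to the extremal vectors $\varphi_s$. By condition (c) in the description of $\Omega_{p,q}$, $k_1=q$ forces $r_j\le p-1$ for all $j$. So $R_w$ is a non-increasing integer vector with entries in $[0,p-1]$, and the $\varphi_s$ ($0\le s\le q$) are exactly the vertices of this set. Concretely, with the convention $r_0:=p-1$, $r_{q+1}=0$,
\[
R_w=\sum_{s=0}^{q}\frac{r_s-r_{s+1}}{p-1}\,\varphi_s ,
\]
where the coefficients are nonnegative and sum to $1$; this uses $p\ge 2$. I would also record that $K(v)$ is determined by $v$ through the conjugate-partition rule $k_1=q$ and $k_i=q-\mathrm{card}\{j\mid v_j\ge p-i+1\}$ for $i\ge 2$, which for $v=\varphi_s$ recovers \eqref{K-phi-s}. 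The first identity $F_t(R_w)=D_w(K_w\mid\phi_t)$ is immediate from the definitions \eqref{D-w-mu} and \eqref{F-t-v}, since $(K(R_w)\mid R_w)=\rho_n^{(w)}$.

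Next I would simplify $F_t(v)$. The $a$-parts of $(K(v)\mid\phi_t)$ and $(K(v)\mid v)$ coincide, so by \eqref{k-norm}
\[
F_t(v)=\sum_{i=1}^p P_i^2+\Phi_q(\phi_t-v)-L(K(v)\mid\phi_t).
\]
As in the proof of Proposition \ref{Jensen}, $\Phi_q(\phi_t-v)$ is a maximum over permutations of functions that are convex in $v$, hence convex. The difficulty is the lambda term, because $v\mapsto K(v)$ is not affine. Therefore I would not apply Jensen globally. Instead I would use a discrete, step-by-step version.

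Suppose $R_w$ takes some value $c$ with $0<c<p-1$ on a maximal block of indices $J$. Let $v^{\pm}$ be obtained by replacing $c$ by $c\pm1$ on $J$. Both $v^{\pm}$ are still admissible, and $R_w=\tfrac12(v^++v^-)$. By the conjugate-partition rule, passing from $R_w$ to $v^{+}$ (resp.\ $v^-$) changes only one coordinate of $K$: it lowers $k_{p-c}$ (resp.\ raises $k_{p-c+1}$) by $|J|$. I would then prove the midpoint inequality
\[
F_t(R_w)\le \max\{F_t(v^+),F_t(v^-)\}.
\]
Iterating this moves every intermediate value to $0$ or $p-1$, reaching some $\varphi_s$; each step does not decrease $F_t$, and the number of distinct intermediate values strictly drops. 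The hypothesis $F_t(\varphi_s)\le B_0$ then finishes the proof.

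The main obstacle is this midpoint step for the term $-L(K(v)\mid\phi_t)$. For it I would use the explicit formula \eqref{L-N-M} together with \eqref{M-mu-e-mu}. Along the move, only the single $x$-coordinate $x_i=k_i+2P_i$ changes, with $i=p-c$ or $p-c+1$. Its contribution is $x_i^2-2(n-i+1)x_i+\sum_j\bigl(x_i+y_j-h(x_i-y_j)\bigr)$ with $y=\phi_t+2\rho_c$ fixed. Because $h$ is convex, $-h$ is concave, so this contribution is at worst a quadratic with leading coefficient $1$ minus a concave piece. Against it, $\Phi_q(\phi_t-v)$ gains the quadratic $\sum_{j\in J}(\cdot)^2$ term, of size $|J|$, in the $v$-direction. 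I would check that the quadratic defect $|J|^2$ coming from $x_i^2$ is dominated by the second difference of $\Phi_q(\phi_t-v)$ along $\pm\mathbf 1_J$ plus the linear cross terms; this is where the specific structure $\phi_t\in\{0,2p\}^q$ will be used.

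If this comparison fails in some subcase, the fallback is as follows. Bound $L(K_w\mid\phi_t)$ from below by $L(K(\varphi_s)\mid\phi_t)$ for a suitable $s$ using Proposition \ref{lambda-up}, since $K_w\gg K(\varphi_s)$ when $s=\mathrm{card}\{j\mid r_j\ge 1\}$. Then bound the norm term directly by $\Phi_q(\phi_t-\varphi_s)$ via Lemma \ref{lem-Phi}.
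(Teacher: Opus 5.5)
Your setup is correct: $k_1=q$ forces $r_j\le p-1$, your weights $\frac{r_s-r_{s+1}}{p-1}$ are exactly the paper's $c_s$, and $F_t(v)=\sum_iP_i^2+\Phi_q(\phi_t-v)-L(K(v)\mid\phi_t)$. The gap is that the step carrying the whole content of the lemma, $F_t(R_w)\le\max\{F_t(v^+),F_t(v^-)\}$, is never proved. You only say you ``would check'' it and add a fallback in case it fails.

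Your sketch of that step also rests on the wrong picture. As you note, $v^+$ lowers $k_{p-c}$ by $m=|J|$ while $v^-$ raises $k_{p-c+1}$ by $m$. So the second difference couples two different coordinates of $K$, and there is no one-variable quadratic with an $|J|^2$ defect to absorb. The coupling in fact helps:
\begin{itemize}
\item Since $k_{p-c+1}-k_{p-c}=-m$, i.e.\ $x_{p-c+1}=x_{p-c}-m-2$, the $N$-part of $L(v^+)+L(v^-)-2L(R_w)$ is exactly $-2m$.
\item With $g(u)=\sum_j\min\{u,y_j\}$ and $X=x_{p-c}$, the $M$-part is $2\bigl([g(X-2)-g(X-m-2)]-[g(X)-g(X-m)]\bigr)$. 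This is at most $2m$, because $g$ is concave with breakpoints at the distinct even integers $y_j=\phi_t(j)+2Q_j$.
\item The $e$-part is at most $1$.
\item $\Phi_q(\phi_t-\cdot)$ is $\|\cdot\|^2$ plus a convex function, so its second difference along $\mathbf{1}_J$ is at least $2m$.
\end{itemize}
Together these give $F_t(v^+)+F_t(v^-)-2F_t(R_w)\ge 2m-1>0$. Without an estimate of this kind the proposal is not a proof.

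The surrounding argument also needs repair.
\begin{itemize}
\item A non-strict ``$\le\max$'' at each point allows interior plateaus and back-and-forth moves. Moreover, shifting $J$ by one unit to a value not yet taken does not reduce the number of distinct intermediate values. What works is strict discrete convexity of $F_t$ in the value of $J$ over the whole segment between the neighbouring values (or $0$, $p-1$). Then the maximum sits at an endpoint, where $J$ merges with a neighbour.
\item Your fallback fails in general. For $t=q$, $\Phi_q(\phi_q-v)$ decreases as $v$ grows, so bounding it above needs a comparison vector below $R_w$. But $L(K_w\mid\phi_q)\ge L(K(\varphi_s)\mid\phi_q)$ needs $\varphi_s$ above $R_w$. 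With your $s$ one gets $\Phi_q(\phi_q-R_w)>\Phi_q(\phi_q-\varphi_s)$ whenever some $1\le r_j\le p-2$. The trick only works when both terms point the same way, as for $t=0$ in the proof of Lemma \ref{lem-2}.
\end{itemize}

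For comparison, the paper avoids iteration. It moves one coordinate at a time along $\varphi_{s,l}$ and shows the increment $\sigma_{t,s}(l)$ is independent of the tail $r_{s+1},\dots,r_q$. It also shows $\sigma_{t,s}(l)$ is strictly decreasing in $l$, with margin $4-2-1$; this is the case $m=1$ of the estimate above. It then telescopes directly to $F_t(R_w)\le\sum_sc_sF_t(\varphi_s)$. Your block-move route would also succeed once the convexity estimate is supplied, giving the weaker but sufficient bound $F_t(R_w)\le\max_sF_t(\varphi_s)$.
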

\begin{proof}	For $1\le s\le q$ and $r_{s}\le l\le p-1$, define the $q$-dimensional vector
	\[\varphi_{s,l}=(\underbrace{p-1,\dots,p-1}_{s-1},l,r_{s+1},\dots,r_q)\]
Recall that $R_w=(r_1,\dots,r_q)$.	
Thus $\varphi_{1,r_1}=R_w$. Note also that
\begin{equation}\label{var}
\varphi_{1, p-1}=\varphi_{2, r_2}, \varphi_{2, p-1}=\varphi_{3, r_3}, \dots,
\varphi_{q-1, p-1}=\varphi_{q, r_q}, \varphi_{q, p-1}=\varphi_{q}.
\end{equation}
For $r_{s}+1\le l\le p-1$, set
\[\sigma_{t,s}(l):=F_t(\varphi_{s,l-1})-F_t(\varphi_{s,l}).\]

For $1\le s\le q$ and $0\le l\le p-1$, define
\[\varphi_{s,l}^0=(\underbrace{p-1,\dots,p-1}_{s-1},l,0,\dots,0).\]
For $1\le l\le p-1$, set
	\[\sigma_{t,s}^0(l):=F_t(\varphi^0_{s,l-1})-F_t(\varphi^0_{s,l}).\]

We will show that $\sigma_{t,s}(l)$ is monotonically decreasing with respect to $l$ and
	\[\sigma_{t,s}(l)=\sigma_{t,s}^0(l).\]
It follows that for any $r_s\le z\le p-1$,
	\[\sum_{l=z+1}^{p-1}\sigma_{t,s}(l)\le \frac{p-1-z}{p-1}\sum_{l=1}^{p-1}\sigma_{t,s}^0(l).\]
	Note that	\[\sum_{l=z+1}^{p-1}\sigma_{t,s}(l)=F_t(\varphi_{s,z})-F_{t}(\varphi_{s,p-1})\]
	and
\[
\sum_{l=1}^{p-1}\sigma^0_{t,s}(l)
=F_t(\varphi_{s,0}^0)-F_{t}(\varphi_{s,p-1}^0)
=F_t(\varphi_{s-1})-F_t(\varphi_s).\]
	Thus
	\begin{align*}
		F_t(R_w)&= F_t(\varphi_{1, r_1})\\
&=F_t(\varphi_{q}) + \sum_{s=1}^{q} (F_t(\varphi_{s, r_s}) - F_t(\varphi_{s, p-1}))\\
&=F_t(\varphi_{q})+\sum_{s=1}^q\sum_{l=r_s+1}^{p-1}\sigma_{t,s}(l)\\
		&\le F_t(\varphi_{q})+\sum_{s=1}^q  \frac{p-1-r_s}{p-1} \sum_{l=1}^{p-1} \sigma^0_{t,s}(l)\\
&= F_t(\varphi_{q})+\sum_{s=1}^q  \frac{p-1-r_s}{p-1} (F_t(\varphi_{s-1})-F_t(\varphi_s))\\
&=\sum_{s=0}^qc_s F_t(\varphi_s),
	\end{align*}
	where the second step uses \eqref{var} and
	\[c_0=1-\dfrac{r_1}{p-1},\quad c_i=\dfrac{r_{i}-r_{i+1}}{p-1}\quad(1\le i\le q-1),\quad
	c_q=\dfrac{r_q}{p-1}\]
	satisfies $c_s\ge 0$ and $\sum_sc_s=1$. Therefore, the lemma holds.

Now let us analyze $\sigma_{t,s}(l)$, which is the sum of
\begin{equation}\label{term-1}
\|(K(\varphi_{s, l-1})\mid\phi_t) - (K(\varphi_{s, l-1})\mid\varphi_{s, l-1})\|_{\frk}^2 - \|(K(\varphi_{s, l})\mid\phi_t) - (K(\varphi_{s, l})\mid\varphi_{s, l})\|_{\frk}^2
\end{equation}
and
\begin{equation}\label{term-2}
L(K(\varphi_{s, l})\mid\phi_t)- L(K(\varphi_{s, l-1})\mid\phi_t).
\end{equation}

Firstly, we calculate \eqref{term-1}. Note that
\[\|(K(\varphi_{s, l-1})\mid\phi_t) - (K(\varphi_{s, l-1})\mid\varphi_{s, l-1})\|_{\frk}^2=\Phi_q(\phi_t-\varphi_{s,{l-1}})\]
and
\[\|(K(\varphi_{s, l})\mid\phi_t) - (K(\varphi_{s, l})\mid\varphi_{s, l})\|_{\frk}^2=\Phi_q(\phi_t-\varphi_{s,l}).\]
 Suppose $s\le t$. Then
\begin{align*}
	\{\phi_t-\varphi_{s,l}\}_q&=(\underbrace{2p-r_t,\dots,2p-r_{s+1}}_{t-s},\underbrace{2p-l,p+1,\dots,p+1}_{s},r_{t+1},\dots,r_q),\\
	\{\phi_t-\varphi_{s,l-1}\}_q&=(\underbrace{2p-r_t,\dots,2p-r_{s+1}}_{t-s},\underbrace{2p-l+1,p+1,\dots,p+1}_{s},r_{t+1},\dots,r_q).
\end{align*}
Suppose $s>t$. We have
\begin{align*}
	\{\phi_t-\varphi_{s,l}\}_q&=(\underbrace{p+1,\dots,p+1}_{t},\underbrace{p-1,\dots,p-1}_{s-t-1},l,r_{s+1},\dots,r_q),\\
	\{\phi_t-\varphi_{s,l-1}\}_q&=(\underbrace{p+1,\dots,p+1}_{t},\underbrace{p-1,\dots,p-1}_{s-t-1},l-1,r_{s+1},\dots,r_q).
\end{align*}
Thus the value of \eqref{term-1} is
\begin{equation}\label{term-1-1}
\Phi_q(\phi_t-\varphi_{s,{l-1}})-\Phi_q(\phi_t-\varphi_{s,l})=\left\{
\begin{array}{ll}
	-2l+2(n+p+s-t)+1, & s\le t,\\
	-2l-2(q-s)-1,& s>t.
\end{array}
\right.
\end{equation}

	Now compute \eqref{term-2}. Let $(K(\varphi_{s,l})\mid \phi_t)+2\rho_c=(x_1,\dots,x_p\mid y_1,\dots,y_q)$. Note that
	\[K(\varphi_{s,l})-K(\varphi_{s,l-1})=(\underbrace{0,\dots,0,-1}_{p-l+1},0,\dots,0),\]
	and $x_{p-l+1}=q-s+2l$, we have
	\begin{equation}\label{term-2-1}
		N(K(\varphi_{s,l})\mid \phi_t)-N(K(\varphi_{s,l-1})\mid \phi_t)=x_{p-l+1}^2-(x_{p-l+1}+1)^2+2(q+l)=-2l+2s-1.
	\end{equation}
On the other hand,
\begin{equation}\label{term-2-2}
	\begin{split}
	M(K(\varphi_{s,l})\mid \phi_t)-M(K(\varphi_{s,l-1})\mid\phi_t)&=\sum_{j=1}^q(\min\{x_{q-l+1},y_j\}-\min\{x_{q-l+1}+1,y_j\})\\
	&=-2\,\mathrm{card}\{j\mid y_j\ge x_{q-l+1}+1\}\\
	&=-2\, \mathrm{card}\{j\mid y_j\ge q-s+2l+1\}.
	\end{split}
\end{equation}
Finally, $e(K(\varphi_{s,l})\mid \phi_t)-e(K(\varphi_{s,l-1})\mid\phi_t)$ is clearly independent of $r_{s+1},\dots,r_q$. Since \eqref{term-1} and \eqref{term-2} remain unchanged when $r_{s+1}=\cdots=r_q=0$, then
\[F_t(\varphi_{s,l-1})-F_t(\varphi_{s,l})=F_t(\varphi_{s,l-1}^0)-F_t(\varphi_{s,l}^0).\]

Note that $|e(K(\varphi_{s,l})\mid \phi_t)-e(K(\varphi_{s,l-1})\mid \phi_t)|\le 1$, then by \eqref{L-N-M}, \eqref{term-1-1}, \eqref{term-2-1}, \eqref{term-2-2},
\[\sigma_{t,s}(l)-\sigma_{t,s}(l+1)\ge 4-2-1=1>0.\]
This shows that $\sigma_{t,s}(l)$ decreases as $l$ increases and we done.
\end{proof}
\begin{lemma}\label{lem-2}
	Fix $\rho_n^{(w)}\in\Omega_{p,q}$ as in \eqref{rho-n-w}. Assume in addition that $k_1=q$.
	Then for $0\le t\le q$,
	\begin{equation}\label{eq3}
		D_w(K_w\mid \phi_t)\le B_0.
	\end{equation}
\end{lemma}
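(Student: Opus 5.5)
The plan is to reduce to Lemma \ref{lem-p} for $p\ge 2$ and to treat $p=1$ by hand. First note the identity $F_t(R_w)=D_w(K_w\mid\phi_t)$. Since $k_1=q$, the element $(K_w\mid R_w)$ is exactly $(K(R_w)\mid R_w)$, so $F_t(R_w)$ is $D_w(K_w\mid\phi_t)$ by definition \eqref{F-t-v}. This equality is already asserted in Lemma \ref{lem-p}. So for $p\ge2$ it suffices to verify the hypothesis
\[
F_t(\varphi_s)\le B_0\quad\text{for all } 0\le s,t\le q,
\]
where, by \eqref{K-phi-s}, the relevant $\rho_n^{(w)}$ is $(q,q-s,\dots,q-s\mid \varphi_s)$.

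For the explicit check I would compute both pieces of $F_t(\varphi_s)$ in closed form.

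\emph{The $\frk$-norm term.} This is $\Phi_p(0)+\Phi_q(\phi_t-\varphi_s)$, since the $a$-part cancels. The vector $\{\phi_t-\varphi_s\}_q$ has entries $2p$, $p+1$, $p-1$, $0$ with explicit multiplicities depending on whether $s\le t$ or $s>t$, so $\Phi_q$ is a finite sum of squares.

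\emph{The lambda-norm term.} I would compute $L(K(\varphi_s)\mid\phi_t)$ via \eqref{L-1}, using
\[
x=(3p+q,\;q-s+2(p-1),\;\dots,\;q-s+2)\quad\text{and}\quad y=(2p+2q,\dots,2p+2(q-t+1),\;2(q-t),\dots,2).
\]
The cross term $M$ is a count of $\min\{x_i,y_j\}$, and $e$ counts coincidences, of which there are at most $p$.

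Next I would study $G(s,t):=F_t(\varphi_s)$ as a function of $s$ and $t$. The aim is to show $G$ is maximized at boundary values, presumably at $s=q$ (so that $K=(q,0,\dots,0)$) or $s=0$, with $t=0$ or $t=q$. The tool is difference computations of the same type as in \eqref{term-1-1}–\eqref{term-2-2}: the increments in $t$ (respectively $s$) should be monotone, which makes $G$ convex in each variable. The boundary values can then be compared against the extremal configurations already computed in Lemmas \ref{lem1} and \ref{lemma-etak}. For instance, $(q,0,\dots,0\mid\phi_t)$ relates to the pencil weights $\eta_0+m\beta$. The Kronecker term $4\delta_{p,q}$ in $B_0$ should absorb the $e(\mu)/2$ corrections. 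Where a direct comparison is awkward, I would instead use monotonicity of $L$ (Proposition \ref{lambda-up}) together with \eqref{pbeta1}, bounding $L(K(\varphi_s)\mid\phi_t)\ge L(\texttt{triv})$ plus an explicit gain.

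For $p=1$, $\Omega_{1,q}$ consists of $(q\mid 0,\dots,0)$ and elements with $k_1<q$. The assumption $k_1=q$ therefore forces $R_w=0=\varphi_s$, and $F_t(0)$ can be computed directly as above.

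The main obstacle I expect is the uniform verification $F_t(\varphi_s)\le B_0$ in the middle range of $(s,t)$. There the term $2\sum\min\{x_i,y_j\}$ changes piecewise, depending on how the block $q-s+2i$ interleaves with the even numbers $2j$. I would first try to show that the second differences in $t$ are nonnegative, since the $\Phi_q$ part gains at least $2$ per step while $M$ loses at most $2$. This reduces the problem to $t\in\{0,q\}$, leaving a one-variable inequality in $s$ to check by a direct polynomial comparison with $B_0$.
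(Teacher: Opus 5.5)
Your skeleton is the paper's: treat $p=1$ by hand, use Lemma~\ref{lem-p} to reduce to $F_t(\varphi_s)\le B_0$, reduce to the endpoints $t\in\{0,q\}$, then check an inequality in $s$. The gap is in how you reduce in $t$. The second differences of $t\mapsto F_t(\varphi_s)$ are \emph{not} nonnegative. Write $(K(\varphi_s)\mid\phi_t)+2\rho_c=(x_1,\dots,x_p\mid y_1,\dots,y_q)$; note $x_1=q+2p$, not $3p+q$. For $t>s$ the $\Phi_q$-increment is $4p^2+4pQ_t+2s(p-1)$ and the $L$-increment is $4pQ_t+\sum_i d_i$, so
\[
F_t(\varphi_s)-F_{t-1}(\varphi_s)=4p^2+2s(p-1)-\sum_{i=1}^p d_i,\qquad d_i=2p-h(x_i-2p-2Q_t)+h(x_i-2Q_t).
\]
Each $d_i$ is a nondecreasing function of $x_i-2Q_t$, rising from $0$ to $4p$. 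Since $x_i-2Q_t$ grows with $t$, these first differences are nonincreasing in $t$ for $t>s$. So $F_t(\varphi_s)$ is concave there, typically strictly. For example, with $p=2$, $q=10$, $s=0$ the first differences for $t=1,\dots,10$ are $16,16,16,\tfrac{31}{2},\tfrac{23}{2},\tfrac92,\tfrac12,0,0,0$. Your heuristic that ``$\Phi_q$ gains while $M$ loses at most $2$'' points the wrong way, and convexity cannot be what puts the maximum at the endpoints.

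What is true, and what the paper proves, is a sign pattern for the first differences. Since $0\le d_i\le 4p$, the displayed difference is $\ge 2s(p-1)\ge 0$ for $t>s$. For $t\le s$ the difference is $4(p+Q_t)-4pQ_t-\sum_i d_i$, which is $\le 0$ when $p\ge 2$: use $Q_t\ge 2$, or $d_1\ge 4p-\tfrac12$ when $Q_t=1$. Hence $F_t(\varphi_s)$ first decreases and then increases, giving $F_t(\varphi_s)\le\max\{F_0(\varphi_s),F_q(\varphi_s)\}$. For $p=1$ the same argument applies, since the difference is $4-d_1\ge 0$.

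The endpoint $t=0$ needs no computation: $\Phi_q(\varphi_s)\le\Phi_q(\varphi_q)$ and Proposition~\ref{lambda-up} give $F_0(\varphi_s)\le F_0(\varphi_q)=C_0\le B_0$.

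The endpoint $t=q$ is where the real work lies, and ``direct polynomial comparison'' does not yet say how to control $M$, which depends on how the $x_i$ interleave with the $y_j$. Falling back on $L\ge L(\texttt{triv})$ plus a gain would lose almost everything here, because $\Phi_q(\phi_q-\varphi_s)$ is of size roughly $\sum_j(2p+j)^2$. The paper handles $q>p$ with the lower bound $\min\{k_i+2P_i,2p+2Q_j\}\ge 2\min\{P_i,Q_j\}+\min\{k_i,2p\}$ together with $e\le p$, splitting into $q\le 2p$ and $q>2p$. For $q=p$ it shows $F_p(\varphi_s)$ is decreasing in $s$ and bounds $F_p(\varphi_0)$ directly. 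The resulting margin $(p-2)(p-3)(p+2)/3$ vanishes at $p=2,3$, so the estimate of $M$ must be essentially sharp there and the $4\delta_{p,q}$ in $B_0$ is fully used.
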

\begin{proof}
	We first prove the case $p=1$. In this case, $\rho_n^{(w)}=(q\mid 0,\dots,0)$. Set
	$$d=M(K_w\mid \phi_t)-M(K_w\mid \phi_{t-1})=\min\{q+2,2Q_t+2\}-\min\{q+2,2Q_t\},$$
	then $2\ge d\ge 0$. By \eqref{L-1},
	\[D_w(K_w\mid \phi_t)-D_w(K_w\mid \phi_{t-1})=4-2d+\dfrac{1}{2}\left(\delta_{q+2,2Q_t+2}-\delta_{q+2,2Q_t}\right)\ge 0.\]
	Hence $D_w(K_w\mid \phi_t)$ is increasing with respect to $t$. We have
	\[D_w(K_w\mid \phi_q)=\dfrac{(q+1)^2}{2}.\]
	When $q=1$, $B_0-D_w(K_w\mid \phi_q)=5/2$. When $q\ge 2$,
	\[B_0-D_w(K_w\mid \phi_q)=\dfrac{3q(q-2)}{2}\ge 0.\]
	This completes the case $p=1$.
	
	Now assume $p\ge 2$. By Lemma \ref{lem-p}, we only need to consider $F_t(\varphi_s)$, where $1\le t,s\le q$. Recall $K(\varphi_s)$ from \eqref{K-phi-s}. Fix $s$. We also write $K(\varphi_s)$ as $(k_1, \dots, k_p)$. Set $x_i=k_i+2P_i$. We compute $F_t(\varphi_s)-F_t(\varphi_{s-1})$, which is the sum of
	\begin{equation}\label{Term-1}
		\|(K(\varphi_s)\mid\phi_t) - (K(\varphi_s)\mid\varphi_s)\|_{\frk}^2 - \|(K(\varphi_s)\mid\phi_{t-1}) - (K(\varphi_s)\mid\varphi_s)\|_{\frk}^2
	\end{equation}
	and
	\begin{equation}\label{Term-2}
		L(K(\varphi_s)\mid\phi_t)- L(K(\varphi_s)\mid\phi_{t-1}).
	\end{equation}

	First compute \eqref{Term-2}. For $1\le i\le p$, set
	\begin{align*}
		d_i&=\big((x_i+2p+2Q_t)-h(x_i-2p-2Q_t)\big)-\big((x_i+2Q_t)-h(x_i-2Q_t)\big)\\
		&= 2p-\big(h(x_i-2p-2Q_t)-h(x_i-2Q_t)\big).
	\end{align*}
	Then
	\[0\le d_i\le 4p.\]
	By \eqref{M-mu-e-mu},
	\begin{equation}\label{M-E}
		\sum_{i=1}^pd_i=\left(M(K(\varphi_s)\mid \phi_t)-\dfrac{e(K(\varphi_s)\mid \phi_t)}{2}\right)-\left(M(K(\varphi_s)\mid \phi_{t-1})-\dfrac{e(K(\varphi_s)\mid \phi_{t-1})}{2}\right).
	\end{equation}
	From \eqref{L-N-M} and \eqref{M-E}, we have
	\[L(K(\varphi_s)\mid \phi_t)-L(K(\varphi_s)\mid \phi_{t-1})=4pQ_t+\sum_{i=1}^pd_i.\]
	
	On the other hand, Note that
	\[\|(K(\varphi_s)\mid\phi_t) - (K(\varphi_s)\mid\varphi_s)\|_{\frk}^2=\Phi_q(\phi_t-\varphi_s).\]
	When $t\le s$,
	\begin{align*}
		\{\phi_t-\varphi_s\}_q&=(\underbrace{p+1,\dots,p+1}_{t},\underbrace{p-1,\dots,p-1}_{s-t},0,\dots,0),\\
		\{\phi_{t-1}-\varphi_s\}_q&=(\underbrace{p+1,\dots,p+1}_{t-1},\underbrace{p-1,\dots,p-1}_{s-t+1},0,\dots,0).
	\end{align*}
	When $t>s$,
	\begin{align*}
		\{\phi_t-\varphi_s\}_q&=(\underbrace{2p,\dots,2p}_{t-s},\underbrace{p+1,\dots,p+1}_{s},0,\dots,0),\\
		\{\phi_{t-1}-\varphi_s\}_q&=(\underbrace{2p,\dots,2p}_{t-s-1},\underbrace{p+1,\dots,p+1}_{s},0,\dots,0).
	\end{align*}
	Therefore the value of \eqref{Term-1} is
	\[\Phi_q(\phi_t-\varphi_s)-\Phi_q(\phi_{t-1}-\varphi_s)=\left\{
	\begin{array}{ll}
		4(p+Q_t), & t\le s,\\
		4p^2+4pQ_t+2s(p-1), & t>s.
	\end{array}
	\right.\]
	Thus
	\[F_t(\varphi_s)-F_{t-1}(\varphi_{s})=\left\{
	\begin{array}{ll}
		4(p+Q_t)-4pQ_t-\sum_{i=1}^pd_i, & t\le s,\\
		4p^2+2s(p-1)-\sum_{i=1}^pd_i, & t>s.
	\end{array}
	\right.\]
	Since $p\ge 2$, the term $4(p+Q_t)-4pQ_t$ is non-positive for $Q_t\ge 2$. When $Q_t=1$, note that $x_1=q+2p\ge 2p+2=2p+Q_t$, so $d_1\ge 4p-1/2$. Hence $F_t(\varphi_s)-F_{t-1}(\varphi_{s})\le 0$ whenever $t\le s$. On the other hand, since $d_i\le 4p$, we have $\sum_{i=1}^pd_i\le 4p^2$, so $F_t(\varphi_s)-F_{t-1}(\varphi_{s})>0$ when $t>s$. This shows
	\[F_t(\varphi_s)\le \max\{F_0(\varphi_s), F_q(\varphi_{s})\}.\]
	
	Consider the case $t=0$. By Proposition \ref{lambda-up}, $L(K(\varphi_s)\mid 0)\ge L(K(\varphi_q)\mid 0)$. Since the coordinates of $\varphi_s$ are all at most $p-1$, it is clear that
	\[\Phi_q(\varphi_s)\le \Phi_q(\varphi_q).\]
	Hence
	\[F_0(\varphi_s) \le \Phi_p(0)+\Phi_q(\varphi_q)-L(K(\varphi_q)\mid 0)=F_0(\varphi_q).\]
Note that $F_0(\varphi_q)=C_0$,	where $C_0$ is in defined in the proof of Lemma \ref{lemma-etak}. It has been shown there that $C_0\leq B_0$. Thus $F_0(\varphi_q)\le B_0$. Now it remains to show $F_q(\varphi_s)\le B_0$. This will be done in the next lemma.
\end{proof}

\begin{lemma}
Suppose $p\ge 2$. We have $F_q(\varphi_s)\le B_0$ for $0\leq s\leq q$.
\end{lemma}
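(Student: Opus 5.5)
We need $F_q(\varphi_s)=\Phi_p(0)+\Phi_q(\phi_q-\varphi_s)-L(K(\varphi_s)\mid\phi_q)$, since the $a$-parts of $(K(\varphi_s)\mid\phi_q)$ and $(K(\varphi_s)\mid\varphi_s)$ cancel. The plan is to show that $F_q(\varphi_s)$ is monotone in $s$, and then compare the endpoint with the reductions of Lemma \ref{lemma-etak} or Lemma \ref{lem1}.

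\emph{Spin part.} We have $\phi_q=(2p,\dots,2p)$. By \eqref{K-phi-s}, $K(\varphi_s)=(q,q-s,\dots,q-s)$, and
\[
\{\phi_q-\varphi_s\}_q=(\underbrace{2p,\dots,2p}_{q-s},\underbrace{p+1,\dots,p+1}_{s}).
\]
Hence $\Phi_q(\phi_q-\varphi_s)-\Phi_q(\phi_q-\varphi_{s-1})=-(p-1)(2Q_s+3p+1)$ in closed form. This is an explicit negative quantity, linear in the position $Q_s=q-s+1$.

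\emph{Lambda part.} Set $x_1=q+2p$ and $x_i=q-s+2P_i$ for $i\ge 2$, and $y_j=2p+2Q_j$. Passing from $s-1$ to $s$ lowers $x_2,\dots,x_p$ by one each while $b$ is fixed. Using \eqref{L-N-M} and \eqref{M-mu-e-mu}, the change in $N$ is $\sum_{i\ge2}(-2x_i+1+2(n-i+1))$ (with the old $x_i$). The change in $M-e/2$ is a sum of $-1+h$-differences, each lying in $[-2,0]$, and it counts pairs with $y_j\ge x_i$. So $L(K(\varphi_s)\mid\phi_q)-L(K(\varphi_{s-1})\mid\phi_q)$ is explicit up to these counting terms. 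I would bound it below in the same way \eqref{diff-2} was handled, using $h_k\ge l_k$-type inequalities.

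\emph{Combining.} The goal is to show that $F_q(\varphi_s)-F_q(\varphi_{s-1})$ has constant sign, or at least that $F_q(\varphi_s)$ is unimodal, so that
\[
F_q(\varphi_s)\le\max\{F_q(\varphi_0),F_q(\varphi_q)\}.
\]
The two endpoints are then handled separately.
\begin{itemize}
\item[(i)] At $s=q$ the weight is $(q,0,\dots,0\mid 2p,\dots,2p)$. I would lower-bound $L$ at this point by Proposition \ref{lambda-up}, comparing with a dominated weight such as $\eta_0$, or with $\texttt{triv}$ via \eqref{pbeta1}. Then I would check $\Phi_q((p+1,\dots,p+1))+\Phi_p(0)-L\le B_0$ directly. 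This should reduce to a polynomial inequality in $p,q$, with the cases $q=p$ and $q=p+1$ done separately.
\item[(ii)] At $s=0$ the weight is $(q,\dots,q\mid 2p,\dots,2p)$. This is u-small extremal, and its spin-type quantity is $\Phi_q(2p,\dots,2p)$. For $L$ I would use the formula \eqref{L-1} exactly, since all $x_i-y_j$ are explicit. The comparison with $B_0$ is then a direct computation, in the spirit of the $C_k$ computations in Lemma \ref{lemma-etak}.
\end{itemize}

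\emph{Main obstacle.} The hard step is the $h$/min counting term in the lambda difference. Its sign may vary with $s$, so strict monotonicity could fail. If it does, I would fall back on the convexity trick of Lemma \ref{lem-p}. That means showing that the increments $F_q(\varphi_s)-F_q(\varphi_{s-1})$ are monotone in $s$: the spin increment is linear in $s$, and the lambda counting term changes by at most a bounded amount per step. Monotone increments make $F_q(\varphi_s)$ convex in $s$, so its maximum is attained at an endpoint $s\in\{0,q\}$. Both endpoints are then handled by the explicit computations in (i) and (ii).
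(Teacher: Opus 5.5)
\textbf{There is a genuine gap.} Your reduction $F_q(\varphi_s)\le\max\{F_q(\varphi_0),F_q(\varphi_q)\}$ is false in general, and your convexity fallback goes the wrong way.

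Carry out the increment computation you sketch, with one correction: the entry that changes sits in position $q-s+1$, so the spin increment is $-(p-1)(3p+1+2s)$, not $-(p-1)(2Q_s+3p+1)$. Together with the $N$-increment $(p-1)(2s-p-1)$ this gives
\[
F_q(\varphi_s)-F_q(\varphi_{s-1})=-(p-1)(2p+4s)+\sum_{i=2}^{p}\sum_{j=1}^{q}c_{ij}.
\]
Here $c_{ij}$ equals $0,\tfrac12,\tfrac32,2$ according as $2j-2i-q-s+1$ is $\ge 2$, $=1$, $=0$, $\le -1$.

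When $q$ is large compared to $p$, the double sum is roughly $(p-1)(p+q+s)$. The increment is then roughly $(p-1)(q-p-3s)$, which is positive for small $s$ and \emph{decreasing} in $s$. So $F_q(\varphi_s)$ is concave in $s$ with an interior peak, the opposite of what the trick from Lemma~\ref{lem-p} needs. Concrete counterexamples:
\begin{itemize}
\item[] for $p=2$, $q=5$: $F_5(\varphi_0)=50$, $F_5(\varphi_1)=50.5$, $F_5(\varphi_5)=18$;
\item[] for $p=3$, $q=6$: $F_6(\varphi_0)=125.5$, $F_6(\varphi_1)=126.5$, $F_6(\varphi_6)=24.5$.
\end{itemize}

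Your endpoint plan (i) is also too lossy. Replacing $L(q,0,\dots,0\mid 2p,\dots,2p)$ by $L(\eta_0)$ via Proposition~\ref{lambda-up} discards the large contribution of the $b$-part $(2p,\dots,2p)$. For $p=q=2$ it gives the bound $5+41-5.5=40.5>B_0=17$, while the true value is $4.5$.

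\textbf{What is missing for $q>p$} is an estimate valid uniformly in $s$, not a reduction to endpoints. The paper gets it by bounding $L(K(\varphi_s)\mid\phi_q)$ from below directly, using three ingredients:
\begin{itemize}
\item[] the bound $e\le p$;
\item[] the termwise inequality $\min\{k_i+2P_i,2p+2Q_j\}\ge 2\min\{P_i,Q_j\}+\min\{k_i,2p\}$ for $M$;
\item[] exact expressions for $N$ and for $\Phi_q(\phi_q-\varphi_s)$ as functions of $s$.
\end{itemize}
This yields an explicit lower bound for $\tfrac12\big(B_0-F_q(\varphi_s)\big)$ that is nonnegative for every $0\le s\le q$, treated separately for $q\le 2p$ and $q>2p$.

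Only for $q=p$ does monotonicity in $s$ work. There $F_p(\varphi_s)$ really is decreasing: your increment formula gives at most $-4s(p-1)$, since the double sum is at most $2p(p-1)$. In that case your reduction to $s=0$, followed by an explicit evaluation or lower bound of $L(p,\dots,p\mid 2p,\dots,2p)$ from \eqref{L-1}, is what the paper does.
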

\begin{proof}
Set $y_j=2p+2Q_j$, then $$(K(\varphi_s)\mid\phi_q)+2\rho_c=(x_1,\dots,x_p\mid y_1,\dots,y_q).$$
	
	Firstly, assume that $q>p$. By \eqref{L-N-M} and $e(K(\varphi_s)\mid\phi_q)\le p$,
	\begin{equation}\label{eq-1}
		\begin{aligned}
			L(K(\varphi_s)\mid\phi_q)\ge N(K(\varphi_s)\mid \phi_q)+M(K(\varphi_s)\mid\phi_q)+\sum_{u=1}^nu^2-\dfrac{p}{2}.
		\end{aligned}
	\end{equation}
		Note that
	\[\min\{k_i+2P_i,2p+2Q_j\}\ge 2\min\{P_i,Q_j\}+\min\{k_i,2p\}.\]
	Then
	\begin{equation}\label{q-0}
		M(K(\varphi_s)\mid\phi_q)\ge4\sum_{i=1}^p\sum_{j=1}^q\min\{i,j\}+2q\big(\min\{q,2p\}+(p-1)\min\{q-s,2p\}\big).
	\end{equation}
	A straightforward computation gives
	\begin{equation}\label{q-1}
		\sum_{i=1}^p\sum_{j=1}^q\min\{i,j\}=\dfrac{p(p+1)(3q-p+1)}{6}
	\end{equation}
	and
	\begin{equation}\label{q-2}
		 N(K(\varphi_s)\mid \phi_q)=\sum_{i=1}^p\big(x_i^2-2(q+P_i)x_i\big)=-q(q+2p)+(p-1)(s^2-ps-q^2-pq).
	\end{equation}
	
		On the other hand,
	\begin{equation}\label{q-3}
		\Vert(K(\varphi_s)\mid\phi_q)-(K(\varphi_s)\mid \varphi_s)\Vert_\frk^2=\sum_{i=1}^pi^2+\sum_{j=s+1}^q(2p+j)^2+\sum_{j=1}^{s}(p+1+j)^2.
	\end{equation}

\begin{itemize}
\item [$\bullet$]	If $q\le 2p$, then $\min\{q,2p\}=q$ and $\min\{q-s,2p\}=q-s$. Combining \eqref{B0}, \eqref{eq-1}, \eqref{q-0}, \eqref{q-1}, \eqref{q-2} and \eqref{q-3}, we obtain
	\[\dfrac{B_0-F_q(\varphi_s)}{2}\ge (p-1)s^2+(q-p-1)\big(q+(p-1)(q-s)\big)\ge 0.\]
\item [$\bullet$]	If  $q>2p$, then $\min\{q,2p\}=2p$. Similarly, we get
	\[\dfrac{B_0-F_q(\varphi_s)}{2}\ge (p-1)\big(s(s+1)+q\min\{q-s,2p\}-p(q-s)\big)\ge 0.\]
\end{itemize}
Thus $F_q(\varphi_s)\le B_0$ whenever $q>p$. This finishes the case $q>p$.

Secondly, assume that $q=p$. We \emph{claim} that $F_p(\varphi_s)$ decreases as $s$ increases. Indeed, from
	\begin{align*}
	\{\phi_p-\varphi_{s}\}_p&=(\underbrace{2p,\dots,2p}_{p-s},p+1,\dots,p+1),\\
		\{\phi_p-\varphi_{s+1}\}_p&=(\underbrace{2p,\dots,2p}_{p-s-1},p+1,\dots,p+1),
	\end{align*}
	we have
	\begin{align*}
		\Phi_q(\phi_p-\varphi_{s})-\Phi_q(\phi_p-\varphi_{s+1})&=(2p+s+1)^2-(p+s+2)^2\\
		&=(p-1)(3p+2s+3)\\
		&\ge(p-1)(3p+3).
	\end{align*}
	On the other hand, since
	\[K(\varphi_s)-K(\varphi_{s+1})=(0,1,\dots,1),\]
	then
	\[N(K(\varphi_s)\mid\phi_p)-N(K(\varphi_{s+1})\mid\phi_p)=\sum_{i=1}^{p-1}(2i-2s-1)=(p-1)(p-1-2s)\le (p-1)^2.\]
	Note that
	\[M(K(\varphi_s)\mid\phi_p)-M(K(\varphi_{s+1})\mid\phi_p)=2\sum_{i=2}^p\sum_{j=1}^p(\min\{x_i,y_j\}-\min\{x_i-1,y_j\})\le 2p(p-1)\]
	and $|e(K(\varphi_s)\mid\phi_p)-e(K(\varphi_{s+1})\mid\phi_p)|\le p-1$, then by \eqref{L-N-M},
	\[F_p(\varphi_s)-F_p(\varphi_{s+1})\ge (p-1)(3p+3)-(p-1)^2-2p(p-1)-\dfrac{p-1}{2}=\dfrac{7}{2}(p-1)>0.\]
	This proves the claim. Hence it suffices to show $F_p(\varphi_0)\le B_0$.
	
	Note that $(K(\varphi_0)\mid \phi_p)=(p,\dots,p\mid 2p,\dots,2p)$. If $i\le j$, then
$$
\min\{p+2i,2p+2j\}=p+2i.
$$
If $i>j$, then
	\[\min\{p+2i,2p+2j\}=p+2j+\min\{2(i-j),p\}\ge p+2j+(i-j+1)\]
since $p\ge i-j+1$. 	 By \eqref{q-1},
	\[\sum_{i=1}^p\sum_{j=1}^p\min\{i,j\}=\dfrac{p(p+1)(2p+1)}{6}.\]
	For $i>j$, we have
	\[\sum_{p\ge i>j\ge 1}(i-j+1)=\dfrac{p(p-1)(p+4)}{6}.\]
	Therefore,
	\begin{align*}
		\dfrac{1}{2} M(K(\varphi_0)\mid \phi_p)&\ge p^3+2\sum_{i=1}^p\sum_{j=1}^p\min\{i,j\}+\sum_{i>j}(i-j+1)\\
		&=p^3+\dfrac{5p^3+9p^2-2p}{6}.
	\end{align*}
	Moreover,
	\[N(K(\varphi_0)\mid\phi_p)=\sum_{i=1}^p\big((p+2i)^2-2(p+i)(p+2i)\big)=-2p^3-p^2.\]
	Since $e(K(\varphi_0)\mid \phi_p)\le p$, then \eqref{L-N-M} yields
	\begin{align*}
		L(K(\varphi_0)\mid \phi_p)&= N(K(\varphi_0)\mid\phi_p)+M(K(\varphi_0)\mid\phi_p)+\sum_{u=1}^{2p}u^2-\dfrac{e(K(\varphi_0)\mid \phi_p)}{2}\\
		&\ge -p^2+\dfrac{5p^3+9p^2-2p}{3}+\dfrac{2p(2p+1)(4p+1)}{6}-\dfrac{p}{2}\\
		&=\dfrac{26p^3+24p^2-5p}{6}.
	\end{align*}
	On the other hand,
	\[\Vert(K(\varphi_0)\mid\phi_t)-(K(\varphi_0)\mid\varphi_p)\Vert_\frk^2=\sum_{i=1}^pi^2+\sum_{i=1}^p(2p+i)^2=\dfrac{20p^3+9p^2+p}{3}.\]
	So \eqref{B0} gives
	\begin{align*}
		B_0-F_p(\varphi_0)&\ge \dfrac{16p^3-12p^2-p+24}{6}-\dfrac{20p^3+9p^2+p}{3}+\dfrac{26p^3+24p^2-5p}{6}\\
		&=\dfrac{(p-2)(p-3)(p+2)}{3}\ge 0.
	\end{align*}
	This proves the lemma.
\end{proof}

\section{Proof of Theorem \ref{thm-main}: the u-small case}\label{sec-u-small}
\begin{prop}
	Take $\mu$ as in \eqref{mu}. Assume in addition that $\mu$ is in $\caU_{p,q}$ and $a_1<q$, $b_1<p$. If $q-a_1>p-b_1$, then $B(\mu)\le B_0$.
\end{prop}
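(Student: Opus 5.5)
Throughout, ``$\le B_0$'' for $B(\mu)$ means the squared-norm comparison: I will bound $\Vert\mu+m\beta\Vert_{\rm spin}^2-L(\mu)$, as is done in Lemmas \ref{lem1} and \ref{lemma-etak}. The plan is to exhibit one good $m$ and one good $w$, then reduce to the weights $\zeta_r$ of Lemma \ref{lem1}.

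\textbf{Choice of $m$ and $w$.} Put $m:=p-b_1$. Since $q-a_1>p-b_1$, Lemma \ref{m}(b) gives $m=\min\{q-a_1,p-b_1\}\le \caT(\mu)$. So $\mu+m\beta$ is still u-small and lies in $\mathrm{Pencil}(\mu)$. Take $\rho_n^{(w_p)}=(q-1,\dots,q-1\mid p,0,\dots,0)$. By \eqref{spin} it suffices to prove
\[
G(\mu):=\Vert \mu+m\beta-\rho_n^{(w_p)}\Vert_\frk^2-L(\mu)\le B_0 .
\]
Explicitly,
\[
\mu+m\beta-\rho_n^{(w_p)}=(a_1+m-q+1,\,a_2-q+1,\dots,a_p-q+1\mid 0,\,b_2,\dots,b_q).
\]
The hypothesis $q-a_1>m$ gives $a_1+m\le q-1$. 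Hence every $\frak{a}$-coordinate is $\le 0$, and its absolute value is $q-1-a_1-m$, respectively $q-1-a_i$.

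\textbf{Step 1: reduction to $a=0$.} Replacing $a$ by $0$ only enlarges these absolute values, since each $|a_i-(q-1)|$ is nonincreasing in $a_i$ on $[0,q-1]$. The shift $m=p-b_1$ does not depend on $a$, so it is unchanged. Thus the $\frk$-norm term weakly increases, by \eqref{weakly-dominate} (Lemma \ref{lem-Phi}). Also $\mu\gg(0\mid b)$, so Proposition \ref{lambda-up}, applied repeatedly, gives $L(0\mid b)\le L(\mu)$. Therefore $G(\mu)\le G(0\mid b)$. Note that $(0\mid b)$ still satisfies all the hypotheses: it is u-small, with $0<q$ and $b_1<p$.

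\textbf{Step 2: reduction in $b$ by convexity.} Fix $r:=b_1<p$. Then $b'=(b_2,\dots,b_q)$ is a decreasing integral vector with entries in $[0,r]$. I write $b'$ as a convex combination of the integral extreme vectors $(r,\dots,r,0,\dots,0)$, with $s$ copies of $r$ for $0\le s\le q-1$, exactly as in the proof of Lemma \ref{lem-p}. Since $b_1$ and hence $m$ stay fixed, the map
\[
b'\mapsto \Vert(0\mid r,b')+m\beta-\rho_n^{(w_p)}\Vert_\frk^2-L(0\mid r,b')
\]
is convex along such combinations. The argument is that of Proposition \ref{Jensen}(ii): the $\frk$-norm part is $\Phi_q$ of a vector depending affinely on $b'$, so it is a maximum of convex functions, and the lambda part contributes $-L$, which is linear plus a sum of convex terms $h(x_i-y_j)$. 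Jensen's inequality then reduces the claim to the weights
\[
\zeta_{r,s}:=(0,\dots,0\mid r,\underbrace{r,\dots,r}_{s},0,\dots,0).
\]
For $s=0$ this is exactly $\zeta_r$, and Lemma \ref{lem1} gives the bound $B_0$.

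\textbf{Step 3: the weights $\zeta_{r,s}$ with $s\ge1$ (main obstacle).} Passing from $s-1$ to $s$ changes the two terms as follows.
\begin{itemize}
\item[(1)] In the $\frk$-norm, one more coordinate $r$ enters the sorted $\frak{b}$-block. By a computation like \eqref{term-1-1}, the increase is $r^2+2r(\text{position})$.
\item[(2)] $L$ increases by at least that same amount. Arguing as in \eqref{eq2}, the $N$-part already supplies $r^2+2Q_{s+1}r$. The $M-e/2$ part is nondecreasing by the monotonicity of $h$ used in Lemma \ref{lem-1}.
\end{itemize}
I expect this to give $G(\zeta_{r,s})\le G(\zeta_{r,s-1})$, hence $G(\zeta_{r,s})\le G(\zeta_r)\le B_0$, which completes the proof.

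The delicate point is matching the position index in (1) against $Q_{s+1}$ in (2). If the naive count leaves a deficit of order $r$, I would first try replacing $w_p$ by an element $\rho_n^{(w)}$ whose $\frak{b}$-part is $(p,r,\dots,r,0,\dots)$, so that the new coordinates cancel. I would then redo the $s$-monotonicity in the style of Lemma \ref{lem-p}.
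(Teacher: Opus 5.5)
Your Step 3 cannot be repaired, because with $\rho_n^{(w_p)}$ held fixed the inequality you are reducing to is false. When $b_{s+1}$ passes from $0$ to $r$, the $\frk$-term rises by $r^2+2r(q-s+1)$. The $N$-part of $L$ rises only by $r^2+2rQ_{s+1}-2pr$, not by $r^2+2rQ_{s+1}$. The analogy with \eqref{eq2} breaks because there the moved coordinate starts at the cap value $x_i^*=q+2P_i$, whereas here $y_{s+1}$ starts at $2Q_{s+1}$. Since $h$ is $1$-Lipschitz, the $M-e/2$ part adds at most $2pr$. Hence $G(\zeta_{r,s})-G(\zeta_{r,s-1})\ge 2r>0$, so $G$ increases in $s$.

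A concrete instance: $p=2$, $q=4$, $\mu=(0,0\mid 1,1,0,0)$ satisfies every hypothesis, with $m=1$. Here $\{\mu+\beta-\rho_n^{(w_p)}\}=(3,2\mid 1,0,0,0)$, so $\Vert\mu+\beta-\rho_n^{(w_p)}\Vert_{\frk}^2=73$, while $L(\mu)=14$. Thus $G(\mu)=59>53=B_0$. Steps 1 and 2 are correct inequalities, but they lead to a false target, so no argument that uses $w_p$ for every $\mu$ can work.

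Your fallback is the right instinct, but you cannot insert it after Step 2. The Jensen inequality of Proposition \ref{Jensen} holds for one fixed $w$, so you may not switch $w$ at each extreme point $\zeta_{r,s}$. The missing idea is to let $w$ depend on $\mu$ from the start, as the paper does: take $\rho_n^{(w)}=(K_w\mid R_w)$ with $R_w=(p,b_2,\dots,b_q)$.

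With this choice the $b$-block of $\mu+m\beta-\rho_n^{(w)}$ vanishes. Also $b_2\le b_1<p$ forces $k_1=q-1$, by condition (c) defining $\Omega_{p,q}$. So $|a_i-k_i|\le q-1$ for all $i\ge 2$, and $|a_1+m-k_1|=q-1-a_1-m\le q-1-m$. Hence $\{\mu+m\beta-\rho_n^{(w)}\}$ is weakly dominated by $(q-1,\dots,q-1,q-1-m\mid 0,\dots,0)$, which equals $\{\zeta_{b_1}+m\beta-\rho_n^{(w_p)}\}$ by \eqref{A}. Now \eqref{weakly-dominate}, $\mu\gg\zeta_{b_1}$, Proposition \ref{lambda-up} and Lemma \ref{lem1} finish the proof. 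No convexity and no reduction to $a=0$ are needed. In the example above this choice gives $55-14=41\le 53$.
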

\begin{proof}
	 By lemma \ref{m},  $p-b_1\le\caT(\mu)$. Take $R_w=(p,b_2,\dots,b_q)$. It uniquely determines an element $\rho_n^{(w)}=(K_w\mid R_w)\in \Omega_{p, q}$. Let $K_w=(k_1, \dots, k_p)$.
	
Note that
	\[\mu+(p-b_1)\beta=(a_1+p-b_1, a_2, \dots, a_p \mid p, b_2, \dots, b_q).\]
	Since $p>b_2$, we have $k_1=q-1$. Note that $a_1+(p-b_1)<q$ by our assumption, then
	\[|a_1+p-b_1-k_1|=(q-1)-(a_1+p-b_1)\le q-1-(p-b_1).\]
	Clearly, $0\le a_i\le q-1$ and $0\le k_i\le q-1$ for all $2\le i\le p$. This shows that $|a_i-k_i|\le q-1$ for $2\le i\le p$. By \eqref{A},
	\[\{\zeta_{b_1}+(p-b_1)\beta-\rho_n^{(w_{b_1})}\}=(q-1, \dots, q-1, q-1-(p-b_1)\mid 0, \dots, 0),\]
which weakly dominates $\{\mu+(p-b_1)\beta-\rho_n^{(w)}\}$. Therefore  by \eqref{weakly-dominate},	\[\Vert\mu+(p-b_1)\beta-\rho_n^{(w)}\Vert_\frk^2\le\Vert\zeta_{b_1}+(p-b_1)\beta-\rho_n^{(w_{b_1})}\Vert_\frk^2.\]
	Nota that $\mu\gg\zeta_{b_1}$. Then Proposition \ref{lambda-up} implies that $L(\mu)\ge L(\zeta_{b_1})$. Therefore,
	\[B(\mu)\le \Vert\zeta_{b_1}+(p-b_1)\beta-\rho_n^{(w_{b_1})}\Vert_\frk^2-L(\zeta_{b_1})\le B_0,\]
where the last step uses \eqref{weight-1}.
\end{proof}

\begin{lemma}\label{lem2}
Take $\mu$ as in \eqref{mu}. Assume in addition that $a_1=q>a_2$ and $b_1\leq p-1$. Then there exists $\rho_n^{(w)}\in \Omega_{p, q}$ such that
	\[\{\mu-\rho_n^{(w)}\}=(m_1, \dots, m_p \mid n_1, \dots, n_q)\]
satisfies
	\begin{itemize}
		\item[(i)] For any $1\le f\le p$,\ $\sum_{t=1}^f m_t\le (q-1)\min\{f, b_1\}$.
		\item[(ii)] For any $1\le g\le q$,\ $n_g\le p-1-b_1$.
	\end{itemize}
\end{lemma}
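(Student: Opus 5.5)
The plan is to build $\rho_n^{(w)}=(K_w\mid R_w)$ by hand. Most coordinates of $K_w$ will copy coordinates of $a$, and the remaining $b_1$ coordinates will be tuned so that the conjugate $R_w$ nearly reproduces $b$. Recall from the description of $\Omega_{p,q}$ that an element is determined by any decreasing $K_w$ with entries in $[0,q]$. Its $R_w$ is $r_j=\mathrm{card}\{i\mid k_i\le q-j\}$, which depends only on the multiset of the $k_i$. So I may first choose a multiset of $p$ numbers in $[0,q]$ and only at the end sort it.

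\emph{Step 1: choice of the multiset.} Take the value $q$, the values $a_2,\dots,a_{p-b_1}$, and $b_1$ further values $t_1,\dots,t_{b_1}\in[0,q-1]$. The $t_i$ are chosen so that $\mathrm{card}\{i\mid t_i\le q-j\}=b_j$ for every $1\le j\le q$. This is possible because $b_1\ge b_2\ge\cdots\ge b_q\ge 0$ with $b_1\le p-1$. Take the conjugate partition, $t_i=\mathrm{card}\{j\mid b_j\ge i\}$ read as $q-t_i=\mathrm{card}\{j\mid b_j\ge i\}$ (suitably indexed). For $j=1$ the count is $b_1$, which is automatic since every $t_i\le q-1$.

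\emph{Step 2: estimating the $n_g$.} The entry $q$ contributes to no $r_j$ with $j\ge1$, and each $a_i$ with $2\le i\le p-b_1$ contributes at most $1$. Hence
\[
b_j\le r_j\le b_j+(p-1-b_1).
\]
So $|b_j-r_j|\le p-1-b_1$ for all $j$, and after taking absolute values and sorting we get (ii).

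\emph{Step 3: estimating the $m_t$.} Consider first the unsorted pairing that matches $a_1=q$ with $q$, $a_i$ with $a_i$ for $2\le i\le p-b_1$, and $a_{p-b_1+1},\dots,a_p$ with $t_1,\dots,t_{b_1}$. Under this pairing all differences vanish except at most $b_1$ of them. Each of those is at most $q-1$ in absolute value, because both entries lie in $[0,q-1]$ (we have $a_i\le a_2<q$). Hence for this pairing the sum of the $f$ largest absolute differences is at most $(q-1)\min\{f,b_1\}$.

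The actual $\mu-\rho_n^{(w)}$ pairs the decreasing $a$ with the decreasing rearrangement $K_w$ of the multiset. So the key step is the classical rearrangement fact: if $x$ and $y$ are both sorted decreasingly, the vector $(|x_i-y_i|)_i$ is weakly submajorized by $(|x_i-y_{\sigma(i)}|)_i$ for every permutation $\sigma$. I would prove it by showing that an uncrossing transposition, which exchanges $y_{\sigma(i)}$ and $y_{\sigma(i')}$ when they are out of order, replaces the pair $\{|x_i-y_{\sigma(i)}|,|x_{i'}-y_{\sigma(i')}|\}$ by a pair with no larger maximum and no larger sum. This is a four-number case check. Iterating the transposition sorts $\sigma$ while preserving the partial-sum inequalities, and this gives (i).

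I expect Step 3 to be the main obstacle. Steps 1 and 2 are bookkeeping with conjugate partitions; the submajorization argument is where care is needed, though it is the same mechanism underlying Lemma \ref{lem-Phi}.
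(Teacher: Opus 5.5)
Your proof is correct, but it follows a different route from the paper's.

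\textbf{The paper's route.} The paper works on the $R$-side. It takes the element $(a\mid y)\in\Omega_{p,q}$ determined by $a$, and defines $R_w$ by clamping each $y_t$ into the band $[b_t-(p-1-b_1),\,b_t+(p-1-b_1)]$. With this choice (ii) holds by construction. For (i), each $|y_t-r_t|\le b_1$ and $y_1=r_1=p-1$, so $\sum_t|y_t-r_t|\le(q-1)b_1$. Lemma \ref{omegapq} (the Young-diagram symmetric-difference identity) transfers this bound to $\sum_t|a_t-k_t|$. Finally $k_1=a_1=q$ gives $m_t\le q-1$ for every $t$.

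\textbf{Your route.} You build $K_w$ directly as the multiset $\{q,a_2,\dots,a_{p-b_1}\}$ together with $b_1$ entries conjugate to $b$, namely $b_j-b_{j+1}$ copies of $q-j$. That description is cleaner than the garbled formula in your Step 1. Condition (ii) then reduces to the one-line count $b_j\le r_j\le b_j+(p-1-b_1)$. Condition (i) reduces to comparing the sorted pairing with an explicit unsorted pairing. This avoids Lemma \ref{omegapq} entirely, at the cost of a rearrangement inequality. Your uncrossing argument for that inequality is sound: both new signed differences lie between the two old ones, and the sum of absolute values does not increase, by convexity.

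\textbf{A simplification of Step 3.} You need less than full weak submajorization. The largest entry of your multiset is $q$ and it occurs once, so $k_1=a_1=q$ and all other $a_i,k_i$ lie in $[0,q-1]$. Hence every $m_t\le q-1$. The elementary $\ell^1$ fact that the sorted matching minimizes $\sum_i|a_i-k_{\sigma(i)}|$ gives $\sum_t m_t\le(q-1)b_1$. These two bounds together already yield $\sum_{t\le f}m_t\le(q-1)\min\{f,b_1\}$, exactly as in the paper's last step.
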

\begin{proof}
Note that there exists a unique element
	\[(a_1,a_2,\dots,a_p\mid y_1,\dots,y_q)\in\Omega_{p,q}.\]
Note that $y_1=p-1$ since $a_1=q>a_2$.

	For each $1\le t\le q$, define
\begin{equation}\label{r-t}
r_t=
\begin{cases}
b_t-(p-1-b_1),  & \text{ if } y_t<b_t-(p-1-b_1),\\
y_t,  & \text{ if } b_t-(p-1-b_1)\leq y_t \leq b_t+(p-1-b_1),\\
b_t+(p-1-b_1),  & \text{ if } y_t > b_t+(p-1-b_1).
\end{cases}
\end{equation}
	Then $r_1=p-1$ since $y_1=p-1$. Moreover, since
$$
y_1\ge\cdots\ge y_q, \quad b_1\ge\cdots\ge b_q,
$$
we have $r_1\ge\cdots\ge r_q\ge 0$. Thus $R_w=(r_1,\dots,r_q)$ determines a unique element
	\[\rho_n^{(w)}=(K_w\mid R_w)=(k_1,\dots,k_p\mid r_1,\dots,r_q)\in \Omega_{p,q}.\]
It is obvious from \eqref{r-t} that $|b_t-r_t|\le p-1-b_1$, which is (ii).

Note that $|y_t-r_t|\le b_1$.	
Indeed, by \eqref{r-t}, whenever $r_t\neq y_t$, there are two cases:
		\begin{itemize}
		\item[$\bullet$] $r_t<y_t$, then $y_t-r_t\le p-1-b_t-(p-1-b_1)=b_1-b_t\le b_1$.
		\item[$\bullet$] $r_t>y_t$, then $r_t-y_t\le r_t=b_t-(p-1-b_1)=b_1+b_t-(p-1)\le b_1$.
	\end{itemize}
	Since $y_1=r_1=p-1$, we have
		\[\sum_{t=1}^q|y_t-r_t|\le (q-1)b_1.\]
	By Lemma \ref{omegapq}, we have
	\[\sum_{t=1}^p|a_t-k_t|=\sum_{t=1}^q|y_t-r_t|\le (q-1)b_1.\]
	Since $a_1=k_1=q$, the equality $r_1=p-1$ implies $k_2\le p-1$. Also $a_2\le p-1$, so for any $1\le t\le p$, we have $m_t\le p-1$. Therefore
	\[\sum_{t=1}^fm_t\le\min\{(q-1)b_1,(q-1)f\}=(q-1)\min\{f, b_1\},\]
	which is exactly (i).
\end{proof}
\begin{prop}
Take $\mu$ as in \eqref{mu}.	Assume that $\mu$ is in $\caU_{p,q}$ and $a_1<q$, $b_1<p$ and that $q-a_1\le p-b_1$. If $q=p+1$, then $B(\mu)\le B_0+3/2$. Otherwise, $B(\mu)\le B_0$.
\end{prop}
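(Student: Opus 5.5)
We mirror the preceding proposition, with the roles of $a$ and $b$ exchanged, and reduce to the weights $\eta_k$ of Lemma \ref{lemma-etak}. Put $k:=q-a_1$, so $1\le k\le p-b_1\le p$. By Lemma \ref{m}(b), $k=\min\{q-a_1,p-b_1\}\le\caT(\mu)$. Hence $\mu+k\beta$ occurs in the pencil, and it suffices to exhibit some $\rho_n^{(w)}\in\Omega_{p,q}$ with
\[
\Vert\mu+k\beta-\rho_n^{(w)}\Vert_\frk^2-L(\mu)\le
\Vert\eta_k+k\beta-\rho_n^{(w_k)}\Vert_\frk^2-L(\eta_k),
\]
and then invoke \eqref{weight-2}. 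Here
\[
\mu+k\beta=(q,a_2,\dots,a_p\mid b_1+k,b_2,\dots,b_q),
\]
with $b_1+k\le p$ and $a_2\le q-k$.

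\emph{Lambda side.} We have $\mu\gg\eta_k$ coordinatewise: $a_1=q-k$, and all other coordinates of $\eta_k$ vanish. By Proposition \ref{lambda-up}, $L(\mu)\ge L(\eta_k)$.

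\emph{Spin side.} The goal is to choose $\rho_n^{(w)}$ so that $\{\mu+k\beta-\rho_n^{(w)}\}$ is weakly dominated by the vector \eqref{eta-k},
\[
(\underbrace{q-1,\dots,q-1}_{k},0,\dots,0\mid p-k-1,\dots,p-k-1).
\]
The $k=p$ case uses $(q-1,\dots,q-1,1\mid 0,\dots,0)$ instead. Once this is done, \eqref{weakly-dominate} gives the spin comparison.

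The natural choice is modelled on Lemma \ref{lem2}, applied to $\mu'=\mu+k\beta$. This weight has first coordinate $q>a_2$ and $b'_1=b_1+k$. When $b_1+k\le p-1$, Lemma \ref{lem2} produces $\rho_n^{(w)}$ with $\{\mu'-\rho_n^{(w)}\}=(m\mid n)$ satisfying:
\begin{itemize}
\item[(i)] $\sum_{t\le f}m_t\le (q-1)\min\{f,b_1+k\}$;
\item[(ii)] $n_g\le p-1-b_1-k$.
\end{itemize}
This is not quite the desired weak domination. Condition (i) is a majorization bound governed by $b_1+k$ rather than $k$, so one should use Lemma \ref{lem-Phi} (majorization) in place of pointwise domination. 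This gives $\Vert\cdot\Vert_\frk^2\le\Phi_p(\underbrace{q-1,\dots,q-1}_{b_1+k},0,\dots)+\Phi_q(\underbrace{p-1-b_1-k,\dots}_{q})$.

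One then compares this with $\eta_{k'}$ for $k'=b_1+k$. We have $\mu\gg\eta_{k'}$ as long as $a_1\ge q-k'$, which holds since $k'\ge k$. So the bound reduces to $C_{k'}$ from the proof of Lemma \ref{lemma-etak}, which is at most $B_0$, or at most $B_0+3/2$ when $k'=p$ and $q=p+1$.

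The boundary case $b_1+k=p$ needs separate treatment. Here choose $\rho_n^{(w)}$ with $R_w=(p,b_2,\dots,b_q)$, as in the previous proposition, so that $k_1=q-1$. The resulting $\{\cdot\}$ has $\frk$-part bounded by $(q-1,\dots,q-1,*\mid 0,\dots)$. Use the $|a_i-k_i|$ estimates together with Lemma \ref{omegapq} to get domination by $(q-1,\dots,q-1,1\mid0,\dots,0)$, i.e., by \eqref{eta-k} at $k=p$. This yields $C_p$, which produces the extra $3/2$ exactly when $q=p+1$.

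\emph{Main obstacle.} The hard step is the spin-side domination. One must check carefully that Lemma \ref{lem2}'s bound (i), with $\min\{f,b_1+k\}$, together with $m_t\le q-1$, really majorizes as claimed after the $\{\cdot\}$ rearrangement. Absolute values may reorder the coordinates, which is why $\{\cdot\}_r$ and Lemma \ref{lem-Phi} are used rather than naive domination. The edge case $b_1+k=p$ is also delicate, since it must land exactly on $C_p$.
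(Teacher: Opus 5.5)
Your proposal is correct and, once you drop the initial comparison with $\eta_k$ and compare with $\eta_s$ for $s=b_1+(q-a_1)$ instead, it is essentially the paper's proof. For $s\le p-1$ the paper uses Lemma \ref{lem2} plus Lemma \ref{lem-Phi}; for $s=p$ it chooses $R_w=(p,b_2,\dots,b_q)$, which forces $k_1=q-1$ (in the boundary case Lemma \ref{omegapq} is unnecessary, since $0\le a_i,k_i\le q-1$ for $i\ge 2$ already give domination by $(q-1,\dots,q-1,1\mid 0,\dots,0)$); and it reduces to $C_s$ via $\mu\gg\eta_s$ and Proposition \ref{lambda-up}.
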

\begin{proof}
	Take $s=b_1+(q-a_1)$, then $s\le p$. By Lemma \ref{m}, we have $q-a_1\le \caT(\mu)$. At this point,
	\[\mu+(q-a_1)\beta=(q,a_2,\dots,a_p\mid s,b_2,\dots,b_q).\]
	In particular, $a_2<q$.
	
	Firstly, suppose $s\le p-1$. Using Lemma \ref{lem2}, there exists $\rho_n^{(w)}\in\Omega_{p,q}$ such that
\[\{\mu+(q-a_1)\beta-\rho_n^{(w)}\}=(m_1,\dots,m_p\mid n_1,\dots,n_q)\]
	satisfies
	\begin{itemize}
		\item[(a)] For any $1\le f\le p$,\ $\sum_{t=1}^f m_t\le (q-1)\min\{f,s\}$.
		\item[(b)] For any$1\le g\le q$,\ $n_g\le p-1-s$.
	\end{itemize}
	
	Recall the proof of Lemma \ref{lemma-etak}, we have
	\[\{\eta_0-\rho_n^{(w_0)}\}=(0,\dots,0\mid p-1-s,\dots,p-1-s)\]
	and
		\[\{\eta_s+s\beta-\rho_n^{(w_s)}\}=
		(\underbrace{q-1,\dots,q-1}_{s},0,\dots,0\mid p-1-s,\dots,p-1-s),  \quad 1\le s\le p-1.\]
	Hence Lemma \ref{lem-Phi} shows that
	\[\Vert \mu+(q-a_1)\beta-\rho_n^{(w)}\Vert_\frk^2\le \Vert\eta_s+s\beta-\rho_n^{(w_s)}\Vert_\frk^2.\]
	
	Now suppose $s=p$, instead take
	\[R_w=(q,b_2,\dots,b_p),\quad \rho_n^{(w)}=(k_1,\dots,k_p\mid R_w)\in\Omega_{p,q}.\]
	Then
	\[|\mu+(q-a_1)\beta-\rho_n^{(w)}|=(1,|a_2-k_2|,\dots,|a_p-k_p|\mid 0,\dots,0).\]
	Since $q>b_2$, we have $k_1=q-1$, and consequently for any $i\ge 2$, we have $|a_i-k_i|\le q-1$.
	
	On the other hand,
	\[\{\eta_p+p\beta-\rho_n^{(w_p)}\}=(q-1,\dots,q-1,1\mid 0,\dots,0).\]
	Using Lemma \ref{lem-Phi} again, we obtain
		\[\Vert \mu+(q-a_1)\beta-\rho_n^{(w)}\Vert_\frk^2\le \Vert\eta_s+s\beta-\rho_n^{(w_s)}\Vert_\frk^2.\]
	
	Finally, since $a_1=q-(q-a_1)\ge q-s$, we have $\mu\gg\eta_s$. By Proposition \ref{lambda-up}, $L(\mu)\ge L(\eta_s)$. Therefore,
		\[B(\mu)\le\Vert \mu+(q-a_1)\beta-\rho_n^{(w)}\Vert_\frk^2-L(\mu)\le \Vert\eta_s+s\beta-\rho_n^{(w_s)}\Vert_\frk^2-L(\eta_s)=C_s,\]
where $C_s$ is defined in Lemma \ref{lem1}. By the same lemma, $C_s=B_0+3/2$ only when $q=p+1$ and $s=p$, and in all other cases $C_s\le B_0$. The proposition is now proven.
\end{proof}
\begin{prop}\label{a_1>q}
Take $\mu$ as in \eqref{mu}.	Assume that	 $\mu$ is in $\caU_{p,q}$. If $a_1\ge q$, then $B(\mu)\le B_0$.
\end{prop}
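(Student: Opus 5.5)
The plan is to take $m=0$ in the definition of $B(\mu)$ and to bound $\Vert\{\mu-\rho_n^{(w)}\}+\rho_c\Vert^2-L(\mu)$ for one well-chosen $\rho_n^{(w)}\in\Omega_{p,q}$. This is legitimate because $B(\mu)\le D_w(\mu)$ for every $w\in W(\frg,\frt)^1$. The choice is $\rho_n^{(w)}:=(a^*\mid R(a))$, the unique element of $\Omega_{p,q}$ whose first $p$ coordinates are $a^*=(\min\{a_1,q\},\dots,\min\{a_p,q\})$. The hypothesis $a_1\ge q$ gives $a_1^*=q$, so $k_1=q$, which is exactly the hypothesis needed in Lemma \ref{lem-2}.

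\emph{Step 1 (truncate $a$).} By \eqref{1-l} of Lemma \ref{lem-1},
\[
B(\mu)\le D_w(a\mid b)\le \Vert (a^*\mid b)-(a^*\mid R(a))\Vert_\frk^2-L(a^*\mid b)=D_w(K_w\mid b),
\]
where $K_w=a^*$. So the $a$-part of the weight now coincides with $K_w$.

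\emph{Step 2 (convex decomposition of $b$).} Since $\mu$ is u-small, Lemma \ref{m}(a) gives $2p\ge b_1\ge\cdots\ge b_q\ge 0$. Set $b_{q+1}=0$ and define
\[
c_0=1-\frac{b_1}{2p},\qquad c_t=\frac{b_t-b_{t+1}}{2p}\quad (1\le t\le q).
\]
Then $c_t\ge 0$, $\sum_{t=0}^q c_t=1$, and $b=\sum_{t=0}^q c_t\phi_t$, where the $\phi_t=(2p,\dots,2p,0,\dots,0)$ are integral vectors with decreasing coordinates. Keeping the first part fixed at $K_w$, Proposition \ref{Jensen}(ii) (with this same fixed $w$) yields
\[
D_w(K_w\mid b)\le\sum_{t=0}^q c_t\,D_w(K_w\mid \phi_t).
\]

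\emph{Step 3 (apply the reduction lemma).} Since $k_1=q$, Lemma \ref{lem-2} gives $D_w(K_w\mid\phi_t)\le B_0$ for every $0\le t\le q$. This covers $p=1$ directly, and $p\ge 2$ through Lemma \ref{lem-p} and the subsequent lemma. Hence the convex combination in Step 2 is at most $B_0$, so $B(\mu)\le B_0$.

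The main point needing care is Step 2: one must check that Proposition \ref{Jensen}(ii) really applies with the first coordinate block equal to $K_w$. This requires that $(K_w\mid\phi_t)$ is $\Delta^+(\frk,\frt)$-dominant, which holds since $K_w$ and $\phi_t$ are both nonnegative and decreasing. One must also check that the same $w$ is used on both sides, which holds because $D_w$ is defined for a fixed $\rho_n^{(w)}$ rather than through a minimum. The heavy computation is already contained in Lemma \ref{lem-2}, so I expect no further obstacle once these compatibility checks are verified.
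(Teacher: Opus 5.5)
Your proposal is correct and matches the paper's proof: take $\rho_n^{(w)}=(a^*\mid R(a))$ so that $k_1=q$, truncate $a$ to $K_w$ via \eqref{1-l}, write $b$ as a convex combination of the $\phi_t$ using $b_1\le 2p$ from Lemma \ref{m}, and conclude with Proposition \ref{Jensen}(ii) and Lemma \ref{lem-2}. Your weights $c_t=(b_t-b_{t+1})/(2p)$ are the correct ones; the paper's $s_t=(b_t-b_{t-1})/(2p)$ has the index reversed and would be nonpositive.
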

\begin{proof}
	Set $\rho_n^{(w)}=(a^*\mid R(a)):=(K_w\mid R_w)$. Then we have
	\[B(\mu)\le D_w(a\mid b)\le D_w(K_w\mid b),\]
where the second step uses Lemma \ref{lem-1}.
	
	By Lemma \ref{m}, we have $2p\ge b_1\ge\cdots\ge b_q\ge 0$. Let
\[s_0=\dfrac{2p-b_1}{2p},\quad s_t=\dfrac{b_t-b_{t-1}}{2p}\quad (1\le t<q),\quad s_q=\dfrac{b_q}{2p}.\]
Therefore,	
	\[D_w(K_w\mid b)\le\sum_{t=0}^qs_tD_w(K_w\mid \phi_t)\le B_0,\]
where  the first step uses Proposition \ref{Jensen}, while the second step uses Lemma \ref{lem-2}.  This finishes the proof.
\end{proof}
\begin{prop}
	Take $\mu$ as in \eqref{mu}.	Assume that	 $\mu$ is in $\caU_{p,q}$. If $b_1\ge q$, then $B(\mu)\le B_0$.
\end{prop}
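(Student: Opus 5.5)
The plan is to mirror the proof of Proposition \ref{a_1>q}, exchanging the roles of the two blocks. Since $q\ge p$, the hypothesis $b_1\ge q$ gives $b_1\ge p$. Hence $b^*=(b_1^*,\dots,b_q^*)$ has $b_1^*=p$, and we take
\[
\rho_n^{(w)}=(K(b)\mid b^*)=(K_w\mid R_w).
\]
With $m=0$ in the definition of $B(\mu)$, and then \eqref{1-r} of Lemma \ref{lem-1}, we get
\[
B(\mu)\le D_w(a\mid b)\le D_w(a\mid R_w).
\]
So everything is reduced to bounding $D_w(a\mid R_w)$, where $R_w$ has first entry $r_1=p$, and consequently $k_1\le q-1$.

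Next we fix $R_w$ and apply Jensen in the $a$-variable. By Lemma \ref{m}(a), $2q\ge a_1\ge\cdots\ge a_p\ge 0$. For $0\le t\le p$ put
\[
\psi_t=(\underbrace{2q,\dots,2q}_{t},0,\dots,0),
\]
so that $a=\sum_{t=0}^p s_t\psi_t$ with
\[
s_0=\frac{2q-a_1}{2q},\qquad s_t=\frac{a_t-a_{t+1}}{2q}\ (1\le t<p),\qquad s_p=\frac{a_p}{2q},
\]
and $s_t\ge 0$, $\sum_t s_t=1$. Proposition \ref{Jensen}(i) then gives
\[
D_w(a\mid R_w)\le \sum_{t=0}^p s_t\, D_w(\psi_t\mid R_w).
\]
It therefore suffices to prove the analogue of Lemma \ref{lem-2}: for every $\rho_n^{(w)}\in\Omega_{p,q}$ with $r_1=p$ and every $0\le t\le p$,
\[
D_w(\psi_t\mid R_w)\le B_0 .
\]

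To prove this analogue, we would copy the structure of Lemmas \ref{lem-p} and \ref{lem-2} with $p$ and $q$ interchanged.

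First, a telescoping/monotonicity argument as in Lemma \ref{lem-p} writes $K_w$ as a convex combination of the extremal vectors
\[
(\underbrace{q-1,\dots,q-1}_{s},0,\dots,0),\qquad 0\le s\le p .
\]
Here the decreasing-difference estimate should again come from \eqref{L-N-M}, with the $\Phi_p$ increments now computed instead of the $\Phi_q$ ones.

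Second, as in Lemma \ref{lem-2}, we show that for each fixed $s$ the function of $t$ is first nonincreasing and then increasing. This reduces the problem to the endpoints $t=0$ and $t=p$. The endpoint $t=0$ should reduce to one of the quantities $C_k$ of Lemma \ref{lemma-etak}, or to the weights $\zeta_r$ of Lemma \ref{lem1}, which are already known to be $\le B_0$.

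The main obstacle is the endpoint $t=p$, namely the analogue of the last lemma of Section \ref{sec-RT}. It is harder here because $B_0$ is not symmetric in $p$ and $q$: the weight $(2q,\dots,2q\mid R_w)$ produces large $\Phi_p$ terms of size roughly $(2q+i)^2$. For this case I would first bound $L$ from below via \eqref{L-N-M}, using
\[
\min\{x_i,y_j\}\ge 2\min\{P_i,Q_j\}+\min\{a_i,b_j\}
\]
together with $e(\cdot)\le p$, and then compare directly with \eqref{B0}. One should then find that the deficit is a nonnegative polynomial, split according to whether $q\le 2p$ or $q>2p$, as in the proof for $F_q(\varphi_s)$. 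If that direct estimate is too weak, the fallback is to pass to $\mu+m\beta$ for a suitable $m>0$ (for instance $m=p-r$ with $\zeta_r$-type comparisons) and apply Lemma \ref{lem1} through weak dominance \eqref{weakly-dominate}.
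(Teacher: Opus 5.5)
There is a genuine gap. Your opening reduction is sound. With $\rho_n^{(w)}=(K(b)\mid b^*)$ one has $r_1=p$, hence every $k_i\le q-1$, and \eqref{1-r} gives $B(\mu)\le D_w(a\mid R_w)$. The Jensen step in $a$ is also formally valid.

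The gap is the proposed analogue of Lemma~\ref{lem-2}, namely $D_w(\psi_t\mid R_w)\le B_0$ for every $\rho_n^{(w)}$ with $r_1=p$ and every $0\le t\le p$. This is false as soon as $q>p$.
\begin{itemize}
\item Take $p=1$, $q=2$ (so $B_0=\frac92$), $\rho_n^{(w)}=(0\mid 1,1)$ and $t=1$. Then $\|\{(4\mid 1,1)-(0\mid 1,1)\}+\rho_c\|^2=\|(5\mid 2,1)\|^2=30$. On the other side, $(4\mid 1,1)+2\rho_c=(6\mid 5,3)$ is $\Delta^+(\frg,\frt)$-dominant, so $L(4\mid 1,1)=\|(3\mid 3,2)\|^2=22$. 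Hence $D_w(\psi_1\mid R_w)=8>\frac92$.
\item Likewise, for $p=2$, $q=3$ ($B_0=29$), $\rho_n^{(w)}=(0,0\mid 2,2,2)$ and $t=2$, one finds $127-90.5=36.5>29$.
\end{itemize}
The issue is not just that $B_0$ is asymmetric. Mirroring Proposition~\ref{a_1>q} replaces extreme vectors with entries $2p$ by extreme vectors with entries $2q$. Weights such as $(2q,\dots,2q\mid R_w)$ lie far outside the region where $D_w\le B_0$ holds. So the endpoint analysis you flag as the main obstacle cannot succeed. Rescuing the Jensen bound would require using the u-small inequalities to control the weights $s_t$, which your plan does not do; the remaining steps are in any case only sketched.

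The missing idea is a case split on $a_1$, which makes Jensen unnecessary here.
\begin{itemize}
\item If $a_1\ge q$, Proposition~\ref{a_1>q} already gives $B(\mu)\le B_0$, since that proposition places no condition on $b$.
\item If $a_1\le q-1$, then $a_i,k_i\in[0,q-1]$ for all $i$, so $|a_i-k_i|\le q-1$. Hence $\{(a\mid R_w)-\rho_n^{(w)}\}$, whose second block is zero, is weakly dominated by $(q-1,\dots,q-1\mid 0,\dots,0)=\{\zeta_p-\rho_n^{(w_p)}\}$. Also $(a\mid R_w)\gg\zeta_p$ because $r_1=p$. By \eqref{weakly-dominate} and Proposition~\ref{lambda-up},
\[
B(\mu)\le D_w(a\mid R_w)\le \|\zeta_p-\rho_n^{(w_p)}\|_\frk^2-L(\zeta_p).
\]
Running the computation of Lemma~\ref{lem1} at $r=p$ shows the right-hand side equals $B_0$ if $q>p$ and $B_0-\frac92$ if $q=p$.
\end{itemize}
This is essentially your fallback of weak dominance against $\zeta_r$, but with $m=0$. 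It works only after the case $a_1\ge q$ has been split off.
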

\begin{proof}
	By Proposition \ref{a_1>q}, we may assume $a_1\le q-1$. Set $\rho_n^{(w)}=(K(a)\mid b^*):=(K_w\mid R_w)$. Then Lemma \ref{lem-1} gives
	\[D_w(a\mid b)\le D_w(a\mid R_w).\]
	Since $r_1=p$, we have $k_1\le q-1$. Thus we have $|a_i-k_i|\le q-1$ for $1\le i\le p$.
	
	Recall that
		\[\zeta_p=(0,\dots,0\mid p,0,\dots,0),\quad \rho_n^{(w_p)}=(q-1,\dots,q-1\mid p,0,\dots,0).\]
	Clearly, $\{\zeta_p-\rho_n^{(w_p)}\}\gg\{(a\mid R_w)-\rho_n^{(w)}\}$. Then
	\[\Vert(a\mid R_w)-\rho_n^{(w)}\Vert_\frk^2\le \Vert \zeta_p-\rho_n^{(w_p)}\Vert_\frk^2.\]
	Moreover, $(a\mid R_w)\gg\zeta_p$. Therefore, Proposition \ref{lambda-up} implies $L(a\mid R_w)\ge L(\zeta_p)$. Then Lemma \ref{lem1} shows that
	\[D_w(a\mid R_w)=\Vert(a\mid R_w)-\rho_n^{(w)}\Vert_\frk^2-L(a\mid R_w)\le \Vert\zeta_p-\rho_n^{(w_p)}\Vert_\frk^2-L(\zeta_p)=B_0.\]
\end{proof}

Combining all the propositions above, we obtain that the Theorem \ref{thm-main} holds for all $\mu\in\caU_{p, q}$.

\section{Proof of Theorem \ref{thm-main}: the u-large case}\label{sec-u-large}
Firstly, let us prepare a few lemmas.
Let $u=(u_1,\dots,u_r)$ and $v=(v_1,\dots,v_r)$ be consisting of  nonnegative real numbers. Set
\[|u-v|=(|u_1-v_1|,\dots,|u_r-v_r|).\]
Arrange $u_1,\dots,u_r$ in decreasing order as $u_{(1)},\dots,u_{(r)}$. Define
\[\caF_r(u)=\sum_{i=1}^r(r-i+1)u_{(i)}\]
and
\[\caD_r(u,v)=\caF_r(u)+\caF_r(v)-\caF_r(|u-v|).\]

\begin{lemma}
Let $u=(u_1,\dots,u_r)$ and $v=(v_1,\dots,v_r)$ be consisting of nonnegative real numbers arranged in decreasing order. Then
	 \begin{equation}\label{D_r-1}
		D_r(u,v)=2\sum_{i=1}^r\min\{u_i,v_i\}+\sum_{i<j}(\min\{u_i-u_j,v_i-v_j\}+\min\{u_i+u_j,v_i+v_j\}).
	\end{equation}
\end{lemma}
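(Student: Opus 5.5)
The plan is to expand both sides into a sum of single-index terms and a sum of pair terms, and then match them term by term. Here $D_r$ in \eqref{D_r-1} means $\caD_r$. The first step is an order-free formula for $\caF_r$. For any vector $w=(w_1,\dots,w_r)$ of nonnegative reals with decreasing rearrangement $w_{(1)}\ge\cdots\ge w_{(r)}$, the weight $r-i+1$ on $w_{(i)}$ counts the indices $j\ge i$. Hence
\[
\caF_r(w)=\sum_{i=1}^r w_{(i)}+\sum_{i<j} w_{(i)}=\sum_{i=1}^r w_i+\sum_{i<j}\max\{w_i,w_j\}.
\]
The second expression does not depend on the ordering of the entries, so it can be applied directly to $|u-v|$, whose entries need not be decreasing. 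This is the same device used in the proof of Lemma \ref{lambda-2}.

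Next I apply this to the three vectors. Since $u$ and $v$ are already decreasing, $\max\{u_i,u_j\}=u_i$ and $\max\{v_i,v_j\}=v_i$ for $i<j$. Write $\alpha_i=u_i-v_i$. Then
\[
\caD_r(u,v)=\sum_{i=1}^r\big(u_i+v_i-|\alpha_i|\big)+\sum_{i<j}\big(u_i+v_i-\max\{|\alpha_i|,|\alpha_j|\}\big).
\]
The single-index terms match immediately, because $u_i+v_i-|u_i-v_i|=2\min\{u_i,v_i\}$.

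For the pair terms I use two elementary identities:
\[
\max\{|a|,|b|\}=\tfrac12\big(|a+b|+|a-b|\big),\qquad \min\{x,y\}=\tfrac12\big(x+y-|x-y|\big).
\]
Take $a=\alpha_i$ and $b=\alpha_j$. Note that $a-b=(u_i-u_j)-(v_i-v_j)$ and $a+b=(u_i+u_j)-(v_i+v_j)$. Then
\[
\min\{u_i-u_j,v_i-v_j\}=\tfrac12\big(u_i-u_j+v_i-v_j-|a-b|\big),
\]
\[
\min\{u_i+u_j,v_i+v_j\}=\tfrac12\big(u_i+u_j+v_i+v_j-|a+b|\big).
\]
Adding these gives $u_i+v_i-\tfrac12(|a+b|+|a-b|)=u_i+v_i-\max\{|\alpha_i|,|\alpha_j|\}$, which is exactly the pair term. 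Summing over $i<j$ yields \eqref{D_r-1}.

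The only step needing care is the order-free rewriting of $\caF_r(|u-v|)$. Once it is in place, the rest is the routine max/min algebra above, and I do not expect any real obstacle.
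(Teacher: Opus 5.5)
Your proposal is correct and follows essentially the same route as the paper: you rewrite $\caF_r$ in the order-free form $\sum_i w_i+\sum_{i<j}\max\{w_i,w_j\}$ so it applies to $|u-v|$, and then you match the pair terms using $2\min\{z_1,z_2\}=z_1+z_2-|z_1-z_2|$ and $2\max\{|z_1|,|z_2|\}=|z_1+z_2|+|z_1-z_2|$. You also derive the order-free formula from the weights $r-i+1$ and note that it does not depend on the ordering of the entries, which the paper uses without comment.
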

\begin{proof}
	Note that
	\[\caF_r(u)=\sum_{i=1}^ru_i+\sum_{i<j}\max\{u_i,u_j\}\]
	 and for real numbers $z_1,z_2$,
	 \[2\min\{z_1,z_2\}=z_1+z_2-|z_1-z_2|,\quad 2\max\{|z_1|,|z_2|\}=|z_1+z_2|+|z_1-z_2|,\]
	  we have
	  \begin{align*}
	  	\caD_r(u,v)&=2\sum_{i=1}^r\min\{u_i,v_i\}+\sum_{i<j}(u_i+v_i-\max\{|u_i-v_i|,|u_j-v_j|\})\\
	  	&=2\sum_{i=1}^r\min\{u_i,v_i\}+\dfrac{1}{2}\sum_{i<j}(2u_i+2v_i-|(u_i-u_j)-(v_i-v_j)|-|(u_i+u_j)-(v_i+v_j)|)\\
	  	&=2\sum_{i=1}^r\min\{u_i,v_i\}+\sum_{i<j}(\min\{u_i-u_j,v_i-v_j\}+\min\{u_i+u_j,v_i+v_j\}).
	  \end{align*}
\end{proof}

\begin{lemma}
		Let $u=(u_1,\dots,u_r)$ and $v=(v_1,\dots,v_r)$ be consisting of nonnegative real numbers arranged in decreasing order. Let $u^\prime=(u_2,\dots,u_p)$,\ $v^\prime=(v_2,\dots,v_q)$, then
			\begin{equation}\label{D_r-2}
			\caD_r(u,v)-\caD_{r-1}(u^\prime,v^\prime)\ge 2\min\{v_1,\sum_{i=1}^ru
			_i\}.
		\end{equation}
\end{lemma}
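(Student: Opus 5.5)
The plan is to start from the explicit formula \eqref{D_r-1} and read off the difference directly. (In the statement, $u^\prime$ and $v^\prime$ should read $(u_2,\dots,u_r)$ and $(v_2,\dots,v_r)$.) Applying \eqref{D_r-1} to $(u,v)$ and to $(u^\prime,v^\prime)$, every term of $\caD_{r-1}(u^\prime,v^\prime)$ also appears in $\caD_r(u,v)$. They cancel, leaving
\[
\caD_r(u,v)-\caD_{r-1}(u^\prime,v^\prime)=2\min\{u_1,v_1\}+\sum_{j=2}^r T_j,
\]
where
\[
T_j:=\min\{u_1-u_j,v_1-v_j\}+\min\{u_1+u_j,v_1+v_j\}.
\]
Since both sequences are nonnegative and decreasing, every quantity inside the two minima is nonnegative, so $T_j\ge 0$ for all $j$.

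Next I would use the identity $\min\{A,B\}+\min\{C,D\}=\min\{A+C,A+D,B+C,B+D\}$ to rewrite
\[
T_j=\min\{\,2u_1,\ 2v_1,\ u_1+v_1-u_j+v_j,\ u_1+v_1+u_j-v_j\,\}.
\]

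If $v_1\le u_1$, then $2\min\{u_1,v_1\}=2v_1\ge 2\min\{v_1,\sum_i u_i\}$. Together with $T_j\ge 0$ this proves the claim.

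So assume $v_1>u_1$. Then the leading term is $2u_1$, and I would bound each $T_j$ from below by comparing it with $2u_j$:
\begin{itemize}
\item $2u_1\ge 2u_j$ because $u$ is decreasing;
\item $2v_1>2u_1\ge 2u_j$;
\item $u_1+v_1+u_j-v_j\ge u_1+u_j\ge 2u_j$ because $v_1\ge v_j$;
\item $u_1+v_1-u_j+v_j\ge u_1+v_1-u_j$.
\end{itemize}
Hence $T_j\ge\min\{2u_j,\ u_1+v_1-u_j\}$.

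This gives two subcases.
\begin{itemize}
\item If $T_j\ge 2u_j$ for every $j\ge 2$, then the total is at least $2u_1+2\sum_{j\ge2}u_j=2\sum_i u_i\ge 2\min\{v_1,\sum_i u_i\}$.
\item Otherwise some $j$ has $T_j\ge u_1+v_1-u_j$ with $u_1+v_1-u_j<2u_j$. Then $u_1+v_1<3u_j\le 3u_1$, so $v_1<2u_1$. Keeping only this $T_j$ and dropping the other nonnegative terms, the total is at least $2u_1+u_1+v_1-u_j\ge 2u_1+v_1>2v_1\ge 2\min\{v_1,\sum_iu_i\}$.
\end{itemize}

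The only delicate point is this last subcase. The naive per-term bound $T_j\ge 2u_j$ fails: for $u=(2,2)$ and $v=(3,0)$ one gets $T_2=3<4$. The fix is to observe that whenever this bound fails, $v_1$ is small relative to $u_1$, so the cap $2v_1$ in the target is reached from a single term. Everything else is bookkeeping with the four-way minimum.
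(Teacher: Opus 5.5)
Your proof is correct and follows the paper's argument: the same reduction via \eqref{D_r-1} to $2\min\{u_1,v_1\}+\sum_{j\ge 2}T_j$, the same split on $u_1$ versus $v_1$, the same four candidate values of $T_j$, and the same idea that either every $T_j\ge 2u_j$ or one single term already lifts the total to $2v_1$. The only difference is the per-term bound: the paper uses $T_j\ge 2\min\{u_j,v_1-u_1\}$, which sums directly to the target, whereas you use $\min\{2u_j,u_1+v_1-u_j\}$ and then a two-subcase argument (you are also right that $u',v'$ should end at index $r$).
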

\begin{proof}
	From \eqref{D_r-1}, we have
	\[\caD_r(u,v)-\caD_{r-1}(u^\prime,v^\prime)=2\min\{u_1,v_1\}+\sum_{j=2}^r(\min\{u_1-u_j,v_1-v_j\}+\min\{u_1+u_j,v_1+v_j\}).\]
	If $u_1\ge v_1$, then we already have $\caD_r(u,v)-\caD_{r-1}(u^\prime,v^\prime)\ge 2v_1$. If $u_1<v_1$, set
$$
d_j=\min\{u_j,v_1-u_1\}
$$
Then $\min\{u_1-u_j,v_1-v_j\}+\min\{u_1+u_j,v_1+v_j\}$ is one of the following numbers:
	 \[2u_1,\quad 2v_1,\quad u_1-u_j+v_1+v_j,\quad u_1+u_j+v_1-v_j.\]
	Clearly, $2u_1$, $2v_1$, and $u_1+u_j+v_1-v_j$ are all at least $2d_j$. If $u_j\le v_1-u_1$, then
$$
u_1-u_j+v_1+v_j\geq 2u_1+v_j  \geq 2u_j.
$$
 Otherwise,
	 \[u_1-u_j+v_1+v_j>2(v_1-u_j)+v_j>2(v_1-u_1).\]
	Therefore we always have $\min\{u_1-u_j,v_1-v_j\}+\min\{u_1+u_j,v_1+v_j\}\ge 2d_j$. Hence
	 \[\caD_r(u,v)-\caD_{r-1}(u^\prime,v^\prime)\ge 2u_1+2\sum_{j=2}^rd_j\ge 2\min\{v_1,\sum_{i=1}^ru_1\}.\]
	This proves the lemma.
\end{proof}

\begin{lemma}\label{lemma-Phi-Dr}
		Let $u=(u_1,\dots,u_r)$ and $v=(v_1,\dots,v_r)$ be consisting of nonnegative real numbers arranged in decreasing order. Then
		\begin{equation}\label{Phi-Dr}
			\Phi_r(|u-v|)-\sum_{i=1}^r(u_i-v_i+r-i+1)^2=4\caF_r(v)-2\caD_r(u,v).
		\end{equation}
\end{lemma}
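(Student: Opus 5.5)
The identity should follow by expanding both sides directly, using only the definitions of $\Phi_r$, $\caF_r$ and $\caD_r$. No inequalities are needed, since the statement is an exact identity.

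First expand the left term $\Phi_r(|u-v|)$. Write $w=|u-v|$ and let $w_{(1)}\ge\cdots\ge w_{(r)}$ be its decreasing rearrangement; the entries of $w$ are already nonnegative. By \eqref{Phi},
\[
\Phi_r(w)=\sum_{i=1}^r w_{(i)}^2+2\sum_{i=1}^r (r-i+1)w_{(i)}+\sum_{i=1}^r (r-i+1)^2
=\sum_{i=1}^r (u_i-v_i)^2+2\caF_r(|u-v|)+\sum_{i=1}^r (r-i+1)^2 .
\]
Here I use that rearranging does not change the sum of squares, and that the middle sum is $\caF_r(|u-v|)$ by definition.

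Next expand the second term:
\[
\sum_{i=1}^r(u_i-v_i+r-i+1)^2=\sum_{i=1}^r(u_i-v_i)^2+2\sum_{i=1}^r(r-i+1)u_i-2\sum_{i=1}^r(r-i+1)v_i+\sum_{i=1}^r(r-i+1)^2 .
\]
Because $u$ and $v$ are already arranged in decreasing order, the two weighted sums are exactly $\caF_r(u)$ and $\caF_r(v)$. This is the only place where the ordering hypothesis is used.

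Subtracting, the squared differences and the constant $\sum_i (r-i+1)^2$ cancel, leaving
\[
2\caF_r(|u-v|)-2\caF_r(u)+2\caF_r(v).
\]
Adding and subtracting $2\caF_r(v)$ rewrites this as
\[
4\caF_r(v)-2\bigl(\caF_r(u)+\caF_r(v)-\caF_r(|u-v|)\bigr)=4\caF_r(v)-2\caD_r(u,v),
\]
which is \eqref{Phi-Dr}.

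I do not expect any real obstacle. The one point to state carefully is the identification of the weighted sums with $\caF_r$. For $\caF_r(|u-v|)$ this requires the rearrangement, since $|u-v|$ need not be sorted. For $\caF_r(u)$ and $\caF_r(v)$ it holds with no rearrangement because of the decreasing-order hypothesis.
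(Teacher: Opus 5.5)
Your proposal is correct and follows essentially the same route as the paper: expand both squares, cancel the squared differences (equal sums after rearrangement) and the constant $\sum_i (r-i+1)^2$, identify the weighted sums with $\caF_r(|u-v|)$, $\caF_r(u)$, $\caF_r(v)$, and rewrite $2\bigl(\caF_r(|u-v|)-\caF_r(u)+\caF_r(v)\bigr)$ as $4\caF_r(v)-2\caD_r(u,v)$. Your observation that the decreasing-order hypothesis is needed only to identify $\sum_i (r-i+1)u_i$ and $\sum_i (r-i+1)v_i$ with $\caF_r(u)$ and $\caF_r(v)$ is exactly right.
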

\begin{proof}
	Let $|u-v|_{(1)},\dots,|u-v|_{(r)}$ be the decreasing rearrangement of the components of $|u-v|$. From \eqref{Phi}, we have
	\begin{align*}
\Phi_r(|u-v|)-\sum_{i=1}^r(u_i-v_i+r-i+1)^2 &= \sum_{i=1}^r(|u-v|_{(i)}+r-i+1)^2 - \sum_{i=1}^r(u_i-v_i+r-i+1)^2 \\
&= 2\sum_{i=1}^r(r-i+1)\big(|u-v|_{(i)}-u_i+v_i\big)\\
&=2\big(\caF_r(|u-v|))-\caF_r(u)+\caF_r(v)\big).
\end{align*}
	On the other hand, by the definition of $\caD_r$,
	\[4\caF_r(v)-2\caD_r(u,v)=2\big(\caF_r(|u-v|))-\caF_r(u)+\caF_r(v)\big).\]
	This proves the lemma.
\end{proof}

Now we put $\mu=(a_1, \dots, a_p \mid b_1, \dots, b_q)$ as the highest weight of a $K$-type. Let $w$ be in $W(\frg,\frt)^1$ and set $\rho_n^{(w)}$ as in \eqref{rho-n-w}.  Since $\rho_n^{(w)}$ is actually the half-sum of the positive roots in $w\Delta^+(\frp,\frt)$, and noncompact roots of the form $e_i+e_{p+j}$ are necessarily positive roots, the remaining noncompact positive roots must be of the form
\[e_i-e_{p+j}\quad\mbox{or}\quad e_{p+j}-e_i,\quad 1\le i\le p,1\le j\le q.\]
If $e_i-e_{p+j}$ is a noncompact positive root, the contribution of the index pair $(i,j)$ to $\rho_n^{(w)}$ is
\[\dfrac{1}{2}\big((e_i+e_{p+j})+(e_i-e_{p+j})\big)=e_i.\]
On the other hand, if $e_{p+j}-e_i$ is a noncompact positive root, then the contribution of the index pair $(i,j)$ to $\rho_n^{(w)}$ is
\[\dfrac{1}{2}\big((e_i+e_{p+j})+(e_{p+j}-e_{i})\big)=e_{p+j}.\]
Therefore, we have
\begin{equation}\label{rho_nw}
	k_i=\mathrm{card}\{j:e_i-e_{p+j}\in w\Delta^+(\frg, \frt)\},\quad r_j=\mathrm{card}\{i:e_{p+j}-e_i\in w\Delta^+(\frg, \frt)\}.
\end{equation}
In particular, since $e_{p+l}-e_{p+j}$ $(1\le l<j\le p)$ are compact positive roots, this shows that whenever $e_{p+j}-e_i\in w\Delta^+(\frg, \frt)$, we also have $e_{p+l}-e_i\in w\Delta^+(\frg, \frt)$. Similarly, we obtain
  \begin{equation}\label{rho_nw-2}
  	\begin{split}
	\{i:e_i-e_{p+j}\in w\Delta^+(\frg, \frt)\}&=\{1,\dots,q-r_j\},\\ \{j:e_{p+j}-e_i\in w\Delta^+(\frg, \frt)\}&=\{1,\dots,q-k_i\}.
	\end{split}
\end{equation}
\begin{lemma}\label{lem-large}
Take $\mu$ as in \eqref{mu}.	Assume that  $\mu$ is in $\caU_{p,q}^c$. Write
	$\rho_n^{(w)}$ as in \eqref{rho-n-w} for $w\in W(\frg,\frt)^1$.
	If $\mu+2\rho_c\in \caC_w$, then
	\begin{equation}\label{D_r-3}
		\caD_p(a,K_w)+\caD_q(b,R_w)\ge 2pq.
	\end{equation}
\end{lemma}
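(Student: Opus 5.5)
The plan is to work with the explicit formula \eqref{D_r-1} and the peeling inequality \eqref{D_r-2}. Since $\rho_n^{(w)}\in\Omega_{p,q}$, condition (b) gives $\sum_i k_i+\sum_j r_j=pq$. So \eqref{D_r-3} is equivalent to
\[
\caD_p(a,K_w)+\caD_q(b,R_w)\ \ge\ 2\sum_{i=1}^p k_i+2\sum_{j=1}^q r_j .
\]
I want to distribute the right-hand side over the coordinates and charge each coordinate either to a "large" part of $\mu$ or to a pair term in \eqref{D_r-1}.

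\textbf{Step 1: encode u-largeness.} By Lemma \ref{usmall-1}, and as in the proof of Lemma \ref{large}, u-largeness gives some $t$ with $\langle \mu-2\rho_n^{(w)},w\xi_t\rangle>0$. Because $\mu+2\rho_c\in\caC_w$, the vector $w\xi_t$ is the indicator of the $t$ largest coordinates of $\mu+2\rho_c$. These coordinates form an initial block $a_1,\dots,a_f$ together with an initial block $b_1,\dots,b_g$, where $f+g=t$. Hence
\[
\sum_{i\le f}a_i+\sum_{j\le g}b_j\ >\ 2\Big(\sum_{i\le f}k_i+\sum_{j\le g}r_j\Big).
\]
The chamber condition, read through \eqref{rho_nw-2}, says $x_i\ge y_j$ exactly when $e_i-e_{p+j}\in w\Delta^+(\frg,\frt)$. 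Consequently $k_i$ counts the $y_j$ lying below $x_i$, and $r_j$ counts the $x_i$ lying below $y_j$. In particular, the block structure of the top $t$ coordinates is compatible with the staircase shape of $(K_w\mid R_w)$.

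\textbf{Step 2: peel off the top block.} I will apply \eqref{D_r-2} $f$ times to $(a,K_w)$ and $g$ times to $(b,R_w)$. This yields
\[
\caD_p(a,K_w)\ \ge\ 2\sum_{i\le f}\min\Big\{k_i,\sum_{s\ge i}a_s\Big\}+\caD_{p-f}(a'',K''),
\]
and similarly for $b$. Here $a'',K''$ denote the tails. On the tails I use only the crude bound from \eqref{D_r-1} that all summands are nonnegative, plus the diagonal term $2\sum\min\{u_i,v_i\}$. My aim is to show that the tail terms together with the mixed contributions cover $2\sum_{i>f}k_i+2\sum_{j>g}r_j$ minus a deficit. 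That deficit is then to be paid for by the strict surplus $\sum_{i\le f}a_i+\sum_{j\le g}b_j-2(\sum_{i\le f}k_i+\sum_{j\le g}r_j)>0$ coming from Step 1.

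\textbf{Step 3: compare the two sides.} To convert the min-terms into the needed quantities, I will split according to whether $\sum_{s\ge i}a_s\ge k_i$. Where it holds, the peeled term is already $2k_i$. Where it fails, the shortfall is at most $k_i-\sum_{s\ge i}a_s$. Lemma \ref{omegapq} then lets me trade a shortfall on the $a$-side against an excess on the $b$-side, since the $\ell^1$ discrepancies of the $K$-part and the $R$-part agree. The pair terms $\min\{u_i+u_j,v_i+v_j\}$ in \eqref{D_r-1} are what absorb the contribution of large $a_i$ into later indices.

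The main obstacle is Step 3. The surplus from Step 1 is concentrated on the top block, whereas the required $2k_i$, $2r_j$ are spread over all indices. Showing that the $\min\{u_i+u_j,v_i+v_j\}$ terms transport this surplus correctly, without losing a factor $2$, is where the argument must really use the chamber condition. If a direct argument fails, I would first try induction on $p+q$: remove the largest coordinate of $\mu+2\rho_c$ (either $a_1$ or $b_1$), check via \eqref{rho_nw-2} that the truncated $\rho_n^{(w)}$ lies in $\Omega_{p-1,q}$ or $\Omega_{p,q-1}$, and use \eqref{D_r-2} to gain exactly $2k_1$ (respectively $2r_1$) plus the change in $pq$.
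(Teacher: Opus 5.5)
There is a genuine gap. Your main route (Steps 1--3) does not reach the inequality. Step 3, where the surplus of the top block must be transported to all the required $2k_i$ and $2r_j$, is left open. The tool you propose for it also does not apply: Lemma \ref{omegapq} compares two elements of $\Omega_{p,q}$, but $(a\mid b)$ is an arbitrary u-large weight (for instance $a_1$ may exceed $2q$). So there is no $\ell^1$ identity between $a-K_w$ and $b-R_w$ with which to trade shortfalls against excesses. Your fallback induction is the right skeleton, and it is essentially how the paper argues. If $w\xi_1=e_1$, then $k_1=q$; you peel off the first coordinate with \eqref{D_r-2} and pass to $(a_2,\dots,a_p\mid b)$ and $(k_2,\dots,k_p\mid R_w)$ for the pair $(p-1,q)$. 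However, the two points that carry the argument are missing. First, \eqref{D_r-2} only gains $2\min\{q,\sum_t a_t\}$, so you must prove $\sum_t a_t\ge q$. The paper obtains this from the chamber inequalities $x_1\ge y_1\ge y_h\ge x_i$ with $h=q-k_i$, combined with a witness $a_i>2k_i$ for some $i\ge 2$. Together these force $a_1\ge 2(q-i)+1$, and a witness $b_j>2r_j$ is handled similarly.

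Second, and more seriously, u-largeness is not inherited by the truncated weight, so the induction hypothesis may be unavailable. Take $p=q=2$ and $\mu=(N,0\mid 0,0)$ with $N\ge 5$. Then $\mu+2\rho_c=(N+4,2\mid 4,2)$ lies in $\caC_w$ for $\rho_n^{(w)}=(2,1\mid 1,0)$, and $\mu$ is u-large. But the truncated weight $(0\mid 0,0)$ is u-small for $Sp(1,2)$, and with $(1\mid 1,0)$ one gets $\caD_1(0,1)+\caD_2((0,0),(1,0))=0<4$. Peeling then yields only $\caD_2(a,K_w)+\caD_2(b,R_w)\ge 4$, whereas $8$ is needed. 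The repair is to run the induction on the hereditary hypothesis furnished by Lemma \ref{large}: some $a_i>2k_i$ or some $b_j>2r_j$. This survives truncation whenever the witness is not the removed coordinate. You then treat separately the case where the only witness is $a_1>2k_1=2q$. In that case, for every $i\ge 2$ one has $a_1-a_i\ge q-k_i$ and $a_1+a_i\ge q+k_i$. Hence each pair term $\min\{a_1-a_i,q-k_i\}+\min\{a_1+a_i,q+k_i\}$ in \eqref{D_r-1} equals $2q$, and $\caD_p(a,K_w)\ge 2q+2q(p-1)=2pq$ directly; in the example, $\caD_2(a,K_w)=4+1+3=8$. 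The case $w\xi_1=e_{p+1}$ is symmetric. Since removing a $b$-coordinate can produce $p>q$, the induction must be run without the standing assumption $p\le q$.
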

\begin{proof}
	Consider the pair of nonnegative integers $(p,q)$, where we no longer require that $p\le q$. We prove the lemma by induction on $p+q$. The case $p=0$ or $q=0$ is trivial. Thus it suffices to assume that $p>0$ and $q>0$. Recalling the action of $w$ in Section \ref{sec-spin-norm}, we find that  $w\xi_1$ has only two choices: $e_1$ or $e_{p+1}$.

Firstly, let us consider the case that  $w\xi_1=e_1$. Then $e_1-e_{p+j}\in w\Delta^+(\frg, \frt)$ for all $1\le j\le q$. From \eqref{rho_nw}, we have $k_1=q$. We move on according to the following three cases:
\begin{itemize}
\item[(i)]  there exists $2\leq i\leq p$ such that $a_i>2 k_i$;
\item[(ii)] there exists $1\leq j\leq q$ such that $b_j>2 r_j$;
\item[(iii)] $a_i\leq 2 k_i$ for each $2\leq i\leq p$ and $b_j\leq 2 r_j$ for each $1\leq j\leq q$.
\end{itemize}

We \emph{claim} that
		\begin{equation}\label{sum-a}
		\sum_{t=1}^p a_t\ge q
	\end{equation}
holds in cases (i) and (ii).
	Indeed, to prove \eqref{sum-a}, it suffices to assume that $a_1<q$.

In case (i), there exists $i\ge 2$ such that $a_i>2k_i$. Set
	\[h=\mathrm{card}\{j:e_{p+j}-e_i\in w\Delta^+(\frg, \frt)\}=q-k_i>q-a_i\ge 1.\]
	Then $a_i\ge 2(q-h)+1$. From \eqref{rho_nw-2}, we have $e_{p+h}-e_i\in w\Delta^+(\frg, \frt)$. Thus
	\[b_{h}+2(q-h+1)\ge a_i+2(p-i+1).	\]
	Furthermore, since $e_1-e_{p+1}\in w\Delta^+(\frg, \frt)$, we have
		\[a_1\ge b_1+2(q-p)\ge b_{h}+2(q-p)\ge a_i+2(h-i)\ge 2(q-i)+1,\]
	where the last step uses $a_i\ge 2(q-h)+1$. Therefore,
		\[\sum_{t=1}^p a_t\ge a_1+(i-1)a_i\ge a_1+(i-1)\ge 2q-i\ge q.\]
	Thus \eqref{sum-a} holds in case (i).
	
	In case (ii), there exists $j\ge 1$ such that $b_j>2r_j$. Set
	\[l=\mathrm{card}\{i:e_i-e_{p+j}\in w\Delta^+(\frg, \frt)\}=p-r_j\ge 1.\]
	Since $e_1-e_{p+j}\in w\Delta^+(\frg, \frt)$, we have $b_j\ge 2(p-l)+1$. Since $e_1-e_{p+1}\in w\Delta^+(\frg, \frt)$, we have
	\[a_1\ge b_1+2(q-p)\ge b_j+2(q-p)\ge  2(q-l)+1,\]
	where the last step uses $b_j\ge 2(p-l)+1$. Therefore,
		\[\sum_{t=1}^p a_t\ge a_1+(l-1)a_l\ge a_1+(l-1)\ge 2q-l\ge q.\]
	Thus \eqref{sum-a} holds in case (ii).
	
By the induction hypothesis, we have
$$
\caD_{p-1}\big((a_2,\dots,a_p), (k_2,\dots,k_p)\big)+\caD_q(b,R_w)\ge 2(p-1)q.
$$
On the other hand,
by \eqref{D_r-2} and \eqref{sum-a}, we have
$$
\caD_{p}(a,K_w)-\caD_{p-1}\big((a_2,\dots,a_p),(k_2,\dots,k_p)\big)\ge 2q,
$$
Adding the two equations above gives \eqref{D_r-3}. This handles cases (i) and (ii).
	
Now let us consider case (iii). Since $\mu$ is u-large, Lemma \ref{large} implies that
$$
a_1\ge 2k_1+1=2q+1.
$$
Therefore, for each $2\le i\le p$,
	\[a_1-a_i\ge q-k_i,\quad a_1+a_i\ge q+k_i.\]
	Hence
	\[\min\{a_1-a_i,q-k_i\}+\min\{a_1+a_i,q+k_i\}=2q.\]
	By \eqref{D_r-1},
	\begin{align*}
		\caD_p(a,K_w)&\ge 2\min\{a_1, k_1\}+\sum_{i=2}^p(\min\{a_1-a_i, q-k_i\}+\min\{a_1+a_i, q+k_i\})\\
		&=2q+2q(p-1)=2pq.
	\end{align*}
	This handles case (iii) and finishes the situation that $w \xi_1=e_1$.
	
	Now one sees that the situation $w\xi_1=e_{p+1}$ is similar and we omit the details.
\end{proof}

\begin{prop}
	Take $\mu$ as in \eqref{mu}.	Assume that  $\mu$ is in $\caU_{p, q}^c$ and that $\mu+2\rho_c\in\caC_w$. Then $B(\mu)\le B_0$.
\end{prop}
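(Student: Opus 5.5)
We take $m=0$ in the definition of $B(\mu)$ and the same $w$ for which $\mu+2\rho_c\in\caC_w$. Then
\[
B(\mu)\le \Vert\mu-\rho_n^{(w)}\Vert_\frk^2-L(\mu)=D_w(\mu).
\]
It therefore suffices to show $D_w(\mu)\le B_0$. The plan is to compute both terms explicitly in terms of $\caF$ and $\caD$, and then to feed in Lemma \ref{lem-large}.

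\textbf{Step 1: the $\frk$-norm term.} Write $\mu+2\rho_c=(x\mid y)$ with $x_i=a_i+2P_i$ and $y_j=b_j+2Q_j$. By \eqref{k-norm},
\[
\Vert\mu-\rho_n^{(w)}\Vert_\frk^2=\Phi_p(|a-K_w|)+\Phi_q(|b-R_w|).
\]
Lemma \ref{lemma-Phi-Dr} converts each $\Phi$ into a diagonal square sum plus a correction:
\[
\Phi_p(|a-K_w|)=\sum_i(a_i-k_i+P_i)^2+4\caF_p(K_w)-2\caD_p(a,K_w),
\]
and similarly for $b$. Now Lemma \ref{lem-large} gives $\caD_p(a,K_w)+\caD_q(b,R_w)\ge 2pq$, so
\[
\Vert\mu-\rho_n^{(w)}\Vert_\frk^2\le \sum_i(a_i-k_i+P_i)^2+\sum_j(b_j-r_j+Q_j)^2+4\caF_p(K_w)+4\caF_q(R_w)-4pq.
\]

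\textbf{Step 2: the lambda-norm term.} By \eqref{L-0},
\[
L(\mu)=\Vert\mu+2\rho_c-\rho^{(w)}\Vert^2-e(\mu)/2.
\]
Because $\rho^{(w)}=\rho_c+\rho_n^{(w)}$, the vector $\mu+2\rho_c-\rho^{(w)}$ has coordinates $a_i-k_i+P_i$ and $b_j-r_j+Q_j$. These are exactly the diagonal squares from Step 1, so they cancel, and
\[
D_w(\mu)\le 4\caF_p(K_w)+4\caF_q(R_w)-4pq+\tfrac{e(\mu)}{2}.
\]

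\textbf{Step 3: bounding the remainder.} The quantity $\caF_p(K_w)+\caF_q(R_w)=\langle\rho_n^{(w)},\rho_c\rangle$ depends only on $w$. We first maximize it over $\Omega_{p,q}$. Using the Young-diagram description (condition (c) of $\Omega_{p,q}$), it can be written as a sum over the boxes of the diagram. We expect the maximum to be attained at an extreme element such as $(q-1,\dots,q-1\mid p,0,\dots,0)$ or $(q,\dots,q\mid 0,\dots,0)$. We then compare $4\max\langle\rho_n^{(w)},\rho_c\rangle-4pq+\tfrac{e(\mu)}{2}$ with $B_0$, using $e(\mu)\le\min\{p,q\}=p$ (each value occurs at most twice in $\mu+2\rho_c$). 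Evaluating at the extreme element, we expect something like
\[
4\Bigl(\tfrac{p(p+1)q}{2}\Bigr)\text{-type terms}+\dots\le \tfrac{4p^3+12pq(q-1)-p}{6}.
\]

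\textbf{Main obstacle.} This final comparison is the step we expect to be hardest. If the crude maximum over $w$ is too large, we will instead exploit the coupling between $\mu$ and $w$ imposed by $\mu+2\rho_c\in\caC_w$. First, we would strengthen Lemma \ref{lem-large} to a bound $\caD_p+\caD_q\ge 2pq+\frac{1}{4}e(\mu)$. Second, we would split off a factor involving $\caF(K_w)$ by using \eqref{D_r-2} more finely in the induction. The aim is to absorb the $\caF(\rho_n^{(w)})$ terms into a multiple of $\Vert\rho\Vert^2$-type sums matching \eqref{B0}.
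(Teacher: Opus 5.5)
Your Steps 1 and 2 are exactly the paper's argument. Combining \eqref{L-0} with \eqref{Phi-Dr} gives the identity \eqref{large-2}. Lemma \ref{lem-large} together with $e(\mu)\le p$ then reduces everything to an upper bound for $\caF_p(K_w)+\caF_q(R_w)=\langle\rho_n^{(w)},\rho_c\rangle$.

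The gap is Step 3, which is where the inequality is actually proved. You leave it as an expectation, and the expectation is wrong. The maximum over $\Omega_{p,q}$ is not attained at $(q,\dots,q\mid 0,\dots,0)$ or $(q-1,\dots,q-1\mid p,0,\dots,0)$. For instance, $(q,\dots,q\mid 0,\dots,0)$ gives only $pq(p+1)/2$, which is short of the true maximum by at least $p(p^2-1)/6$. So evaluating at your candidates would not give an upper bound for $D_w(\mu)$.

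The missing step is the pairwise count you were about to make with the Young diagram. Since $K_w$ and $R_w$ are decreasing, \eqref{rho_nw} gives two counts:
\begin{itemize}
\item $\caF_p(K_w)=\sum_iP_ik_i$ adds $P_i$ once for each pair $(i,j)$ with $e_i-e_{p+j}\in w\Delta^+(\frg,\frt)$;
\item $\caF_q(R_w)=\sum_jQ_jr_j$ adds $Q_j$ once for each pair with $e_{p+j}-e_i\in w\Delta^+(\frg,\frt)$.
\end{itemize}
Each pair is of exactly one type, hence
\[
\caF_p(K_w)+\caF_q(R_w)\le\sum_{i=1}^p\sum_{j=1}^q\max\{P_i,Q_j\}=\frac{pq(q+1)}{2}+\frac{p(p^2-1)}{6}.
\]
Equality holds, for example, at $(p-1,p-2,\dots,0\mid p,\dots,p,p-1,\dots,1)$ with $q-p+1$ entries equal to $p$. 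This is the element used for $\texttt{triv}$ in Section 3. Substituting gives
\[
D_w(\mu)\le 2pq(q-1)+\frac{2}{3}p(p^2-1)+\frac{p}{2}=B_0-4\delta_{p,q}.
\]

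When $p\ne q$ this lands exactly on $B_0$, so there is no slack. Given Lemma \ref{lem-large} and $e(\mu)\le p$, any weaker bound on the $\caF$-terms would overshoot $B_0$. On the other hand, your proposed fallbacks (strengthening Lemma \ref{lem-large} by a multiple of $e(\mu)$, or refining \eqref{D_r-2}) are unnecessary. The precise value $\sum_{i,j}\max\{P_i,Q_j\}$ is what closes the proof.
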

\begin{proof}
	Put $P_i=p-i+1$ and $Q_j=q-j+1$. 	Write  $\rho_n^{(w)}$ as in \eqref{rho-n-w}. Note that
	\[\mu+2\rho_c-\rho^{(w)}=(a_1-k_1+P_1,\dots,a_p-k_p+P_p\mid b_1-r_1+Q_1,\dots,b_q-r_q+Q_q),\]
Therefore,
	\begin{align*}
L(\mu) &= \sum_{i=1}^p(a_i-k_i+p-i+1)^2+\sum_{j=1}^q(b_j-r_j+q-j+1)^2-\dfrac{e(\mu)}{2}\\
&= \Phi_p(|a-K_w|) + 2\caD_p(a,K_w) - 4 \caF_p(K_w)\\
&+ \Phi_q(|b-R_w|)+ 2\caD_q(b,R_w) - 4\caF_q(R_w)-\dfrac{e(\mu)}{2},
\end{align*}
where the first step uses \eqref{L-0}, while the second step uses \eqref{Phi-Dr}.
Note that
$$
\|\{\mu-\rho_n^{(w)}\} +\rho_c\|^2= \Phi_p(|a-K_w|) + \Phi_q(|b-R_w|)
$$
Thus
\begin{equation}\label{large-2}
		D_w(\mu)=4\caF_p(K_w)+4\caF_q(R_w)-2\caD_p(a,K_w)-2\caD_q(b,R_w)+\dfrac{e(\mu)}{2}.
	\end{equation}
	For each pair $(i, j)$, if $e_i-e_{p+j}\in w\Delta^+(\frg,\frt)$, then it contributes $1$ to $k_i$, and hence contributes $P_i$ to $\caF_p(K_w)+\caF_q(R_w)$. Similarly, if $e_{p+j}-e_i\in w\Delta^+(\frg,\frt)$, then it contributes $Q_j$ to $\caF_p(K_w)+\caF_q(R_w)$. Therefore,
	\begin{align}\label{large-3}
		\caF_p(K_w)+\caF_q(R_w)&\le \sum_{i=1}^p\sum_{j=1}^q\max\{P_i,Q_j\}\nonumber\\
		&=\sum_{i=1}^p\left(\dfrac{q(q+1)}{2}+\dfrac{P_i(P_i-1)}{2}\right)\nonumber\\
        &=\sum_{i=1}^p\left(\dfrac{q(q+1)}{2}+\dfrac{i(i-1)}{2}\right)\nonumber\\
		&=\dfrac{pq(q+1)}{2}+\dfrac{p(p^2-1)}{6}.
	\end{align}
Thus
	\[B(\mu)\le D_w(\mu)\le 2pq(q-1)+\frac{2}{3} p(p^2-1)+\frac{p}{2}=B_0-4\delta_p^q\le B_0,\]
where the second step use \eqref{D_r-3}, \eqref{large-2} and \eqref{large-3} and the estimation that $e(\mu)\le p$; while the third step uses the definition of $B_0$ in \eqref{B0}.
\end{proof}

This finishes the proof of Theorem \ref{thm-main} for $\mu\in\caU_{p, q}^c$.

\medskip



\begin{thebibliography}{99}
	
\bibitem{ALTV} J.~Adams, M.~van Leeuwen, P.~Trapa, D.~Vogan, \emph{Unitary representations of real reductive groups}, Ast{\'erisque} \textbf{417} (2020).	

	
\bibitem{DD16} J.~Ding, C.-P.~Dong, \emph{Spin norm, $K$-types and tempered representations}, J.  Lie Theory
\textbf{26} (2016), 651--658.
	
\bibitem{D13} C.-P.~Dong,
	\emph{On the Dirac cohomology of complex Lie group representations},
	Transform. Groups \textbf{18} (2013), 61--79. Erratum: Transform.
	Groups \textbf{18} (2013), 595--597.
	
\bibitem{D23}
C.-P.~Dong, \emph{On the Helgason-Johnson bound}, Israel J. Math \textbf{254} (2023), 373--397.
	

\bibitem{DY} C.-P.~Dong, Z.~Ying,
	\emph{Distribution of spin norm along pencils: the $Sp(p, q)$ case},
	preprint 2026, arXiv:2605.06015.
	

	
\bibitem{HJ} S.~Helgason, K.~Johnson, \emph{The bounded spherical functions on symmetric spaces}, Adv. Math.
	\textbf{3} (1969), 586--593.

\bibitem{HP} J.-S.~Huang, P.~Pand\v zi\'c, \emph{Dirac
cohomology, unitary representations and a proof of a conjecture of
Vogan}, J. Amer. Math. Soc.  \textbf{15} (2002), 185--202.



\bibitem{Pa72} R.~Parthasarathy, \emph{Dirac operators and the discrete
series}, Ann. of Math. \textbf{96} (1972), 1--30.

\bibitem{Pa80} R.~Parthasarathy,
\emph{Criteria for the unitarizability of some highest weight
modules}, Proc. Indian Acad. Sci. \textbf{89} (1980), 1--24.

\bibitem{PRV} K.~R.~Parthasarathy, R.~Ranga Rao,
V.~S.~Varadarajan, \emph{Representations of complex semi-simple Lie
groups and Lie algebras}, Ann. of Math. \textbf{85} (1967),
383--429.
	
\bibitem{SV} S.~Salamanca-Riba, D.~Vogan, \emph{On the classification of unitary representations of reductive Lie groups}, Ann. of Math. \textbf{148} (1998), 1067--1133.
	
	
\bibitem {V80} D.~Vogan,
	\emph{Singular unitary representations},  Noncommutative harmonic
	analysis and Lie groups (Marseille, 1980),  506--535.
	
\bibitem {V81} D.~Vogan,
	\emph{Representations of real reductive Lie groups}, Progress in Mathematics, Vol.~\textbf{15}, Birkh\"auser, Boston, 1981.
	
	
\bibitem {V97} D.~Vogan, \emph{Dirac operators and unitary
		representations}, 3 talks at MIT Lie groups seminar, Fall 1997.

\bibitem{V20-1} D.~Vogan,
\emph{Dirac inequality and computing the unitary dual}, \texttt{atlas} seminar, September 22, 2020.
See \url{http://math.mit.edu/~dav/atlassem/}

\bibitem{V20-2} D.~Vogan,
\emph{Finding the sharpest Dirac inequality}, \texttt{atlas} seminar, September 29, 2020.
See \url{http://math.mit.edu/~dav/atlassem/}

\bibitem{W24} K.~D.~Wong, \emph{On some conjectures of the unitary dual of $U(p, q)$}, Adv. Math.
\textbf{442} (2024), Paper No.~109584, 38 pages.

\bibitem{At} Atlas of Lie Groups and Representations, version 1.1.1, May 2026. See \url{www.liegroups.org}
 for more about the software.
	
\end{thebibliography}
\end{document}